\newtheorem{theorem}{Theorem}[section]
\newtheorem{lemma}[theorem]{Lemma}
\newtheorem{proposition}[theorem]{Proposition}
\newtheorem{conjecture}[theorem]{Conjecture}
\newtheorem{problem}[theorem]{Problem}
\newtheorem*{remark}{Remark}
\DeclareMathOperator{\In}{In}
\DeclareMathOperator{\SRG}{SRG}
\DeclareMathOperator{\rank}{rank}
\DeclareMathOperator{\mult}{mult}
\title{A new conjecture on the inertia of graphs}
\author{Saieed Akbari, Clive Elphick, Hitesh Kumar, Shivaramakrishna Pragada, Quanyu Tang}
\date{}
\begin{document}
\maketitle
\begin{abstract}
Let $G$ be a graph with adjacency matrix $A(G)$. We conjecture that 
\[2n^+(G) \le n^-(G)(n^-(G) + 1),\]
where $n^+(G)$ and $n^-(G)$ denote the number of positive and negative eigenvalues of $A(G)$, respectively. This conjecture generalizes to all graphs the well-known \emph{absolute bound} for strongly regular graphs. The conjecture also relates to a question posed by Torga\v{s}ev. We prove the conjecture for special graph families, including line graphs and planar graphs, and provide examples where the conjecture is exact. We also conjecture that for any connected graph $G$, its line graph $L(G)$ satisfies $n^+(L(G)) \le n^-(L(G)) + 1$, and obtain partial results. 
\end{abstract}

\noindent
\textbf{Keywords:} strongly regular graphs, absolute bound, inertia, reduced graphs, signature, line graphs

\noindent
\textbf{MSC:} 05C50, 05C76, 05E30 

\section{Introduction}

We use standard graph notation and terminology. Throughout the paper, let \( G = (V(G), E(G)) \) be a simple graph of \emph{order} \( n(G) \), \emph{size} \( m(G) \), and \emph{adjacency matrix} \( A(G) \). We denote the eigenvalues of $A(G)$ (also called the \emph{spectrum} of $G$)  by $\lambda_1 \ge \cdots \ge \lambda_n$. Suppose eigenvalue $\lambda_i$ appears with multiplicity $m_i$. Then often we will write the spectrum of $G$ as $(\lambda_1^{(m_1)}, \ldots, \lambda_k^{(m_k)})$, where $\lambda_1>\lambda_2>\cdots > \lambda_k$ are the distinct eigenvalues of $A(G)$. Let $n^+(G), n^0(G)$, and $n^-(G)$ denote the number of positive, zero, and negative eigenvalues of $A(G)$, counted with multiplicities, respectively. The \emph{inertia} of $G$, denoted by $\In(G)$, is the ordered triple $(n^+(G), n^0(G), n^-(G))$. The \emph{rank} of graph $G$ is given by $\rank(G)=n^+(G) + n^-(G)$. The \emph{signature} of a graph $G$ is given by $s(G)=n^+(G) - n^-(G)$. Two vertices $u,v\in V(G)$ are said to be \emph{twins} if they have the same neighbours, i.e., $N(u)=N(v)$. A graph is called \emph{reduced} if $G$ has no twins (or \emph{twin-free}) and has no isolated vertices. For $A\subseteq V(G)$, $G[A]$ denotes the subgraph of $G$ induced by the vertices in $A$, and $G-A$ denotes the subgraph obtained by deleting all vertices in $A$ and all edges incident to a vertex in $A$. When $A = \{u\}$ (resp. $\{u,v\}$), we simply write $G-u$ (resp. $G-u-v$) to denote $G-\{u\}$ (resp. $G-\{u,v\}$). 

The following is a well-known result of Delsarte, Goethals, and Seidel \cite{Seidel_1977} (as discussed in \cite[Section 10]{Brouwer_Haemers_book}), proved using the theory of equiangular lines. 

\begin{theorem}[\cite{Seidel_1977} cf. \cite{Seidel_1979, Brouwer_Haemers_book}]\label{thm:DGS_1977}
    Let $G$ be a regular graph of order $n$ with least eigenvalue $\lambda$. If $\lambda < -1$ and multiplicity of $\lambda$ is $n-k$ for some $k\ge 1$, then $n\le \frac{k(k+1)}{2}-1$.
\end{theorem}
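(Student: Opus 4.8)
The plan is to translate the spectral hypothesis into a geometric configuration of vectors and then run a dimension count in the space of symmetric matrices, following the equiangular-lines philosophy. First I would set $t = -\lambda > 1$ and consider the matrix $M = A - \lambda I$. Since $\lambda$ is the least eigenvalue, $M$ is positive semidefinite, and because $\lambda$ has multiplicity $n-k$ its rank is exactly $k$. Writing $M = U^{\top}U$ with $U \in \mathbb{R}^{k \times n}$ and letting $u_1, \dots, u_n \in \mathbb{R}^k$ be its columns, I obtain vectors with $\langle u_i, u_i\rangle = t$ for every $i$ and $\langle u_i, u_j\rangle = A_{ij} \in \{0,1\}$ for $i \ne j$.

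Next I would form the rank-one symmetric matrices $B_i = u_i u_i^{\top}$, which lie in the $\binom{k+1}{2}$-dimensional space of symmetric $k \times k$ matrices, and prove that they are linearly independent. To this end I compute their Gram matrix $N$ under the trace inner product: since $\langle B_i, B_j\rangle = \operatorname{tr}(B_i B_j) = \langle u_i, u_j\rangle^2$, the matrix $N$ has diagonal entries $t^2 = \lambda^2$ and off-diagonal entries $A_{ij}^2 = A_{ij}$, so $N = A + \lambda^2 I$. Its eigenvalues are $\lambda_i + \lambda^2$, and since $\lambda < -1$ one checks $\lambda(1+\lambda) > 0$, hence $\lambda_i + \lambda^2 \ge \lambda + \lambda^2 > 0$ for all $i$. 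Thus $N$ is positive definite, the $B_i$ are linearly independent, and this gives the preliminary bound $n \le \binom{k+1}{2}$.

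The remaining and hardest step is to gain the extra $-1$, and this is exactly where regularity enters. If $G$ is $r$-regular, then $\mathbf{1}$ is an eigenvector of $A$ with eigenvalue $r$, so setting $w = U\mathbf{1} = \sum_i u_i$ the identity $U^{\top}w = M\mathbf{1} = (r-\lambda)\mathbf{1}$ shows $\langle u_i, w\rangle = r - \lambda =: c$ for every $i$; in other words all the $u_i$ lie on a common affine hyperplane, with $c = r - \lambda > 0$. I would then exhibit a single nonzero symmetric matrix orthogonal to every $B_i$, namely $S = c^2 I - t\, w w^{\top}$, for which $\langle B_i, S\rangle = c^2 \langle u_i, u_i\rangle - t\langle u_i, w\rangle^2 = c^2 t - t c^2 = 0$. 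For $k \ge 2$ the matrix $S$ is nonzero, since $c^2 I$ has rank $k \ge 2$ while $t\, w w^{\top}$ has rank at most one. Hence the $B_i$ lie in a hyperplane of dimension $\binom{k+1}{2} - 1$, and combining this with their linear independence yields $n \le \binom{k+1}{2} - 1$. The case $k = 1$ is vacuous, as a regular graph with only two distinct eigenvalues is a disjoint union of equal-sized complete graphs and has least eigenvalue $-1$, contradicting $\lambda < -1$.

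I expect the main obstacle to be precisely this final dimension-saving step: constructing $S$ and verifying it is nonzero uses regularity in an essential way, through the constancy of $c = r - \lambda$, whereas every earlier step relies only on $\lambda$ being the least eigenvalue. The verification that the rank-one matrices form a linearly independent family is routine once the Gram matrix is identified as $A + \lambda^2 I$, so the creative content of the argument is concentrated in locating the hyperplane carrying the configuration.
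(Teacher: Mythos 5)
The paper states this result as a cited theorem of Delsarte, Goethals and Seidel and gives no proof of its own, so there is nothing internal to compare against; your argument is the standard one from the cited sources (PSD decomposition of $A-\lambda I$, linear independence of the rank-one matrices $u_iu_i^{\top}$ via the positive definite Gram matrix $A+\lambda^2 I$, and the extra dimension saved by the hyperplane condition $\langle u_i,w\rangle = r-\lambda$ coming from regularity). I checked each step, including the treatment of $k=1$, and the proof is correct and complete.
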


Theorem \ref{thm:DGS_1977} was later generalized by Bell and Rowlinson \cite{Bell_Rowlinson_2003} using the technique of star complements.

\begin{theorem}[\cite{Bell_Rowlinson_2003}]\label{thm:Bell_Rowlinson_generalization}
Let $\lambda$ be an eigenvalue of a graph $G$ with multiplicity $n-k$ for some positive integer $k$. Then either $\lambda\in \{0, -1\}$ or $n\le \frac{k(k+1)}{2}$. Furthermore, if $G$ is regular, then  $n\le \frac{(k-1)(k+2)}{2}$.
\end{theorem}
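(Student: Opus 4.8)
The plan is to prove the statement via the theory of \emph{star complements}, the tool Bell and Rowlinson use to lift the equiangular-lines argument behind Theorem~\ref{thm:DGS_1977} from regular graphs to arbitrary graphs. Assume $\mu := \lambda \notin \{0,-1\}$ (otherwise there is nothing to prove) and write $m = n-k$ for its multiplicity. A standard fact is that one may select a \emph{star set} $X \subseteq V(G)$ with $|X| = m$ whose orthogonal projections onto the $\mu$-eigenspace $\mathcal{E}$ form a basis of $\mathcal{E}$; the \emph{star complement} $H = G - X$ then has exactly $k$ vertices and does not have $\mu$ as an eigenvalue. First I would record the block form of the adjacency matrix relative to the partition $(X,H)$,
\[
A(G) = \begin{pmatrix} A_X & B^{\mathsf T} \\ B & C \end{pmatrix},
\]
and derive the reconstruction identity $B^{\mathsf T} M B = \mu I - A_X$, where $M = (\mu I - C)^{-1}$ is well defined exactly because $\mu$ is not an eigenvalue of $C$. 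Writing $b_u \in \{0,1\}^{k}$ for the column of $B$ recording the neighbours of $u \in X$ inside $H$, and $[u\sim v]$ for the $0/1$ indicator of adjacency, the entries of this identity read $b_u^{\mathsf T} M b_u = \mu$ and $b_u^{\mathsf T} M b_v = -[u\sim v]$ for $u \ne v$.

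The core of the argument is then a purely linear-algebraic bound on $|X|$. These relations display the vectors $b_u$ as a \emph{two-distance configuration} for the (generally indefinite) symmetric form $\langle x, y\rangle_M := x^{\mathsf T} M y$: their $M$-norms $\langle b_u, b_u\rangle_M = \mu$ are constant, and the $M$-inner products $\langle b_u, b_v\rangle_M \in \{0,-1\}$ take only two values. The hypothesis $\mu \notin \{0,-1\}$ enters first through distinctness: if $b_u = b_v$ for some $u \ne v$, then $\langle b_u, b_v\rangle_M = \langle b_u, b_u\rangle_M = \mu$ would force $\mu = -[u\sim v] \in \{0,-1\}$, a contradiction. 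To convert distinctness into the quadratic bound I would pass to the symmetric tensors $Y_u = b_u b_u^{\mathsf T}$ and exploit the identity $\operatorname{tr}(M Y_u M Y_v) = \langle b_u, b_v\rangle_M^{2}$, so that the Gram matrix of the $Y_u$ under this pairing is $\mu^2 I + A_X$. Since each $b_u$ is a $0/1$ vector we have $\operatorname{diag}(Y_u) = b_u$, so the genuinely new content of $Y_u$ lives modulo diagonal matrices, in a space of dimension $\binom{k}{2}$; establishing that the classes of the $Y_u$ remain linearly independent there yields $m = |X| \le \binom{k}{2}$ and hence $n = m + k \le \binom{k+1}{2}$.

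The main obstacle is precisely this last independence step. Because $M$ is indefinite, the usual positive-semidefinite Gram argument for two-distance sets is unavailable, so one cannot simply invoke nonnegativity; instead the two integer values $\{0,-1\}$ and the invertibility of $M$ must be used to show that a relation $\sum_u c_u b_u b_u^{\mathsf T} = \Delta$ with $\Delta$ diagonal forces every $c_u = 0$. I expect the delicate point to be accounting for the diagonal correctly, since mere linear independence of the $Y_u$ only gives the weaker $n \le \binom{k+1}{2} + k$; the sharpening to $\binom{k+1}{2}$ hinges on the $0/1$ structure that ties $\operatorname{diag}(Y_u)$ back to $b_u$. Finally, for the regular case I would observe that when $G$ is $r$-regular with $\mu \ne r$, the all-ones vector lies in $\mathcal{E}^{\perp}$, and, writing $P$ for the orthogonal projection onto $\mathcal{E}$, the vectors $q_i := (I-P)e_i$ satisfy $\langle q_i, \mathbf{1}\rangle = 1$ for every $i$; this common value places all the relevant tensors in a single affine hyperplane and contributes one further linear constraint, shaving the bound by one to give $n \le \binom{k+1}{2} - 1 = \frac{(k-1)(k+2)}{2}$.
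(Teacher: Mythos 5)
The paper does not prove this statement --- it is quoted verbatim from Bell and Rowlinson \cite{Bell_Rowlinson_2003} --- so there is no in-paper argument to compare against; your proposal has to stand on its own. Your setup is the right one (star set $X$ of size $m=n-k$, star complement $H$ on $k$ vertices, the reconstruction identity $B^{\mathsf T}(\mu I-C)^{-1}B=\mu I-A_X$, and the resulting relations $\langle b_u,b_u\rangle_M=\mu$, $\langle b_u,b_v\rangle_M\in\{0,-1\}$), and the reduction of the target bound to the claim ``$m\le\binom{k}{2}$'' is correct arithmetic. But the proposal stops exactly where the theorem lives: you state that one must show that $\sum_u c_u\,b_ub_u^{\mathsf T}=\Delta$ with $\Delta$ diagonal forces $c=0$, call it ``the main obstacle,'' and say you ``expect'' the $0/1$ structure to resolve it, without giving the argument. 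The relations $\langle b_u,b_v\rangle_M\in\{\mu\}\cup\{0,-1\}$ alone do not yield this: testing the putative relation against the vectors $Mb_v$ produces the identity $c_v\,\mu(\mu+1)=\sum_j\delta_j\,(Mb_v)_j\bigl((Mb_v)_j+1\bigr)$ with $\delta=\sum_u c_ub_u$, and the quantities $(Mb_v)_j$ are not controlled by anything you have established. So the decisive step is missing, not merely deferred.

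Worse, the independence claim as you formulate it is false without further hypotheses or a careful choice of star set. If some $u\in X$ has exactly one neighbour in the star complement, then $b_u$ is a standard basis vector, $Y_u=b_ub_u^{\mathsf T}$ is itself diagonal, and its class in $\mathrm{Sym}_k/\mathrm{Diag}_k$ is zero, so the set of classes cannot be linearly independent. This genuinely occurs: for $P_3$ with $\mu=\sqrt2$ and star set an end vertex one gets $b_u=e_i$ (the conclusion $m\le\binom{k}{2}$ still holds there, but not by your argument); and $K_2$ with $\mu=1$, $k=1$ even violates the stated bound $n\le\frac{k(k+1)}{2}$, which shows the quoted statement itself needs $k\ge2$ and that any correct proof must confront exactly this degenerate configuration rather than assume it away. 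Finally, the regular case is only gestured at: you switch midsentence from the star-complement vectors $b_u\in\mathbb{R}^k$ to the projections $q_i=(I-P)e_i\in\mathbb{R}^n$, and the assertion that the condition $\langle q_i,\mathbf 1\rangle=1$ ``contributes one further linear constraint'' on the tensors $Y_u$ living in the other space is never substantiated. In short, this is a plausible plan in the correct framework, but it is not a proof, and its central lemma needs to be both repaired and established.
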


An important corollary of Theorem \ref{thm:DGS_1977} is the following well-known \emph{absolute bounds} for (primitive) strongly regular graphs (SRGs). These bounds are effectively used to rule out otherwise feasible SRG parameters.
 
\begin{theorem}[Absolute bound for SRGs, cf. \cite{Seidel_1979}]\label{thm:Seidel_absolute_bound}
Let $G = \SRG(n, d, \lambda, \mu)$ denote a strongly regular graph with spectrum $(d^{(1)}, r^{(f)}, s^{(g)})$, where $d>r>0$ and $s<-1$. Then 
   \begin{enumerate}[$(i)$]
       \item $2n \le f(f + 3)$;
       \item $2n \le g(g + 3).$
   \end{enumerate}
\end{theorem}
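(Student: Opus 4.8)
The plan is to derive the absolute bounds for SRGs as a corollary of Theorem~\ref{thm:DGS_1977}, which is exactly how the excerpt frames this result (``An important corollary''). I would apply the theorem twice, once to $G$ and once to its complement $\overline{G}$, since the two bounds $(i)$ and $(ii)$ are symmetric under complementation.

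First I would recall the spectral structure of a strongly regular graph. A connected SRG $G = \SRG(n,d,\lambda,\mu)$ is a regular graph of degree $d$ with exactly three distinct eigenvalues: the degree $d$ with multiplicity $1$, and two other eigenvalues $r > 0$ and $s < 0$ with multiplicities $f$ and $g$ respectively. Since the graph is primitive (the hypotheses $d>r>0$ and $s<-1$ ensure $G$ and $\overline{G}$ are both connected and genuinely strongly regular), the least eigenvalue $s$ satisfies $s < -1$. To prove $(ii)$, I would set $\lambda = s$ in Theorem~\ref{thm:DGS_1977}. The multiplicity of $s$ is $g$, so in the notation of that theorem $n - k = g$, i.e.\ $k = n - g = f + 1$ (using $1 + f + g = n$). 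The theorem then gives
\[
n \le \frac{k(k+1)}{2} - 1 = \frac{(f+1)(f+2)}{2} - 1.
\]
Expanding the right-hand side yields $\frac{f^2 + 3f + 2}{2} - 1 = \frac{f^2 + 3f}{2} = \frac{f(f+3)}{2}$, which rearranges to $2n \le f(f+3)$, giving bound $(i)$. (I have to be careful with which multiplicity plays the role of $n-k$; the computation shows that feeding in the eigenvalue $s$ of multiplicity $g$ produces the bound involving $f$, since $k$ is governed by the \emph{other} multiplicity.)

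To obtain the companion bound, I would pass to the complement. The complement $\overline{G}$ of a primitive SRG is again a primitive SRG whose least eigenvalue is $-1-r$, with multiplicity $f$ (the roles of $f$ and $g$ swap). Since $r > 0$ we have $-1 - r < -1$, so Theorem~\ref{thm:DGS_1977} applies to $\overline{G}$ as well. Running the identical computation with $f$ and $g$ interchanged delivers $2n \le g(g+3)$, which is $(ii)$. Alternatively, and perhaps more transparently, I could apply the theorem directly to $G$ a second time but argue via $\overline{G}$ only to justify that $g$ likewise arises as a ``$k-1$''-type quantity; the symmetry of the SRG eigenvalue structure under complementation is the clean way to see both bounds follow from one theorem.

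The main obstacle, such as it is, is not analytic but bookkeeping: correctly matching the multiplicities $f$ and $g$ to the quantity $n-k$ in Theorem~\ref{thm:DGS_1977} and verifying the hypothesis $\lambda < -1$ in each application. The hypothesis is where primitivity genuinely matters: one must confirm that both $s < -1$ and $-1 - r < -1$ hold, which is guaranteed precisely by the stated conditions $s < -1$ and $r > 0$. Once the indices are tracked carefully, both inequalities fall out of a short algebraic simplification, so I expect the proof to be essentially a two-line invocation of the theorem plus a complementation argument.
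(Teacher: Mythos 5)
Your derivation is correct and matches the paper's own framing exactly: the paper presents this theorem precisely as a corollary of Theorem~\ref{thm:DGS_1977} (without writing out the details), and your application of that theorem to $G$ and to $\overline{G}$, with $k = f+1$ and $k = g+1$ respectively, is the standard argument with the multiplicity bookkeeping and the primitivity hypotheses ($r>0$, $s<-1$) handled correctly. Nothing to add.
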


Neumaier \cite{Neumaier_1981} (cf. \cite[Theorem 11.4.3]{Brouwer_Haemers_book}) generalized the above result to association schemes. 

Note that $n= n^+ + n^-$ for (primitive) SRGs, and thus Theorem \ref{thm:Seidel_absolute_bound}$(ii)$ is equivalent to $2n^+ \le n^-(n^- + 1)$. We believe that this bound is valid for all graphs. 

\begin{conjecture}\label{conj:inertia_main}
For any graph $G$, we have
\[
 2n^+(G) \le n^-(G)(n^-(G) + 1).
\]
\end{conjecture}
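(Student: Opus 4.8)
The plan is to reduce to a clean linear-algebraic statement and then attempt the symmetric-matrix embedding that underlies Theorem~\ref{thm:DGS_1977}. First I would record that the inequality is invariant under the obvious reductions. If $u,v$ are twins then $N(u)=N(v)$ forces $u\not\sim v$, so columns $u$ and $v$ of $A(G)$ coincide, $e_u-e_v\in\ker A(G)$, and deleting $v$ removes only a zero eigenvalue; deleting an isolated vertex does the same. Hence $n^+$ and $n^-$ are unchanged on passing to the reduced graph, and it suffices to treat reduced $G$. Since $n^-(n^-+1)/2=\binom{n^-+1}{2}$ is the dimension of the space of real symmetric $n^-\times n^-$ matrices, the conjecture is precisely $n^+(G)\le\binom{n^-+1}{2}$; this matches the arithmetic of Theorem~\ref{thm:DGS_1977} and suggests proving it by injecting the $n^+$ positive eigendirections into that symmetric-matrix space.

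Next I would try to generalize the mechanism behind Theorem~\ref{thm:Seidel_absolute_bound}. For a strongly regular graph that bound is proved by representing the vertices as unit vectors in the $n^-$-dimensional eigenspace of the least eigenvalue: these vectors form a spherical two-distance set, so the degree-two functions $F_v(y)=(\langle y,x_v\rangle-\alpha)(\langle y,x_v\rangle-\beta)$ are linearly independent and live in a space of dimension $n^-(n^-+3)/2$, giving $n\le n^-(n^-+3)/2$; since $n=n^++n^-$ this rearranges to $n^+\le\binom{n^-+1}{2}$. The analogue I would aim for in general is a family of $n^+$ linearly independent elements of the symmetric-matrix space attached to the negative eigenspace. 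Let $U^+,U^-,U^0$ be the eigenspaces with orthogonal projectors $\Pi^+,\Pi^-,\Pi^0$, and let $\circ$ be the Hadamard product. A natural candidate is the Hadamard square $\Pi^-\circ\Pi^-$: writing $\Pi^-=\sum_k v_kv_k^\top$ on an orthonormal basis of $U^-$, its columns are spanned by the $\binom{n^-+1}{2}$ vectors $v_k\circ v_l$, so $\operatorname{rank}(\Pi^-\circ\Pi^-)\le\binom{n^-+1}{2}$, and the whole conjecture would follow from $\operatorname{rank}(\Pi^-\circ\Pi^-)\ge n^+$, ideally in the sharp form $U^+\subseteq\mathrm{col}(\Pi^-\circ\Pi^-)$, which holds in the small cases $K_2$, $2K_2$, $K_3$.

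The hard part is exactly this last inequality. Dually, $U^+\subseteq\mathrm{col}(\Pi^-\circ\Pi^-)$ fails only if some nonzero $w\in U^+$ satisfies $\Pi^-\operatorname{diag}(w)\Pi^-=0$, i.e. multiplication by $w$ maps $U^-$ into $(U^-)^\perp$; ruling this out is automatic when $G$ sits inside an association scheme, whose Bose--Mesner algebra is Hadamard-closed and supplies the two-distance geometry of Theorem~\ref{thm:DGS_1977}, but there is no such algebra for a general graph. I expect the bare projector $\Pi^-$ to be too crude---it discards the eigenvalue weights and the adjacency pattern that the two-distance proof exploits---so that the genuine argument must use a weighted, bounded-degree refinement, and controlling the dimension of that refined space without a two-distance hypothesis is the central obstacle. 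It is reassuring that the easy directions are genuinely easy: for bipartite $G$ one has $n^+=n^-$, whence the conjecture collapses to $n^-\ge1$, so the real difficulty lives entirely among non-bipartite graphs with $n^+\gg n^-$.

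For the families named in the abstract I would bypass the general embedding and use extra structure. For a line graph $L(G)$ the identity $A(L(G))+2I=B^\top B$, with $B$ the vertex--edge incidence matrix of $G$, pins the least eigenvalue at $\ge-2$; combining Cauchy interlacing with $\operatorname{rank}(B)=n(G)-(\text{number of bipartite components of }G)$ lets one count the eigenvalues of $B^\top B$ exceeding $2$ against those in $[0,2)$, and I would expect this to deliver even the sharper conjectured $n^+(L(G))\le n^-(L(G))+1$. For planar, and more generally bounded-genus, graphs I would feed the Euler sparsity bound $m(G)\le 3n(G)-6$ into eigenvalue interlacing to bound $n^+$ directly in terms of $n^-$. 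In all cases I expect the decisive step to be the projector inequality $\operatorname{rank}(\Pi^-\circ\Pi^-)\ge n^+(G)$, or its weighted refinement: it would settle the full conjecture at once and explains the absolute bound, but in the absence of a Hadamard-closed algebra it is precisely where one must fall back on the structural hypotheses available for line graphs and planar graphs.
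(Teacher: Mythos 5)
The statement you are addressing is Conjecture~\ref{conj:inertia_main}; the paper does not prove it, and neither does your proposal. Your reductions are sound: twins and isolated vertices contribute only zero eigenvalues, so one may assume $G$ is reduced; the bipartite case is trivial since $n^+=n^-$; and the rewriting $n^+\le\binom{n^-+1}{2}$ together with the observation $\rank(\Pi^-\circ\Pi^-)\le\binom{n^-+1}{2}$ is a correct and genuinely interesting reformulation of the mechanism behind the absolute bound. But the whole argument then funnels into the single inequality $\rank(\Pi^-\circ\Pi^-)\ge n^+(G)$, for which you offer no proof and which is at least as strong as the conjecture itself; outside an association scheme there is no Hadamard-closed algebra to force it, as you yourself acknowledge. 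Two smaller points: your dual criterion is misstated --- the containment $U^+\subseteq\mathrm{col}(\Pi^-\circ\Pi^-)$ fails precisely when some $z$ with $\Pi^-\mathrm{diag}(z)\Pi^-=0$ has a nonzero component in $U^+$, not only when such a $z$ itself lies in $U^+$; and that containment is in any case stronger than the rank inequality you actually need.

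Your treatment of the special families is also more optimistic than what can currently be carried out. For line graphs, the counting of signless Laplacian eigenvalues against the threshold $2$ that you hope will yield $n^+(L(G))\le n^-(L(G))+1$ is exactly the paper's Conjecture~\ref{conj:line_graph}, which remains open; the paper only obtains $n^+(L(G))\le 3n^-(L(G))$ from an energy argument (Lemma~\ref{lemma:inertia_least_eigenvalue}, exploiting $\lambda_{\min}(L(G))\ge-2$) and then settles the critical case $n^-(L(G))=4$ by deleting the closed neighbourhood of a low-degree edge and invoking Torga\v{s}ev's classification of reduced graphs with at most three negative eigenvalues. For planar graphs, ``Euler sparsity plus interlacing'' is not enough as stated: the paper instead combines the Four Color Theorem with the bound $1+\max\{n^+/n^-,\,n^-/n^+\}\le\chi(G)$ to reduce to $n^-(G)=4$, and again finishes with a neighbourhood-deletion argument at a vertex of degree at most five together with Torga\v{s}ev's tables. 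So the concrete gap is twofold: the central rank inequality is unproven (and is essentially the conjecture in disguise), and the fallback arguments you sketch for line graphs and planar graphs do not go through in the form you describe.
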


The above conjecture, if true, would generalize the absolute bound for SRGs to all graphs. The purpose of this note is to bring the above conjecture to the notice of researchers and provide evidence for its validity. The paper is organized as follows. In Section \ref{section:Torgasev_problem}, we provide more reasons why Conjecture \ref{conj:inertia_main} is interesting. In Section \ref{section:equality_case}, we address the equality case for Conjecture \ref{conj:inertia_main}, and mention some weaker bounds. In Section \ref{section:special_families}, we verify the conjecture for various classes of graphs, including planar graphs and line graphs. We conclude with some remarks in Section \ref{section:conclusion}.

\section{Torga\v{s}ev's problem}
\label{section:Torgasev_problem}

The problem of upper-bounding the order of a graph in terms of some graph invariant involving inertia has received attention in the literature. Note that by adding isolated vertices or twins to a graph, one can increase its order without changing its (positive and negative) inertia (see Lemma \ref{lemma:twin_inertia}). So the problem is meaningful only for reduced graphs.

Motivated by their interest in the connection between the chromatic number and the rank of a graph, the problem of bounding the order of a graph in terms of the rank was first studied by Kotlov and Lov\'{a}sz \cite{Kotlov_Lovasz_1996}. They proved that there is a constant $c>0$ such that any reduced graph with rank $r$ has order at most $c2^{r/2}$. Akbari, Cameron, and Khosrovshahi \cite{Akbari_Cameron_Khosrovshahi} later proposed the following.

\begin{conjecture}[\cite{Akbari_Cameron_Khosrovshahi}]\label{conj:rank_reduced_graphs_order}
For $r \ge 2$, the order of any reduced graph of rank $r$ is at most $m(r)$, where
\[m(r)=
\begin{cases}
    2^{(r+2)/2} -2 \quad \quad \, \text{ if }r \text{ is even},\\
    5\cdot2^{(r-3)/2} - 2 \quad \text{ if }r \text{ is odd}.
\end{cases}
\]
\end{conjecture}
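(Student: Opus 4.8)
The plan is to sharpen the Kotlov--Lov\'asz bound of order $2^{r/2}$ into the exact estimate by an induction on the rank that decreases $r$ in steps of $2$. Write $f(r)$ for the maximum order of a reduced graph of rank $r$. The target is the recursion
\[
f(r) \le 2f(r-2) + 2, \qquad f(2) = 2, \quad f(3) = 3,
\]
which solves to exactly $m(r)$: setting $g(k)=f(2k)+2$ gives $g(k)\le 2g(k-1)$, so $f(2k)\le 2^{k-1}(f(2)+2)-2 = 2^{k+1}-2 = m(2k)$, and the same manipulation in the odd case gives $f(2k+1)\le 2^{k-1}(f(3)+2)-2 = 5\cdot 2^{k-1}-2 = m(2k+1)$. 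Thus the whole problem reduces to (i) classifying reduced graphs of ranks $2$ and $3$ to pin the base cases, and (ii) proving the recursion.

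The engine for the recursion is vertex deletion followed by twin reduction. Fix a reduced graph $G$ of rank $r$ and delete a vertex $v$. Two vertices distinguished in $G$ only by their adjacency to $v$ become twins in $G-v$; since $G$ is twin-free and adjacency to $v$ is a single bit, each twin class of $G-v$ has at most two vertices. Similarly any vertex isolated in $G-v$ had $v$ as its unique neighbour in $G$, and all such vertices would be twins in $G$, so there is at most one. Reducing $G-v$ to a twin-free, isolated-free graph $H$ therefore yields
\[
n(G) \le 2\,n(H) + 2,
\]
and, because merging twins and deleting isolated vertices preserves rank, $\rank(H) = \rank(G-v)$. Hence if $v$ can be chosen so that $\rank(G-v) = r-2$, then $n(H)\le f(r-2)$ and the recursion follows at once.

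The hard part is precisely this rank-control step, and it is genuinely delicate. By Cauchy interlacing a single deletion lowers each of $n^+(G)$ and $n^-(G)$ by at most one, so a rank drop of $2$ means annihilating one positive and one negative eigenvalue simultaneously, which can fail at every vertex: in $C_5$, of rank $5$, each deletion yields $P_4$ of rank $4$. One cannot evade this by deleting a pair of vertices, since distinguishing twin classes then costs two bits, the blow-up factor becomes $4$ instead of $2$, and the exponent degrades from $r/2$ back to $r$. The induction is thus essentially forced to delete a single vertex dropping the rank by exactly $2$, and, writing $A(G)=\begin{pmatrix}0 & b^{T}\\ b & C\end{pmatrix}$ with $v$ first, this happens iff $b\notin\operatorname{Col}(C)$, i.e.\ the column of $v$ is not spanned by the remaining columns. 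The resolution I would pursue is a dichotomy: either such a vertex exists and the recursion applies, or no vertex does---forcing the rigid condition that the column of every vertex lies in the span of the others (as in $C_5$)---and this rigidity should itself be exploited to bound $n$ directly, perhaps via the support of a kernel vector of $A(G)$ or a maximal nonsingular principal submatrix. I expect essentially all of the difficulty to reside in establishing this dichotomy quantitatively, which is consistent with the exact constant having resisted proof since the order of magnitude was settled. Finally, to confirm that the constant $2$ in the recursion cannot be improved, I would match it against explicit extremal constructions of order $m(r)$, which simultaneously certifies sharpness and pinpoints which vertex the inductive step should delete.
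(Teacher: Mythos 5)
The statement you are addressing is not a theorem of this paper: it is an open conjecture due to Akbari, Cameron and Khosrovshahi, reproduced here only for context. The paper gives no proof and the strongest results it cites are partial — a counterexample, if one exists, must have rank at most $46$, and every reduced graph of rank $r$ has at most $8m(r)+14$ vertices (Ghorbani, Mohammadian and Tayfeh-Rezaie). So there is no proof in the paper to compare against, and your proposal has to be judged as an attempt at an open problem.

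Judged that way, it is an honest strategy sketch rather than a proof, and the gap is the one you yourself flag. The parts you do carry out are correct: the twin-class count giving $n(G)\le 2n(H)+2$ after deleting a vertex and re-reducing, the invariance of rank under twin-merging and isolated-vertex deletion, the characterization of a rank drop of exactly $2$ as $b\notin\operatorname{Col}(C)$, and the solution of the recursion $f(r)\le 2f(r-2)+2$ with base cases $f(2)=2$, $f(3)=3$, which does reproduce $m(r)$. But the entire content of the conjecture sits in the step you leave open: showing that every reduced graph of rank $r\ge 4$ either contains a vertex whose deletion drops the rank by $2$, or, failing that, satisfies some rigidity (every column in the span of the others) strong enough to bound its order by $m(r)$ directly. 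Your own example $C_5$ shows the first alternative can fail at every vertex, and the second alternative is announced as a direction (``perhaps via the support of a kernel vector \dots'') rather than argued. As it stands the dichotomy is a research program, not a lemma, so the induction does not close; this is consistent with the fact that the statement remains a conjecture.
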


In \cite{Akbari_Cameron_Khosrovshahi}, the authors also provide constructions of graphs for which the bounds in the above conjecture are tight. Ghorbani, Mohammadian, and Tayfeh-Rezaie \cite{Ghorbani_Mohammadian_Behruz_2015} proved that if Conjecture~\ref{conj:rank_reduced_graphs_order} is false, then a counterexample must exist with rank at most $46$. They also showed that every reduced graph of rank $r$ has at most $8m(r) + 14$ vertices.

In the 1980s, Torga\v{s}ev \cite{Torgasev_1985} considered the problem of upper-bounding the order of reduced graphs in terms of $n^-$, and proved a somewhat surprising result. 

\begin{theorem}[Torga\v{s}ev's Theorem \cite{Torgasev_1985}] For any fixed integer $k$, there are finitely many reduced graphs with $n^- = k$.
\end{theorem}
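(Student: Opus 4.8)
The plan is to bound the order of an arbitrary reduced graph $G$ with $n^-(G)=k$ by a function of $k$; since there are only finitely many graphs of each order, this yields the theorem. By the Kotlov--Lov\'asz bound a reduced graph of rank $r$ has at most $c\,2^{r/2}$ vertices, and $\rank(G)=n^+(G)+n^-(G)=n^+(G)+k$, so it suffices to bound $n^+(G)$ in terms of $k$. Thus the whole problem reduces to establishing a (possibly very weak) ``absolute bound'' $n^+(G)\le g\bigl(n^-(G)\bigr)$ valid for every graph --- exactly the qualitative content of Conjecture \ref{conj:inertia_main}.

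A first, easy consequence of having $k$ negative eigenvalues is a bound on the clique number: since $K_{k+2}$ has $-1$ as an eigenvalue of multiplicity $k+1$, Cauchy interlacing applied to a largest clique of $G$ gives $n^-(G)\ge \omega(G)-1$, whence $\omega(G)\le k+1$. The same interlacing shows that ``$n^-\le k$'' is hereditary, so $G$ is locally sparse; the difficulty is that interlacing with independent sets yields no comparable control on $n^+(G)$.

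To control $n^+(G)$ I would set up the indefinite Gram representation of $A=A(G)$. Write $A=A^+-A^-$, where $A^+,A^-\succeq 0$ are the positive and negative spectral parts and $\rank(A^-)=k$. Factoring $A^-=Y^\top Y$ and $A^+=Z^\top Z$ produces vectors $y_v\in\mathbb{R}^{k}$ and $z_v\in\mathbb{R}^{\,n^+}$ with $\|z_v\|^2=\|y_v\|^2$ and $\langle z_u,z_v\rangle-\langle y_u,y_v\rangle=A_{uv}$ for $u\ne v$. A short Cauchy--Schwarz argument then shows that reducedness forces the $y_v$ to be pairwise distinct: if $y_u=y_v$ then along an edge one gets $\langle z_u,z_v\rangle=1+\|y_u\|^2>\|z_u\|\,\|z_v\|$, which is impossible, while along a non-edge equality in Cauchy--Schwarz forces $z_u=z_v$ and hence $N(u)=N(v)$, contradicting twin-freeness. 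The same computation gives the quantitative separation $\|y_u\|\,\|y_v\|-\langle y_u,y_v\rangle\ge 1$ for every edge $uv$. The task is then to bound the number of such vectors, all living in the $k$-dimensional space $\mathbb{R}^k$.

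The main obstacle is precisely this last step. The natural route, mirroring the equiangular-lines proofs of Theorem \ref{thm:DGS_1977} and Theorem \ref{thm:Seidel_absolute_bound}, is to attach to each vertex a symmetric tensor (for instance built from $y_v y_v^\top$, or from the pair $(z_v,y_v)$) and argue that these tensors are linearly independent in a space of dimension $O(k^2)$, which would give $n^+\le\binom{k+1}{2}$ and establish even Conjecture \ref{conj:inertia_main}. The reason the sharp statement remains open is that $G$ is neither regular nor strongly regular, so the orthogonality relations underpinning the classical argument are unavailable; in particular the edge-separation above says nothing about non-adjacent pairs, so the vectors $y_v$ arising from a large independent set are distinct but not separated. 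For the mere finiteness asserted here one does not need the optimal $\binom{k+1}{2}$: any crude packing or Ramsey-type bound on the number of pairwise-distinct $y_v$ compatible with the edge-separation and with twin-freeness suffices, and I expect the decisive work to lie exactly in ruling out arbitrarily large ``independent-set clusters'' of the $y_v$.
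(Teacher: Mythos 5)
Your proposal is not a complete proof: it reduces the theorem to the statement ``$n^+(G)\le g(n^-(G))$ for some function $g$'' and then, by your own admission, does not establish that statement. The reduction itself is sound --- the Kotlov--Lov\'asz bound $n\le c\,2^{\rank(G)/2}$ for reduced graphs together with $\rank(G)=n^+(G)+n^-(G)$ does leave only the task of bounding $n^+$ in terms of $n^-$ --- and your Gram-vector computations are correct as far as they go: the $y_v\in\mathbb{R}^k$ are pairwise distinct in a reduced graph, and every edge $uv$ forces $\|y_u\|\,\|y_v\|-\langle y_u,y_v\rangle\ge 1$. But these facts are genuinely insufficient. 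Pairwise-distinct vectors in $\mathbb{R}^k$ can be arbitrarily numerous, and the edge-separation condition is vacuous on independent sets, so nothing you have written rules out a reduced graph with $n^-=k$ and enormous $n^+$ (equivalently, an enormous cloud of $y_v$'s concentrated near a single point, pairwise non-adjacent). Ruling that out is the entire content of the theorem, and the tensor/equiangular-lines route you sketch is exactly the part you concede you cannot carry out. An argument that ends by identifying ``the decisive work'' as still to be done is a research plan, not a proof.

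For context: the paper does not prove this theorem either --- it is quoted from Torga\v{s}ev's 1985 paper --- so there is no internal proof to compare against. However, the paper does record, as Theorem \ref{thm:coarse_bound}, precisely the missing ingredient: Akbari, Cameron and Khosrovshahi's bound $n^+(G)\le R(n^-(G)+2,\,2^{n^-(G)})-n^-(G)-1$. Your observation that $\omega(G)\le k+1$ (via interlacing with $K_{k+2}$) is the clique half of that Ramsey argument; the other half, which you are missing, is a reason why a sufficiently large independent set in a graph of bounded rank forces twins (roughly, rows of a rank-$r$ matrix are determined by their restriction to $r$ columns, so at most $2^{r}$ distinct neighbourhoods exist among the vertices of an independent set). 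If you want to complete your route, that is the lemma to prove; combined with your reduction it yields $n^+\le R(k+2,2^k)$ up to additive terms and hence the finiteness claim. Note also that Torga\v{s}ev's original proof predates Kotlov--Lov\'asz and proceeds differently (via the structure of graphs with few negative eigenvalues), so even a completed version of your argument would be a different proof from the one cited.
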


Recently, the above theorem was generalized in \cite{Basso_2021} and \cite{Mohammadian_2022}. 
The analogous statement for $n^+$ is false (complete graphs are easy counterexamples). Torga\v{s}ev \cite{Torgasev_two_neg_1985, Torgasev_three_neg_1989} also determined the maximum order $n(k)$ and maximum positive inertia $n^+(k)$ of reduced graphs with exactly $k$ negative eigenvalues for small values of $k$; see Table~\ref{table:Torgasev}.

 \begin{table}[H]
    \centering
    \begin{tabular}{|c|c|c|}
    \hline
       $n^- = k$  & $n(k)$ & $n^+(k)$\\
    \hline   
       1  & 2 & 1 \\
       2 & 6 & 3\\
       3 & 14 & 6\\
    \hline   
    \end{tabular}
    \caption{Maximum order $n(k)$ and maximum positive inertia $n^+(k)$ of graphs with $n^- = k$.}
    \label{table:Torgasev}
\end{table} 

\begin{remark}
    The only reduced graph with $n^-=1$ and $n^+=1$ is $K_2$. The only reduced graph with $n^-=2$ and $n^+=3$ is the cycle $C_5$. The graphs $H_1$ and $H_2$ given in Figure~\ref{fig:H1_H2} are the only reduced graphs with $n^-=3$ and $n^+=6$. Note that $H_2$ is obtained from $H_1$ by joining a pair of antipodal points on the outer 6-cycle.
\end{remark}

\begin{figure}[!h]
\centering

\begin{subfigure}[b]{0.49\textwidth}
\centering
\begin{tikzpicture}[scale=0.4, main_node/.style={circle,draw,minimum size=0.1em,inner sep=3pt}]
\node[main_node] (0) at (1.4428571428571422, 4.857142857142856) {};
\node[main_node] (1) at (3.5428571428571427, 1.2198361612482114) {};
\node[main_node] (2) at (1.4428571428571433, -2.4174705346464274) {};
\node[main_node] (3) at (-2.7571428571428536, -2.41747053464643) {};
\node[main_node] (4) at (-4.857142857142858, 1.2198361612482147) {};
\node[main_node] (5) at (-2.7571428571428536, 4.857142857142858) {};
\node[main_node] (6) at (-2.7527866908482137, 2.428493998162276) {};
\node[main_node] (7) at (-0.6813581194196421, -1.2000769944437515) {};
\node[main_node] (8) at (1.4614990234375005, 2.428493998162276) {};

 \path[draw, thick]
(1) edge node {} (0) 
(1) edge node {} (2) 
(2) edge node {} (3) 
(3) edge node {} (4) 
(4) edge node {} (5) 
(5) edge node {} (0) 
(7) edge node {} (4) 
(6) edge node {} (7) 
(7) edge node {} (8) 
(7) edge node {} (1) 
(6) edge node {} (3) 
(6) edge node {} (0) 
(6) edge node {} (8) 
(8) edge node {} (5) 
(8) edge node {} (2) 
;
\end{tikzpicture}
\caption{\( H_1 \).}
\end{subfigure}
\hfill
\begin{subfigure}[b]{0.49\textwidth}
\centering
\begin{tikzpicture}[scale=0.4, main_node/.style={circle,draw,minimum size=0.1em,inner sep=3pt}]
\node[main_node] (0) at (1.4428571428571422, 4.857142857142856) {};
\node[main_node] (1) at (3.5428571428571427, 1.2198361612482114) {};
\node[main_node] (2) at (1.4428571428571433, -2.4174705346464274) {};
\node[main_node] (3) at (-2.7571428571428536, -2.41747053464643) {};
\node[main_node] (4) at (-4.857142857142858, 1.2198361612482147) {};
\node[main_node] (5) at (-2.7571428571428536, 4.857142857142858) {};
\node[main_node] (6) at (-2.7527866908482137, 2.428493998162276) {};
\node[main_node] (7) at (-0.6813581194196421, -1.2000769944437515) {};
\node[main_node] (8) at (1.4614990234375005, 2.428493998162276) {};

 \path[draw, thick]
(1) edge node {} (0) 
(1) edge node {} (2) 
(2) edge node {} (3) 
(3) edge node {} (4) 
(4) edge node {} (5) 
(5) edge node {} (0) 
(7) edge node {} (4) 
(6) edge node {} (7) 
(7) edge node {} (8) 
(7) edge node {} (1) 
(6) edge node {} (3) 
(6) edge node {} (0) 
(6) edge node {} (8) 
(8) edge node {} (5) 
(8) edge node {} (2) 
(0) edge node {} (3)
;
\end{tikzpicture}
\caption{\( H_2 \).}
\end{subfigure}
\caption{The graphs \( H_1 \) and \( H_2 \), both satisfying \( n^- = 3 \) and \( n^+ = 6 \).}
\label{fig:H1_H2}
\end{figure}
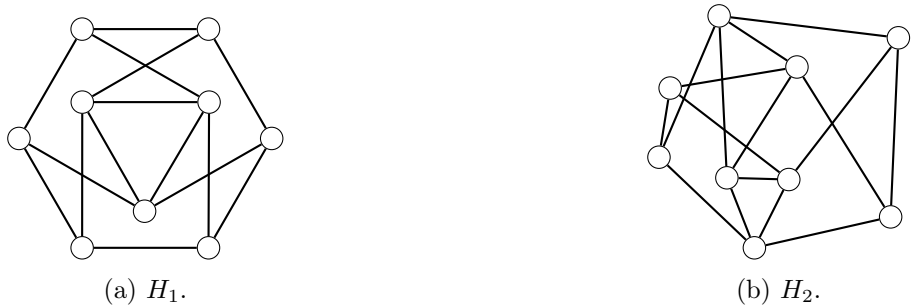

Motivated by the values of $n(k)$, Mohammadian \cite{Mohammadian_2022} proposed the following.

\begin{conjecture}[\cite{Mohammadian_2022}]
    For every non-negative integer $k$, the order of a reduced graph with exactly $k$ negative eigenvalues is at most $2^{k+1}-2$. 
\end{conjecture}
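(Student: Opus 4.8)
The plan is to prove the bound by induction on $k=n^-(G)$, aiming for the recursion $n(k)\le 2\,n(k-1)+2$, which solves to $n(k)\le 2^{k+1}-2$ with base case $n(0)=0$: a reduced graph with no negative eigenvalues has $A(G)\succeq 0$ with zero diagonal, hence $A(G)=0$, so every vertex is isolated and $V(G)=\emptyset$.

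The engine of the induction is a twin-doubling count. Fix any vertex $v$ and set $G'=G-v$. Let $G''$ be the reduced graph obtained from $G'$ by deleting isolated vertices and keeping one representative from each twin class; since deleting isolated vertices and twins leaves $n^+$ and $n^-$ unchanged (as noted in the text before Conjecture~\ref{conj:rank_reduced_graphs_order}), we have $n^-(G'')=n^-(G')$. The key point is that, because $G$ is itself twin-free, every twin class of $G'$ has at most two vertices: if $N_{G'}(u)=N_{G'}(w)$ then $N_G(u)$ and $N_G(w)$ can differ only in whether they contain $v$, so at most one vertex of the class is adjacent to $v$ in $G$ and at most one is not; otherwise two of them would be twins in $G$. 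For the same reason $G'$ has at most one isolated vertex. Hence $n(G)=1+|V(G')|\le 1+\bigl(2|V(G'')|+1\bigr)=2\,|V(G'')|+2$, and this holds for every choice of $v$.

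To close the induction I must choose $v$ so that $n^-(G'')=n^-(G')=k-1$; then the inductive hypothesis gives $|V(G'')|\le 2^{k}-2$ and the displayed inequality yields $n(G)\le 2^{k+1}-2$. By Cauchy interlacing, deleting a single vertex changes $n^-$ by at most one, so it suffices to produce one vertex whose deletion strictly lowers the negative inertia.

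The main obstacle is precisely the existence of such a vertex, and it can genuinely fail: for $G=C_5$, which has $n^-=2$, every vertex deletion yields $P_4$, again with $n^-=2$, so no single deletion lowers the negative inertia. Thus the naive induction stalls on the \emph{$n^-$-critical} graphs, those with $n^-(G-v)=n^-(G)$ for all $v$. I would attack this in two ways. First, translate criticality into the language of the span $W$ of the negative eigenvectors (of dimension $k$): one has $n^-(G-v)=k$ exactly when some $k$-dimensional negative-definite subspace lies inside $\{x:x_v=0\}$, so criticality asserts this for every $v$ simultaneously, and I would try to show that this spreads the negative part so thinly across the vertices that the order is forced to be small on its own. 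Second, failing a clean description of critical graphs, I would replace single-vertex deletion by deleting a minimal set $S$ with $n^-(G-S)=k-1$ and iterate the twin-doubling count to bound $|V(G)|$ against $|V((G-S)'')|$, at the cost of a weaker constant that would then have to be sharpened back down to $2^{k+1}-2$. Guaranteeing a drop in $n^-$ while keeping the doubling tight is where I expect essentially all the difficulty to reside, and it is presumably why the statement remains a conjecture.
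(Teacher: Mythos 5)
This statement is presented in the paper as an open conjecture of Mohammadian; the paper contains no proof of it, so there is no argument of the authors to compare yours against. Your attempt must therefore stand on its own, and it does not: it has a genuine gap, one that you yourself identify.

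The inductive skeleton is sound as far as it goes. The base case $k=0$ is correct (a positive semidefinite adjacency matrix with zero trace is the zero matrix), and the twin-doubling count is correct: since $G$ is twin-free, any twin class of $G-v$ has at most two members and $G-v$ has at most one isolated vertex, so $n(G)\le 2\,|V(G'')|+2$, which is exactly the recursion matched by the Kotlov--Lov\'{a}sz construction that makes the bound tight. But the entire content of the conjecture is concentrated in the step you leave open: producing a vertex $v$ with $n^-(G-v)=k-1$. As your own example shows, no such vertex need exist ($C_5$ has $n^-=2$ and every vertex-deleted subgraph is $P_4$, also with $n^-=2$), so the induction genuinely stalls on $n^-$-critical graphs rather than merely requiring a more careful choice of $v$. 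The two repair strategies you sketch are programmatic, not arguments: the ``spread the negative eigenspace thinly'' idea is not quantified, and deleting a minimal set $S$ with $n^-(G-S)=k-1$ loses the factor-two recursion (each deleted vertex can at best be recovered at the cost of an extra doubling, which overshoots $2^{k+1}-2$ badly once $|S|>1$), and you give no mechanism for sharpening it back. So what you have is a correct reduction of the conjecture to the critical case together with an honest admission that the critical case is untouched; that is a reasonable research plan, but it is not a proof, and the statement should remain labelled as a conjecture.
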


As pointed out in \cite{Mohammadian_2022}, the above conjecture is tight for the following construction of reduced graphs given by Kotlov and Lov\'{a}sz \cite{Kotlov_Lovasz_1996}. Starting with $K_2$ and applying the operation given in the theorem below $k-1$ times gives a graph of order $2^{k+1}-2$ and $n^-=k$.

\begin{theorem}[\cite{Kotlov_Lovasz_1996}]
    Let $G$ be a graph of order $n$, rank $r$, and adjacency matrix $A(G)$. Let $G'$ be the graph on $2n+2$ vertices whose adjacency matrix is given by 
    \[ A(G') = 
    \begin{bmatrix}
    A(G) & A(G) & \mathbf{0} & \mathbf{0}\\
    A(G) & A(G) & \mathbf{1} & \mathbf{0}\\
    \mathbf{0} & \mathbf{1} & \mathbf{0} & \mathbf{1}\\
    \mathbf{0} & \mathbf{0} & \mathbf{1} & \mathbf{0}
    \end{bmatrix},\]
    where $\mathbf{0}$ and $\mathbf{1}$ denote row or column vectors with all zeroes and all ones of appropriate sizes, respectively. Then $n^+(G')=n^+(G)+1$ and $n^-(G')=n^-(G)+1$.  
\end{theorem}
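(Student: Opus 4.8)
The plan is to compute $\In(A(G'))$ directly by reducing $A(G')$ to block-diagonal form via a congruence transformation, invoking Sylvester's law of inertia (a congruence $M \mapsto P^{\top} M P$ with $P$ invertible preserves the triple $\In$). The key observation is that in the top-left $2n \times 2n$ block $\begin{bmatrix} A(G) & A(G) \\ A(G) & A(G)\end{bmatrix}$ the $i$-th vertex of the first copy and the $i$-th vertex of the second copy play nearly identical roles, so subtracting one copy from the other should collapse this block and expose the underlying structure.

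Concretely, I would apply the congruence $A(G') \mapsto P^{\top} A(G') P$, where $P$ is the unit block-triangular (hence invertible) matrix that simultaneously subtracts the first block of $n$ coordinates from the second block of $n$ coordinates. Carrying out this symmetric row-and-column operation, the $\begin{bmatrix} A(G) & A(G) \\ A(G) & A(G)\end{bmatrix}$ block becomes $\begin{bmatrix} A(G) & \mathbf{0} \\ \mathbf{0} & \mathbf{0}\end{bmatrix}$, and one checks that the first copy of $A(G)$ then decouples entirely from the remaining coordinates. The resulting matrix is block-diagonal, with one block equal to $A(G)$ and the complementary block, on the remaining $n+2$ coordinates, equal to
\[
M' = \begin{bmatrix} \mathbf{0} & \mathbf{1} & \mathbf{0} \\ \mathbf{1}^{\top} & 0 & 1 \\ \mathbf{0}^{\top} & 1 & 0 \end{bmatrix}.
\]
Thus $\In(A(G')) = \In(A(G)) + \In(M')$, and it remains only to determine $\In(M')$.

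The final step is to recognize $M'$ as the adjacency matrix of a star: the $n$ coordinates of its first block form an independent set, each joined only to the single vertex $u$ of the third block, while $u$ is additionally joined to the lone vertex $w$ of the fourth block. Hence $M' = A(K_{1,n+1})$, whose spectrum is $\pm\sqrt{n+1}$ together with $0$ of multiplicity $n$, giving $\In(M') = (1, n, 1)$. Combining, $\In(A(G')) = (n^+(G)+1,\, n^0(G)+n,\, n^-(G)+1)$, which yields both claimed identities (and, as a consistency check, the correct order $n^+ + n^0 + n^- = 2n+2$). The computation is entirely routine once the right congruence is chosen; the only step genuinely requiring care is the bookkeeping of the block operations, so that the first copy of $A(G)$ separates cleanly and the residual matrix is correctly identified as the star $K_{1,n+1}$ rather than as a graph carrying extra edges.
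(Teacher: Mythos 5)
The paper does not prove this statement; it is quoted from Kotlov and Lov\'asz, so there is no internal proof to compare against. Your argument is correct and self-contained: the simultaneous row-and-column operation subtracting the first block of $n$ coordinates from the second is a congruence by an invertible unit block-triangular matrix, and since that operation only alters block-row and block-column $2$, it sends the $(1,2),(2,1),(2,2)$ blocks to zero while leaving the $(1,3),(1,4),(3,1),(4,1)$ blocks equal to $\mathbf{0}$, so the first copy of $A(G)$ does decouple and the residual $(n+2)\times(n+2)$ block is exactly $A(K_{1,n+1})$ with inertia $(1,n,1)$. By Sylvester's law of inertia this gives $\In(G')=\In(G)+(1,n,1)$, which is the claim (and also recovers $n^0(G')=n^0(G)+n$, consistent with $\rank(G')=r+2$).
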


In the final paragraph of \cite{Torgasev_number_of_positive_negative_1992}, Torga\v{s}ev states (paraphrased):
\begin{quote}
... any estimate of the growth of the function $k\rightarrow n^+(k)$ can be of
great importance. By the corresponding results in the papers \cite{Torgasev_two_neg_1985, Torgasev_three_neg_1989}, we
know that $n^+(1)= 1, n^+(2)= 3, n^+(3) = 6$. But, so far, we have no information about this function in the general case.
\end{quote}

But he did not propose any explicit bound on $n^+(k)$. Conjecture \ref{conj:inertia_main} is a step towards Torga\v{s}ev's problem of investigating the function $k\rightarrow n^+(k)$.

In a similar vein, the problem of upper-bounding the order of graphs by the number of non-positive eigenvalues has also been investigated by Charles, Farber, Johnson, Kennedy-Shaffer \cite{Charles_non_positive_2013}. An unpublished conjecture mentioned by Mohar in a lecture series \cite{Mohar_lecture} states that the order of a graph with $k$ non-positive eigenvalues is at most $O(k^2)$. Conjecture \ref{conj:inertia_main}, if true, would imply this. 

It is worth noting that there are results in the literature that upper-bound the positive and negative inertia in terms of other graph parameters; see, for instance, \cite{Ma_Yang_Li_2013, Fan_Wang_2017} and the references therein. An interesting conjecture by Ma, Yang and Li \cite{Ma_Yang_Li_2013} concerns the signature of a graph. 

\begin{conjecture}[\cite{Ma_Yang_Li_2013}]\label{conj:signature_cycles}
Let $G$ be a graph with signature $s(G)$. Then
\[ -c_3(G) \le s(G) \le c_5(G),\]
where $c_3(G)$ and $c_5(G)$ denote the number of cycles in $G$ having length $3$ modulo $4$ and length
$1$ modulo $4$, respectively.
\end{conjecture}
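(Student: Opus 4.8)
This is the open conjecture of Ma, Yang and Li, so what follows is a proposed line of attack together with the point at which the real difficulty appears, rather than a complete argument. Since $s(G)$, $c_3(G)$ and $c_5(G)$ are all additive over connected components, I may assume $G$ is connected.

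The approach I would take rests on turning $s(G)$ into a manifestly combinatorial quantity. Because $A(G)$ is symmetric, its characteristic polynomial $\phi(G,x)=\sum_{k=0}^{n}a_k x^{n-k}$ (with $a_0=1$) is real-rooted, and for a real-rooted polynomial Descartes' rule of signs is exact: $n^+(G)$ equals the number of sign changes in the sequence $(a_0,a_1,\dots,a_n)$, while $n^-(G)$ equals the number of sign changes in the coefficient sequence of $\phi(G,-x)$. Hence $s(G)$ is completely determined by the sign pattern of the $a_k$. By the Sachs coefficient theorem these signs are governed by the cycle structure of $G$, namely $a_k=\sum_{H}(-1)^{p(H)}2^{c(H)}$, summed over all vertex-disjoint unions $H$ of edges and cycles on exactly $k$ vertices, where $p(H)$ and $c(H)$ count the components and the cycles of $H$. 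The conjecture then becomes the assertion that the cycles, entering with the weight $2^{c(H)}$ and the sign $(-1)^{p(H)}$, can only perturb the sign pattern of the forest baseline in a way controlled by their lengths modulo $4$.

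The reason $4$ is the relevant modulus, and the engine of the proposed induction, is the forest base case. If $G$ is a forest then every $a_k$ with $k$ odd vanishes and the nonzero coefficients strictly alternate in sign, giving the same number of sign changes in both directions and hence $s(G)=0=c_3(G)=c_5(G)$. I would then induct on the cyclomatic number $m(G)-n(G)+1$, adding one edge of the cyclic part at a time. Each added edge $\{i,j\}$ is the symmetric rank-$2$ perturbation $e_ie_j^{\top}+e_je_i^{\top}$, whose nonzero eigenvalues are $+1$ and $-1$; by Weyl interlacing it changes $n^+$ and $n^-$ each by at most one, so $|s|$ moves by at most $2$ per step. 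The heart of the matter is to show that this jump is aligned with the residues mod $4$ of the newly created cycles: a new cycle of length $\equiv 1 \pmod 4$ should raise $s$ by at most one, a new cycle of length $\equiv 3 \pmod 4$ should lower it by at most one, and even cycles should be neutral. Concretely I would first settle the unicyclic and bicyclic cases directly from the Sachs expansion, where a single new $C_\ell$ inserts essentially one extra term and the residue $\ell \bmod 4$ visibly decides the direction of the new sign change, both as a check of the mechanism and as the genuine base of the induction.

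The main obstacle is precisely this inductive step. A single added edge typically creates many new cycles at once, in combination with the cycles already present, of mixed lengths and mixed residues, whereas the signature is allowed to move by at most $2$. Matching the coarse analytic bound on the signature jump against the fine combinatorial counts $c_5(G)$ and $c_3(G)$ of newly created cycles, and in particular ruling out the delicate cancellations in the Sachs sum that could let $s$ increase without a corresponding cycle of length $\equiv 1 \pmod 4$ appearing, is where the argument must do real work, and is, I expect, why the conjecture has remained open. A plausible way to organise these cancellations is to pass to a $\{0,\pm 1\}$-gain (signed) reformulation in which each cycle carries the sign $(-1)^{\lfloor \ell/2 \rfloor}$, so that the mod-$4$ dichotomy is built into the weights from the outset; proving monotonicity of $s$ under edge addition in that model, rather than for the plain adjacency matrix, looks like the most promising route.
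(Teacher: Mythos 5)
This statement is not proved in the paper: it is stated there verbatim as an open conjecture of Ma, Yang and Li, quoted only to motivate the authors' own Conjecture~\ref{conj:inertia_main}. So there is no proof in the paper to compare yours against, and your proposal --- as you yourself say --- is a research plan rather than a proof. The only proved result in this direction that the paper records is Theorem~\ref{thm:signature-and-odd-cycle}, the much coarser bound $|n^+(G)-n^-(G)|\le c_1(G)$ with $c_1$ the total number of odd cycles; your edge-addition induction via the rank-two perturbation $e_ie_j^{\top}+e_je_i^{\top}$ and Weyl's inequalities is essentially the standard route to results of that type, and it would recover a bound of that flavour, but not the refinement by residue classes modulo $4$.

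The concrete gap is exactly where you locate it, and it is worth being precise about why the step fails as stated. Weyl gives only $|s(G+e)-s(G)|\le 2$ and says nothing about the sign of the change; meanwhile adding one edge can create exponentially many new cycles through it, of both residues $1$ and $3 \bmod 4$ simultaneously, so the target inequality does not decompose edge by edge: after one addition the upper bound $c_5$ may jump by a huge amount while $s$ barely moves, and conversely nothing in the interlacing argument prevents $s$ from increasing by $2$ when only a single new cycle of length $\equiv 1\pmod 4$ appears, which would already violate the inductive invariant you want. The Descartes/Sachs reformulation is correct as far as it goes (real-rootedness does make Descartes exact, and forests do give $s=0$), but translating ``sign changes of the coefficient sequence'' into cycle counts requires controlling cancellations among Sachs terms $(-1)^{p(H)}2^{c(H)}$ over all spanning configurations, which is precisely the uncontrolled part. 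In short: the skeleton (additivity over components, forest base case, induction on cyclomatic number) is sound and standard, but the inductive step is the entire content of the conjecture and is missing; the proposal should not be read as a proof.
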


Conjecture \ref{conj:inertia_main} can also be rephrased in terms of signature as follows.

\begin{conjecture}\label{conj:inertia_main_signature_version}
    For any graph $G$, we have 
    \[ s(G) \le \binom{n^-(G)}{2}.\]
\end{conjecture}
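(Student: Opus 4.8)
The plan is to establish that Conjecture \ref{conj:inertia_main_signature_version} is merely an algebraic restatement of Conjecture \ref{conj:inertia_main}, so that the two stand or fall together; there is no independent content to prove beyond this equivalence. The starting point is the definition of the signature, $s(G) = n^+(G) - n^-(G)$, together with the elementary identity $\binom{n^-(G)}{2} = \tfrac{1}{2}\,n^-(G)\bigl(n^-(G)-1\bigr)$, both of which are recalled in the introduction.

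First I would substitute these into the claimed inequality $s(G) \le \binom{n^-(G)}{2}$ to obtain
\[ n^+(G) - n^-(G) \le \frac{n^-(G)\bigl(n^-(G)-1\bigr)}{2}. \]
Adding $n^-(G)$ to both sides and combining terms on the right gives
\[ n^+(G) \le n^-(G) + \frac{n^-(G)\bigl(n^-(G)-1\bigr)}{2} = \frac{n^-(G)\bigl(n^-(G)+1\bigr)}{2}, \]
which, after multiplying through by $2$, is precisely the inequality $2n^+(G) \le n^-(G)\bigl(n^-(G)+1\bigr)$ of Conjecture \ref{conj:inertia_main}. Since every step is reversible and valid for an arbitrary graph, the two formulations are equivalent.

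Because the manipulation is purely formal and invokes only the definition of $s(G)$ and the binomial coefficient, there is no genuine obstacle at this level: the difficulty is entirely inherited from Conjecture \ref{conj:inertia_main} itself, which remains open in general. The point of the signature formulation is therefore not that it is easier to establish, but that it places the bound alongside Conjecture \ref{conj:signature_cycles} and the broader study of the signature as a graph invariant; in practice, a proof restricted to any particular family would be carried out on whichever of the two equivalent inequalities is more convenient for that family.
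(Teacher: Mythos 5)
Your algebra verifying that $s(G) \le \binom{n^-(G)}{2}$ is equivalent to $2n^+(G) \le n^-(G)\bigl(n^-(G)+1\bigr)$ is correct, and this is exactly how the paper treats the statement: it is presented only as a rephrasing of Conjecture~\ref{conj:inertia_main}, with no independent proof (the statement remains a conjecture). Your proposal therefore matches the paper's approach.
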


\section{A coarse upper bound}
\label{section:equality_case}

We have verified Conjecture \ref{conj:inertia_main} for graphs of order at most 9, and graphs with order at most $100$ in the Wolfram Mathematica database and the \emph{House of Graphs} database. 

We note that the quadratic upper bound for $n^+$ in terms of $n^-$ cannot be improved. The line graph of $K_n$ is called \emph{triangular graph} $T(n)$. It is known that $T(n)$ is an SRG and has spectrum $(2(n-2)^{(1)}, (n-4)^{(n-1)}, (-2)^{(n(n - 3)/2)})$, see \cite[p. 10]{Brouwer_Haemers_book}. The complement of $T(n)$, therefore, has inertia $(n(n - 3)/2 + 1, 0, n-1)$ and so provides an infinite family of graphs for which $n^+ = (n^-)^2-O(n^-)$. 

It is also worth noting that there does exist a rather coarse upper bound for $n^+$ in terms of $n^-$. 

\begin{theorem}[\cite{Akbari_Cameron_Khosrovshahi}]\label{thm:coarse_bound}
For any graph $G$, we have 
\[n^+(G) \leq R(n^-(G) + 2, 2^{n^-(G)}) - n^-(G) - 1,\]
where \( R(m,n) \) denotes the Ramsey number.
\end{theorem}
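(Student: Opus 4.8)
Write $p=n^+(G)$ and $q=n^-(G)$. The plan is to pass to a full-rank induced subgraph that retains all of the positive and negative inertia, and then to bound its order by a Ramsey argument. First I would invoke the standard fact that a real symmetric matrix of rank $r$ possesses a nonsingular \emph{principal} submatrix of order $r$; applying this to $A(G)$, whose rank is $p+q$, yields an induced subgraph $H$ on $p+q$ vertices with $A(H)$ nonsingular. Cauchy interlacing gives $n^+(H)\le n^+(G)=p$ and $n^-(H)\le n^-(G)=q$, and since $A(H)$ is nonsingular of order $p+q$ these two inequalities are forced to hold with equality. Thus $H$ is an induced subgraph with inertia $(p,0,q)$ and order $|V(H)|=p+q$. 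Note that this reduction makes any prior normalization (such as removing twins or isolated vertices) unnecessary, since the nonsingular submatrix automatically discards the null part of the spectrum.

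Next I would bound the clique number of $H$. If $H$ contained an induced clique $K_{q+2}$, then $A(K_{q+2})$ would be a principal submatrix of $A(H)$; since $K_{q+2}$ has spectrum $(q+1)^{(1)},(-1)^{(q+1)}$, Cauchy interlacing would give $n^-(H)\ge q+1$, contradicting $n^-(H)=q$. Hence $H$ contains no $K_{q+2}$.

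It remains to control the independence number of $H$. Here I would use that $A(H)$ defines a nondegenerate symmetric bilinear form of signature $(p,q)$. An independent set $I\subseteq V(H)$ spans the coordinate subspace $U=\{x:\operatorname{supp}(x)\subseteq I\}$, on which the form vanishes identically because the principal block $A(H)[I]$ is the zero matrix; thus $U$ is totally isotropic of dimension $|I|$. Since the Witt index of a nondegenerate real form of signature $(p,q)$ equals $\min(p,q)$, we get $|I|\le\min(p,q)\le q<2^{q}$, so $H$ has no independent set of size $2^{q}$. Now $H$ is a graph on $p+q$ vertices with no clique of size $q+2$ and no independent set of size $2^{q}$, so by the definition of the Ramsey number $p+q<R(q+2,2^{q})$, which rearranges to the stated bound $n^+(G)\le R(n^-(G)+2,2^{n^-(G)})-n^-(G)-1$.

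The main obstacle I anticipate is the independent-set step: cleanly certifying that a large independent set is incompatible with having only $q$ negative eigenvalues. The isotropic-subspace argument above settles this and in fact delivers the stronger bound $\alpha(H)\le q$, so one could even replace $2^{q}$ by $q+1$ throughout. If one instead prefers to remain close to the reduced-graph viewpoint of \cite{Akbari_Cameron_Khosrovshahi}, the analogous step is a more hands-on argument bounding the independence number of a reduced graph by $2^{n^-}-1$, which is precisely the part where the Ramsey parameter $2^{n^-}$ enters. A secondary technical point worth checking carefully is the existence of the full-rank principal submatrix $H$ with matching signature, as this is the device that reduces the claim for an arbitrary graph to the nonsingular case.
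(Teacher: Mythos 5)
Your proof is correct, and it is worth noting that the paper itself states this theorem as a citation to \cite{Akbari_Cameron_Khosrovshahi} without reproducing a proof, so there is no in-paper argument to match against. Your route is self-contained and, as you observe, actually proves something stronger. The three ingredients all check out: (i) a real symmetric matrix of rank $r$ has a nonsingular $r\times r$ principal submatrix (e.g.\ because the sum of the $r\times r$ principal minors equals the $r$-th elementary symmetric function of the eigenvalues, which is the product of the nonzero eigenvalues and hence nonzero), and interlacing then forces the induced subgraph $H$ on $p+q$ vertices to have inertia exactly $(p,0,q)$; (ii) $\omega(H)\le q+1$ by interlacing against $K_{q+2}$; (iii) your isotropic-subspace argument is precisely a proof of the Cvetkovi\'c inertia bound $\alpha(H)\le n^0(H)+\min(n^+(H),n^-(H))=\min(p,q)\le q$, and since $2^q\ge q+1$ for all $q\ge 0$ this rules out an independent set of size $2^{q}$. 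The Ramsey step and the bookkeeping $p+q\le R(q+2,2^{q})-1$ are then immediate. The original source's argument (as you anticipate) instead passes to the twin-free reduction of $G$ --- which preserves $n^{\pm}$ by Lemma \ref{lemma:twin_inertia} --- and bounds the independence number of a reduced graph by $2^{n^-}-1$ via a neighbourhood-distinguishing argument; that is where the parameter $2^{n^-}$ in the statement genuinely comes from. Your full-rank-submatrix reduction plus the inertia bound buys a strictly better independence bound ($q$ rather than $2^{q}-1$), hence the sharper conclusion $n^+(G)\le R(n^-(G)+2,\,n^-(G)+1)-n^-(G)-1$, at the cost of no longer explaining the exact form of the cited statement. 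Either way, the theorem as stated follows.
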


The upper bound given in the above result is exponential in $n^-$, whereas in Conjecture \ref{conj:inertia_main} the upper bound is quadratic in $n^-$. As the gap is huge, it is worth proving even a polynomial bound in $n^-$, see Problem \ref{problem:polynomial_bound}. 

To the best of our knowledge, the only known examples of reduced graphs for which equality holds in Conjecture~\ref{conj:inertia_main} are the ones listed in Table \ref{table:reduced_tight_examples} below.

\begin{table}[H]
    \centering
    \begin{tabular}{|c|c|c|}
    \hline
    Graph  & Inertia \\
    \hline
    $K_2$ & $(1, 0, 1)$  \\
    $C_5$ & $(3, 0, 2)$ \\
    $H_1$ and $H_2$ (Figure \ref{fig:H1_H2}) & $(6, 0, 3)$\\
    $\SRG(27, 10, 1, 5)$ & $(21, 0, 6)$ \\
    $\SRG(275, 112, 30, 56)$ & $(253, 0,22)$\\
    \hline
    \end{tabular}
    \caption{Reduced graphs which attain the bound in Conjecture \ref{conj:inertia_main}}
    \label{table:reduced_tight_examples}
\end{table}
The $\SRG(27, 10, 1, 5)$ is also known as generalized quadrangle \( GQ(2,4) \), and is the complement of the Schl\"{a}fli graph.  The $\SRG(275, 112, 30, 56)$ is the well-known McLaughlin graph\footnote{See entries on \href{https://mathworld.wolfram.com/GeneralizedQuadrangle.html}{Generalized Quadrangle} and \href{https://mathworld.wolfram.com/McLaughlinGraph.html}{McLaughlin Graph} at Wolfram MathWorld.}. 

In light of the following lemma, infinitely many graphs that attain the bound in Conjecture \ref{conj:inertia_main} can be obtained simply by adding twins to a known tight example. 

\begin{lemma}[\cite{Fan_Qian_2009, Geng_Wu_Wang_2020}]\label{lemma:twin_inertia}
Let $G$ be a graph and $u, v\in V(G)$ be distinct vertices. If
$N(u) = N(v)$, then $n^+(G)=n^+(G-u)$ and $n^-(G)=n^-(G-u)$. 
\end{lemma}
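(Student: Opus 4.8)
The plan is to exhibit an explicit congruence transformation of $A(G)$ that splits off a single zero eigenvalue and leaves behind exactly $A(G-u)$, and then to invoke Sylvester's law of inertia, which states that congruent real symmetric matrices share the same inertia.

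First I would record an elementary structural observation: the hypothesis $N(u)=N(v)$ forces $u$ and $v$ to be non-adjacent. Indeed, in a simple graph $v\notin N(v)$, so if $u$ and $v$ were adjacent we would have $v\in N(u)=N(v)$, a contradiction. Consequently the rows (and columns) of $A(G)$ indexed by $u$ and $v$ coincide: away from the positions $u$ and $v$ they agree because $N(u)=N(v)$, while the entries $A_{uu}$, $A_{vv}$, $A_{uv}$, $A_{vu}$ are all zero (no loops, and $u\not\sim v$). Equivalently, $A(G)(e_u-e_v)=\mathbf 0$, so the vector $e_u-e_v$ lies in $\ker A(G)$.

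Next I would perform the congruence. Let $Q$ be the invertible matrix whose column indexed by $u$ is $e_u-e_v$ and whose remaining columns are the standard basis vectors $e_w$ for $w\neq u$ (in particular $e_v$ is retained). Since $A(G)$ is symmetric and $e_u-e_v\in\ker A(G)$, the $u$-indexed row and column of $Q^{\top}A(G)\,Q$ vanish identically, while for $i,j\neq u$ one has $(Q^{\top}A(G)\,Q)_{ij}=e_i^{\top}A(G)\,e_j=A(G)_{ij}$. Hence, after reordering $u$ to the front,
\[ Q^{\top} A(G)\, Q = \begin{pmatrix} 0 & \mathbf 0 \\ \mathbf 0 & A(G-u) \end{pmatrix}, \]
where the lower-right block is precisely the principal submatrix of $A(G)$ on $V(G)\setminus\{u\}$, namely $A(G-u)$. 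Because $Q$ is nonsingular, Sylvester's law of inertia yields $\In(A(G))=\In(\mathrm{diag}(0,A(G-u)))$, and therefore $n^+(G)=n^+(G-u)$ and $n^-(G)=n^-(G-u)$ (with the harmless byproduct $n^0(G)=n^0(G-u)+1$).

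The argument is short, so I do not anticipate a serious obstacle; the only point requiring care is the bookkeeping in the congruence step — verifying that replacing $e_u$ by the kernel vector $e_u-e_v$ genuinely clears the entire $u$-row and $u$-column while leaving the surviving $(n-1)\times(n-1)$ block equal to $A(G-u)$ itself rather than some perturbation of it. This becomes transparent once the vertices are ordered with $u,v$ first and $A(G)$ is written in block form with the common neighbourhood vector appearing identically in the $u$- and $v$-rows. An alternative route via Cauchy interlacing is possible but messier, since it would require separately tracking that the multiplicity of the zero eigenvalue drops by exactly one; the congruence argument sidesteps this entirely.
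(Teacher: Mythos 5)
Your proof is correct. The paper does not prove this lemma at all --- it is quoted from the literature with citations --- so there is no in-paper argument to compare against; your congruence argument (observing that $N(u)=N(v)$ forces $u\not\sim v$, hence $e_u-e_v\in\ker A(G)$, then conjugating by the unimodular matrix that replaces $e_u$ with $e_u-e_v$ to obtain $\mathrm{diag}\bigl(0, A(G-u)\bigr)$ and invoking Sylvester's law of inertia) is a clean, complete, and standard way to establish it, and it correctly yields the additional fact $n^0(G)=n^0(G-u)+1$.
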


But it seems difficult to find reduced graphs that are tight for Conjecture \ref{conj:inertia_main}. We raise the following question.

\begin{problem} Does there exist an infinite family of reduced graphs for which equality holds in Conjecture \ref{conj:inertia_main}? In fact, are there any examples of reduced graphs which attain equality in Conjecture \ref{conj:inertia_main} apart from the ones mentioned in Table \ref{table:reduced_tight_examples}?
\end{problem}

A particularly intriguing case is when $n^- = 4$. We ask:

\begin{problem}
 Does there exist a (reduced) graph with $n^-=4$ and $n^+ = 10$?
\end{problem}

We point out here that if an $\SRG$ attains the bound in Conjecture \ref{conj:inertia_main}, then it has to be a Smith graph (graphs with Krein parameter $q_{22}^2 = 0$, see \cite[Chapter 1]{Brouwer_Maldeghem_2022} for more). 

\section{Special graph families}
\label{section:special_families}

In this section, we verify Conjecture \ref{conj:inertia_main} for some special graph families. Clearly, the conjecture holds for strongly regular graphs. By the results of Torga\v{s}ev \cite{Torgasev_two_neg_1985, Torgasev_three_neg_1989} (see Table \ref{table:Torgasev}), the conjecture is true for graphs with at most three negative eigenvalues. In what follows, we verify Conjecture \ref{conj:inertia_main} for random graphs, graphs with a cut-vertex (under the assumption that the conjecture holds for 2-connected graphs), subquartic graphs and planar graphs, line graphs, self-complementary graphs, graphs with at most 6 odd cycles, tensor product and join of graphs, and cographs.

\subsection{Random graphs}

Here, we consider the inertia of the random graph $G(n, 1/2)$. Martin and Wong \cite{Martin_Wong_2009} proved that almost all integer matrices have no integer eigenvalues. It follows that $n^0(G(n, 1/2))=0$ with probability tending to 1.

Using Wigner's semicircle law \cite{Wigner_1958} in the form given by Arnold \cite{Arnold_1967} (with appropriate normalization), we have (also see the discussion in \cite[Section 2]{Elphick_Linz_2024})
\[
n^-(G(n, 1/2))= \frac{2n}{\pi} \int_{-1}^0 \sqrt{1 - x^2} dx = \frac{n}{2} \quad \mbox{  and  } \quad n^+(G(n, 1/2)) - 1 = \frac{2n}{\pi} \int_{0}^1 \sqrt{1 - x^2} dx = \frac{n}{2}.\]

The $-1$ term for $n^+$ corresponds to the largest eigenvalue, which is excluded from Wigner's symmetry about zero. Therefore, $n^+ \approx n^-$ and hence Conjecture \ref{conj:inertia_main} holds for almost all graphs.

\subsection{Graphs with a cut vertex}

If \( G \) is the disjoint union of two graphs \( G_1 \) and \( G_2 \), and both \( G_1 \) and \( G_2 \) satisfy Conjecture~\ref{conj:inertia_main_signature_version} (equivalently, Conjecture~\ref{conj:inertia_main}), then 
\begin{equation}\label{eq:disconnected_inertia}
   s(G) = s(G_1)+s(G_2)\le \binom{n^-(G_1)}{2} + \binom{n^-(G_2)}{2}\le \binom{n^-(G_1)+n^-(G_2)}{2}=\binom{n^-(G)}{2}. 
\end{equation}

Hence, to prove Conjecture \ref{conj:inertia_main_signature_version} (equivalently, Conjecture \ref{conj:inertia_main}), it suffices to work with connected graphs. We now prove that it suffices to work with 2-connected graphs. We first recall some known results. We call a vertex a \emph{leaf} if its degree is $1$.

\begin{lemma}[\cite{Ma_Yang_Li_2013}]\label{lemma:inertia_leaf}
    Let $G$ be a graph containing a leaf $u$ with neighbour $v$, and $H=G-u-v$. Then $n^+(G)=n^+(H)+1$ and $n^-(G)=n^-(H)+1$. 
\end{lemma}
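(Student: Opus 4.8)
The plan is to produce an explicit congruence transformation that block-diagonalizes $A(G)$, separating off a $2\times 2$ block carried by the leaf and its neighbour, and then to apply Sylvester's law of inertia.

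First I would order the vertices of $G$ so that the leaf $u$ comes first, its unique neighbour $v$ second, and the vertices of $H=G-\{u,v\}$ afterwards. Since $u$ has degree $1$ with $N(u)=\{v\}$, the row and column indexed by $u$ are zero except for the single entry linking $u$ to $v$. Writing $\mathbf{b}$ for the $0/1$ vector recording the adjacencies of $v$ to the vertices of $H$, the adjacency matrix takes the block form
\[
A(G)=\begin{bmatrix} 0 & 1 & \mathbf{0}^{T}\\ 1 & 0 & \mathbf{b}^{T}\\ \mathbf{0} & \mathbf{b} & A(H)\end{bmatrix}.
\]

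Next I would exploit the single $1$ in the $u$--$v$ position to clear $\mathbf{b}$. Set
\[
P=\begin{bmatrix} 1 & 0 & -\mathbf{b}^{T}\\ 0 & 1 & \mathbf{0}^{T}\\ \mathbf{0} & \mathbf{0} & I\end{bmatrix},
\]
so that right-multiplication by $P$ performs the column operations ``add $-b_j$ times column $u$ to column $H_j$'', and left-multiplication by $P^{T}$ performs the matching row operations. The crucial point is that the only nonzero entry of column $u$ lies in row $v$; consequently these operations touch only the $(v,H)$ and $(H,v)$ blocks, turning $\mathbf{b}$ into $\mathbf{0}$ while leaving $A(H)$ entirely unchanged. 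A direct computation then yields
\[
P^{T}A(G)P=\begin{bmatrix} 0 & 1 & \mathbf{0}^{T}\\ 1 & 0 & \mathbf{0}^{T}\\ \mathbf{0} & \mathbf{0} & A(H)\end{bmatrix},
\]
which is block-diagonal.

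Finally, since $P$ is triangular with unit diagonal it has determinant $1$ and is invertible, so Sylvester's law of inertia ensures that $A(G)$ and $P^{T}A(G)P$ have the same inertia. The $2\times 2$ block $\begin{bmatrix} 0 & 1\\ 1 & 0\end{bmatrix}$ has eigenvalues $\pm 1$ and hence inertia $(1,0,1)$, while the remaining block contributes $\In(H)$. This gives $n^{+}(G)=n^{+}(H)+1$, $n^{0}(G)=n^{0}(H)$, and $n^{-}(G)=n^{-}(H)+1$, as claimed. I expect no genuine obstacle here beyond careful bookkeeping: the one place requiring attention is the verification that the congruence does not perturb the $A(H)$ block, and this is exactly where the leaf hypothesis — the fact that column $u$ has a single nonzero entry — does all the work.
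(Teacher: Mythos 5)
Your proof is correct. The paper does not actually prove this lemma --- it is quoted with a citation to Ma, Yang and Li --- so there is no in-text argument to compare against; your congruence argument is the standard proof of this fact, and you carry it out correctly: the matrix $P^{T}A(G)P$ is exactly as you claim (the column operations only alter the $(v,H)$ and $(H,v)$ blocks because the $u$-column of $A(G)$ has its sole nonzero entry in row $v$), Sylvester's law of inertia applies since $\det P=1$, and the $2\times 2$ block $\bigl[\begin{smallmatrix}0&1\\1&0\end{smallmatrix}\bigr]$ contributes inertia $(1,0,1)$. The argument also degenerates gracefully when $H$ is empty (e.g.\ $G=K_2$), and as a bonus it yields $n^{0}(G)=n^{0}(H)$, which is consistent with the statement though not required by it.
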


\begin{lemma}[\cite{Ma_Wong_Tian_2016}]\label{lemma:rank_vertex_deleted_subgraph}
Let $G$ be a graph, $v \in V(G)$ and $H = G - v$. Then 
\[|s(G) - s(H)| \leq 1.\]
Moreover, if $\rank(G) = \rank(H)$ or $\rank(G) = \rank(H) + 2$, then $s(G) = s(H).$
\end{lemma}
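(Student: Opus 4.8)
The plan is to derive everything from the Cauchy interlacing theorem applied to the adjacency matrices. Since $H = G - v$, the matrix $A(H)$ is obtained from $A(G)$ by deleting the row and column indexed by $v$, so it is a principal submatrix of order one less. Writing the eigenvalues of $A(G)$ as $\lambda_1 \ge \cdots \ge \lambda_n$ and those of $A(H)$ as $\mu_1 \ge \cdots \ge \mu_{n-1}$, interlacing gives $\lambda_i \ge \mu_i \ge \lambda_{i+1}$ for every $i \in \{1, \ldots, n-1\}$. This single chain of inequalities is the whole engine of the proof.

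First I would read off from interlacing the effect of the deletion on the positive and negative inertia. From $\mu_i \le \lambda_i$ we see that $\mu_i > 0$ forces $\lambda_i > 0$, hence $n^+(H) \le n^+(G)$; and from $\mu_i \ge \lambda_{i+1}$ we see that $\lambda_{i+1} > 0$ forces $\mu_i > 0$, hence $n^+(H) \ge n^+(G) - 1$. Thus $n^+(G) - n^+(H) \in \{0,1\}$. Applying the identical argument to $-A(G)$ (equivalently, reading the same interlacing chain from the bottom up) yields $n^-(G) - n^-(H) \in \{0,1\}$.

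Setting $a = n^+(G) - n^+(H)$ and $b = n^-(G) - n^-(H)$, both lie in $\{0,1\}$, and since $s = n^+ - n^-$ we obtain $s(G) - s(H) = a - b \in \{-1, 0, 1\}$, which is the first claim. For the ``moreover'' part I would use the bookkeeping identity $\rank(G) - \rank(H) = a + b$. If $\rank(G) = \rank(H)$ then $a + b = 0$, forcing $a = b = 0$; if $\rank(G) = \rank(H) + 2$ then $a + b = 2$, forcing $a = b = 1$. In either case $s(G) - s(H) = a - b = 0$, as required, and one sees along the way that $\rank(G) = \rank(H) + 1$ is precisely the only regime in which the signature can genuinely change.

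There is essentially no deep obstacle here once interlacing is invoked; the only care needed is in orienting the interlacing inequalities correctly for both the positive and the negative ends, and in confirming that the two extreme rank changes ($0$ and $2$) are exactly the cases that pin down both $a$ and $b$ simultaneously. The argument is purely linear-algebraic and does not use any graph-theoretic structure beyond the fact that $A(H)$ is a principal submatrix of $A(G)$.
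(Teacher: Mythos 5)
Your argument is correct and complete: Cauchy interlacing gives $n^+(G)-n^+(H)\in\{0,1\}$ and $n^-(G)-n^-(H)\in\{0,1\}$, and the bookkeeping with $a-b$ and $a+b$ settles both the bound $|s(G)-s(H)|\le 1$ and the two rigidity cases. The paper itself states this lemma as a citation to \cite{Ma_Wong_Tian_2016} and supplies no proof, so there is nothing to compare against; your interlacing derivation is the standard and correct way to establish it.
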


\begin{theorem}\label{thm:2_connected}
For any  graph $G$, we have
\[
s(G) \leq \binom{n^{-}(G)}{2},
\]
provided the inequality holds for all $2$-connected graphs.
\end{theorem}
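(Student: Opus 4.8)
The plan is to reduce the general statement to the $2$-connected case by repeatedly peeling off structural features that either decrease the order with controlled effect on the signature, or that let us split the graph at a cut vertex. By the discussion preceding the theorem (the disjoint-union computation together with the superadditivity of $\binom{\cdot}{2}$), we may assume $G$ is connected, so the induction is on the order $n(G)$ of a connected graph that is not $2$-connected, i.e.\ one that has a cut vertex (or is so small that it is trivially $2$-connected or handled as a base case). The base cases — $K_1$, $K_2$, and any $2$-connected graph — are covered either trivially or by the hypothesis.

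First I would dispose of graphs containing a leaf. If $G$ has a leaf $u$ with neighbour $v$ and $H = G - \{u,v\}$, then \autoref{lemma:inertia_leaf} gives $n^+(G) = n^+(H)+1$ and $n^-(G) = n^-(H)+1$, so $s(G) = s(H)$. By induction $s(H) \le \binom{n^-(H)}{2} = \binom{n^-(G)-1}{2} \le \binom{n^-(G)}{2}$, and we are done. (Here $H$ has smaller order and, after discarding any isolated vertices it may have created, its components are again handled by the connected reduction; one checks $n^-(H)\ge 0$ so the binomial makes sense.) Thus we may assume $G$ is connected, has no leaves, and has a cut vertex $v$.

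The heart of the argument is the cut-vertex reduction. Let $v$ be a cut vertex and write the components of $G - v$ as inducing pieces that, together with $v$, give graphs $G_1, \ldots, G_t$ whose union (sharing only $v$) is $G$. The clean approach is to delete $v$ and control the signature via \autoref{lemma:rank_vertex_deleted_subgraph}: with $H = G - v$ we have $|s(G) - s(H)| \le 1$, and $H$ is the disjoint union of the components, each of smaller order, so by the connected-union reduction and induction $s(H) \le \binom{n^-(H)}{2}$. The delicate point is that losing a single vertex may only drop $n^-$ by $0$ or $1$ while $s$ can rise by $1$, and $\binom{m}{2}$ is not monotone enough across a drop of one in $n^-$ to absorb a $+1$ in $s$ for free. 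So I would use the \emph{moreover} clause of \autoref{lemma:rank_vertex_deleted_subgraph}: if $\rank(G) \in \{\rank(H), \rank(H)+2\}$ then in fact $s(G) = s(H)$ and $n^-(G) \ge n^-(H)$, closing the induction immediately. The remaining case is $\rank(G) = \rank(H)+1$, where exactly one of $n^+, n^-$ increases by $1$; if $n^-(G) = n^-(H)+1$ then $s(G) = s(H)$ again works with an even larger binomial, so the only genuinely dangerous subcase is $n^+(G) = n^+(H)+1$, $n^-(G) = n^-(H)$, $s(G) = s(H)+1$.

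The main obstacle, then, is precisely this last subcase: a cut vertex whose deletion raises the rank by one on the positive side, forcing $s(G) = s(H)+1$ while $n^-$ is unchanged, so that the naive bound gives $s(G) \le \binom{n^-(G)}{2}+1$ and overshoots by one. I expect this to be the crux, and I would attack it by exploiting the structure at the cut vertex more carefully rather than through the blunt single-vertex deletion. Concretely, I would partition using the cut-vertex and treat the blocks $G_i = G[V(C_i)\cup\{v\}]$ directly, proving a signature-additivity statement of the form $s(G) \le \sum_i s(G_i)$ together with $n^-(G) \ge \sum_i n^-(G_i)$ (each block being of strictly smaller order, hence covered by induction down to the $2$-connected base), and then using superadditivity of $\binom{\cdot}{2}$ exactly as in the disjoint-union computation. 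The technical content is bounding how the quadratic form of $A(G)$ restricts to and decomposes over the blocks that overlap only at $v$; establishing the correct inertia inequalities for this overlap — essentially a rank-one interaction at the shared vertex — is where the real work lies, and it is what lets us avoid the off-by-one loss that the crude vertex-deletion bound incurs.
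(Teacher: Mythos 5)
Your reduction to connected graphs, your treatment of leaves via Lemma~\ref{lemma:inertia_leaf}, and your case analysis of the cut-vertex deletion via Lemma~\ref{lemma:rank_vertex_deleted_subgraph} all match the paper, and you correctly isolate the single dangerous subcase: $\rank(G)=\rank(H)+1$ with $n^+(G)=n^+(H)+1$, $n^-(G)=n^-(H)$, and $s(G)=s(H)+1$, where $H=G-v$. But your argument stops there: you only sketch a strategy for this subcase, and the two inequalities you propose to establish, namely $s(G)\le\sum_i s(G_i)$ and $n^-(G)\ge\sum_i n^-(G_i)$ over the blocks $G_i=G[V(C_i)\cup\{v\}]$, are both false in general. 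Take $G$ to be the bowtie (two triangles sharing the cut vertex $v$): then $G_1\cong G_2\cong K_3$, so $\sum_i s(G_i)=-2$ and $\sum_i n^-(G_i)=4$, while the bowtie has spectrum $\tfrac{1+\sqrt{17}}{2},\,1,\,-1,\,-1,\,\tfrac{1-\sqrt{17}}{2}$, hence $s(G)=-1>-2$ and $n^-(G)=3<4$. So the proposed route to the crux does not go through, and this is a genuine gap.

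The paper closes the dangerous subcase without any block decomposition. Since $n^-(G)=n^-(H)$ there, it applies the induction hypothesis to the components $C_1,\dots,C_t$ (with $t\ge 2$) of $H=G-v$ and uses the strict superadditivity
\[
\sum_{i=1}^t\binom{n^-(C_i)}{2}\le\binom{n^-(H)}{2}-\sum_{i<j}n^-(C_i)\,n^-(C_j).
\]
Because leaves were already eliminated, $\delta(G)\ge 2$ forces every $C_i$ to have order at least $2$ (a singleton component of $G-v$ would be a leaf of $G$), hence $n^-(C_i)\ge 1$, so the cross term is at least $1$ and exactly absorbs the $+1$ in $s(G)=s(H)+1$. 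You had all the ingredients --- the component decomposition of $H$, the superadditivity computation from the disjoint-union case, and the standing assumption $\delta(G)\ge 2$; what is missing is the observation that the slack in the superadditivity inequality, rather than a finer decomposition at the cut vertex, supplies the needed unit.
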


\begin{proof}
We proceed by induction on the order $n$. When $n\le 3$, the assertion is trivial. So let $G$ be a graph of order $n\ge 4$. 
In light of \eqref{eq:disconnected_inertia}, we can assume that $G$ is a connected graph. If $G$ is 2-connected, then the assertion holds by assumption. So assume $G$ is a connected graph, which is not 2-connected.

First, suppose that $G$ has a leaf $u$ with neighbour $w$. By Lemma \ref{lemma:inertia_leaf}, we have $s(G)=s(G-u-w)$. By the induction hypothesis, we conclude 
\[ s(G) = s(G-u-w) \le \binom{n^-(G-u-w)}{2} \le \binom{n^-(G)}{2}.\]
So, in what follows, assume that the minimum degree $\delta(G)\ge 2$. By the choice of $G$, it has a cut-vertex, say $v$. Let $H=G-v$. By Lemma \ref{lemma:rank_vertex_deleted_subgraph}, $|s(G)-s(H)|\le 1$, and so one of the following holds:
\begin{enumerate}[$(a)$]
    \item $n^+(G) = n^+(H)$ and $n^-(G)=n^-(H)$,
    \item $n^+(G) = n^+(H) + 1$ and $n^-(G)=n^-(H)$,
    \item $n^+(G) = n^+(H)$ and $n^-(G)=n^-(H)+1$,
    \item $n^+(G) = n^+(H)+1$ and $n^-(G)=n^-(H)+1$.
\end{enumerate}

In cases $(a), (c)$ and $(d)$, we see that $s(G)\le s(H)$. So by the induction hypothesis, we have
\[ s(G) \le s(H) \le \binom{n^-(H)}{2} \le \binom{n^-(G)}{2}.\]

Now, we consider the case $(b)$. In this case, $s(G) = s(H) + 1$. Denote the components of $H$ by $C_1, \dots, C_t$, where $t \geq 2$ by the definition of $H$. We have
\[
n^{+}(H) = \sum_{i=1}^t n^{+}(C_i), \quad n^{-}(H) = \sum_{i=1}^t n^{-}(C_i), \quad s(H) = \sum_{i=1}^t s(C_i).
\]
By the induction hypothesis, we have
\[
s(C_i) \leq \binom{n^{-}(C_i)}{2},\] 
for each $i$, which implies 
\[s(H) \leq \sum_{i=1}^t \binom{n^{-}(C_i)}{2} = \binom{n^{-}(H)}{2} - \sum_{i<j} n^{-}(C_i)n^{-}(C_j).
\]
Since $\delta(G)\ge 2$, each $C_i$ has order at least 2, implying $n^{-}(C_i) \geq 1$, for $i=1, \ldots, t$. Hence
\[
\sum_{i<j} n^{-}(C_i)n^{-}(C_j) \geq 1.
\]
It follows that
\[
s(G) = s(H) + 1 \leq \binom{n^{-}(H)}{2}  =  \binom{n^{-}(G)}{2}.
\]
This completes the proof.
\end{proof}

\subsection{Subquartic graphs and planar graphs}

In this subsection, we verify Conjecture \ref{conj:inertia_main} for subquartic graphs (i.e., graphs with maximum degree at most 4) and planar graphs.

Let $G$ be a graph with chromatic number $\chi(G)\le 4$. We can assume that $n^-(G)\ge 4$; otherwise Conjecture \ref{conj:inertia_main} holds by Table \ref{table:Torgasev}. 

Recently, the following result was obtained by Elphick, Tang and Zhang \cite{Elphick_Tang_Zhang_2025}.

\begin{theorem}[\cite{Elphick_Tang_Zhang_2025}]\label{thm:inertia_fractional_chromatic}
For any graph $G$, we have
\[
1 + \max{\left\{\frac{n^+(G)}{n^-(G)} , \frac{n^-(G)}{n^+(G)}\right\}} \le \chi_f(G),
\]
where $\chi_f(G)$ denotes the fractional chromatic number of $G$. 
\end{theorem}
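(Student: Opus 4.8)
The plan is to prove the two one‑sided inertia inequalities $n^+(G)\le(\chi_f(G)-1)\,n^-(G)$ and $n^-(G)\le(\chi_f(G)-1)\,n^+(G)$; together they give $\max\{n^+(G)/n^-(G),\,n^-(G)/n^+(G)\}\le\chi_f(G)-1$, which is the assertion. The two are interchanged by swapping the roles of the positive and negative eigenspaces, so I would concentrate on the first. Both sides are unchanged by adding twins (Lemma~\ref{lemma:twin_inertia}, together with the fact that non‑adjacent twins may share a colour and so do not affect $\chi_f$), so one may assume $G$ is reduced and has an edge.

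First I would dispose of the case $n^0(G)=0$, which already shows why the bound is natural and where equality comes from. Here Cvetkovi\'c's inertia bound gives $\alpha(G)\le n^0(G)+\min\{n^+(G),n^-(G)\}=\min\{n^+(G),n^-(G)\}$, while every graph satisfies $\chi_f(G)\ge n/\alpha(G)$. Combining these,
\[
\chi_f(G)\;\ge\;\frac{n^+(G)+n^-(G)}{\min\{n^+(G),n^-(G)\}}\;=\;1+\max\Bigl\{\tfrac{n^+(G)}{n^-(G)},\tfrac{n^-(G)}{n^+(G)}\Bigr\}.
\]
This is already tight for strongly regular graphs such as the Petersen graph and for $C_5$, where $\alpha(G)$ and $\min\{n^+(G),n^-(G)\}$ coincide.

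The real content is therefore the presence of zero eigenvalues, where $n/\alpha$ is too weak (already for $P_5$, whose null vector is supported on an independent set). For the general case I would encode a fractional colouring spectrally. Writing $\chi_f(G)=a/b$, fix a homomorphism $G\to K(a,b)$ into the Kneser graph, and for $i\in\{1,\dots,a\}$ let $Q_i$ be the diagonal $0/1$ projection onto the (independent) colour class $\{v:i\in C(v)\}$. Then $\sum_i Q_i=bI$ and $Q_iA(G)Q_i=0$ for every $i$. With $\omega=e^{2\pi i/a}$ and the phase matrices $D_t=\sum_i\omega^{ti}Q_i$, the relation $\sum_t\omega^{t(i-j)}=a\delta_{ij}$ yields the two identities
\[
b^2A(G)=-\sum_{t=1}^{a-1}D_tA(G)D_t^{*},\qquad \sum_{t=1}^{a-1}D_t^{*}D_t=b(a-b)\,I .
\]
Since $n^+\bigl(D_t(-A(G))D_t^{*}\bigr)\le n^+(-A(G))=n^-(G)$ and the positive index of inertia is subadditive, the first identity immediately gives the crude bound $n^+(G)\le(a-1)\,n^-(G)$.

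The hard part will be sharpening the factor $a-1$ to $\tfrac{a-b}{b}=\chi_f(G)-1$; indeed one checks on $C_5$ (with $a=5,b=2$) that $a-1=4$ is far from the sharp $\chi_f-1=\tfrac32$. The crude step discards the magnitude information recorded by the second identity: the total ``energy'' of the phases is $b(a-b)$ while the main term carries weight $b^2$, and their ratio is exactly $\chi_f(G)-1$. Equivalently, the argument must exploit that every vertex lies in precisely $b$ of the $a$ classes (the overlap hidden in $\sum_iQ_i=bI$), which a pure rank/subadditivity count cannot detect. I would attempt this by replacing subadditivity with a quantitative argument on the positive eigenspace $V_+$: for a unit eigenvector $x$ with $A(G)x=\lambda x$, $\lambda>0$, one has $\sum_{t}\langle D_t^{*}x,\,A(G)\,D_t^{*}x\rangle=-b^2\lambda<0$, forcing the projections of the $D_t^{*}x$ onto the negative eigenspace $V_-$ to be nonzero; converting this, weighted by the energy identity rather than merely counted, into an embedding of $V_+$ into $\tfrac{a-b}{b}$ copies of $V_-$ is precisely the crux. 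Since $\chi_f(G)$ is attained at a finite $a/b$, carrying this out for that representation and its mirror image would complete the proof.
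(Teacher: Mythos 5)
The paper does not actually prove this theorem; it is quoted from the reference \cite{Elphick_Tang_Zhang_2025}, so there is no internal argument to compare against. Judged on its own terms, your proposal is not a complete proof: it contains a genuine gap that you yourself flag as ``the crux.''

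Your preliminary observations are fine. The case $n^0(G)=0$ does follow from Cvetkovi\'c's inertia bound together with $\chi_f(G)\ge n/\alpha(G)$, and the two identities
\[
b^2A(G)=-\sum_{t=1}^{a-1}D_tA(G)D_t^{*},\qquad \sum_{t=1}^{a-1}D_t^{*}D_t=b(a-b)I
\]
are correct (the inertia inequality $n^+(DMD^{*})\le n^+(M)$ also survives the possible singularity of $D_t$). But the only bound these yield by subadditivity is $n^+(G)\le(a-1)\,n^-(G)$, where $a$ is the numerator of the Kneser representation, not $\chi_f(G)$. Since $a$ can be arbitrarily large for fixed $\chi_f=a/b$ (already $a-1=4$ versus $\chi_f-1=3/2$ for $C_5$), this is strictly weaker than the claimed inequality, and no amount of choosing the ``minimal'' representation repairs it. The step that would close the gap --- converting the energy identity $\sum_t D_t^{*}D_t=b(a-b)I$ into an embedding of the positive eigenspace into $\frac{a-b}{b}$ weighted copies of the negative eigenspace --- is stated as an intention, not carried out, and it is precisely where all the difficulty of the theorem lives: a dimension-counting (rank subadditivity) argument is insensitive to the multiplicity $b$ with which each vertex is covered, so some genuinely new mechanism (a trace or averaging argument over the positive eigenspace, or an entirely different route through the independent-set LP formulation of $\chi_f$) is required. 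Note also that the reduction to twin-free graphs does not dispose of zero eigenvalues ($P_5$ is reduced with $n^0=1$), so the general case cannot be reached from the $n^0=0$ case by that route either. As it stands, the proposal proves the ordinary chromatic-number version $1+\max\{n^+/n^-,\,n^-/n^+\}\le\chi(G)$ (take $b=1$) and the fractional version only when $n^0(G)=0$, but not the stated theorem.
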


An immediate corollary is that 
\[
1 + \max{\left\{\frac{n^+(G)}{n^-(G)} , \frac{n^-(G)}{n^+(G)}\right\}} \le \chi(G).
\]

So if $G$ is a graph such that $\chi(G)\le 3$, then 
\[n^+(G)\le 2n^-(G),\] which implies $2n^+(G)\le n^-(G)(n^-(G)+1)$, whenever $n^-(G)\ge 3$. Hence, Conjecture \ref{conj:inertia_main} holds for $G$. In particular, we note that Conjecture \ref{conj:inertia_main} holds for subcubic graphs.

Now, assume that $\chi(G)=4$. Arguing as before, we have 
\[n^+(G)\le 3n^-(G).\]
So if $n^-(G)\ge 5$, then Conjecture \ref{conj:inertia_main} holds for $G$. The remaining case is $n^-(G)=4$. We leave this case for future work, but we prove Conjecture \ref{conj:inertia_main} for two families: graphs with maximum degree 4 and planar graphs. We require the following lemmas.

\begin{lemma}[\cite{Gregory_Heyink_Meulen_2003}]\label{lemma:inertia-interlacing}
Let \( A \) and \( B \) be two Hermitian matrices of the same order. Then\begin{enumerate}[$(i)$]
    \item $n^+(A) - n^-(B)\le n^+(A+B) \le n^+(A) + n^+(B)$.
    \item $n^-(A) - n^+(B) \le n^-(A+B) \le n^-(A) + n^-(B)$.
\end{enumerate}
\end{lemma}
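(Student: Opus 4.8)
The plan is to reduce all four inequalities to a single ``master'' estimate, namely the upper bound $n^+(A+B) \le n^+(A) + n^+(B)$, and then recover everything else from the two elementary identities $n^-(M) = n^+(-M)$ (valid for any Hermitian $M$) and $A = (A+B) + (-B)$. The natural tool for the master estimate is the variational description of inertia: for a Hermitian matrix $M$ of order $n$, the quantity $n^+(M)$ equals the largest dimension of a subspace on which the form $x \mapsto x^* M x$ is positive definite, while $n - n^+(M) = n^0(M) + n^-(M)$ equals the dimension of the span of the eigenvectors belonging to the nonpositive eigenvalues of $M$, a subspace on which the form is negative semidefinite. Both facts are immediate from the spectral decomposition of $M$.

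First I would establish $n^+(A+B) \le n^+(A) + n^+(B)$ by a dimension count. Let $U$ be the span of the eigenvectors of $A$ for its nonpositive eigenvalues, so $\dim U = n - n^+(A)$ and $x^* A x \le 0$ on $U$; define $W$ analogously for $B$, with $\dim W = n - n^+(B)$. On $U \cap W$ one has $x^*(A+B)x = x^* A x + x^* B x \le 0$, so $A+B$ is negative semidefinite there, and $\dim(U\cap W) \ge \dim U + \dim W - n = n - n^+(A) - n^+(B)$. Any subspace on which $A+B$ is positive definite meets $U \cap W$ only at the origin, so its dimension is at most $n - \dim(U\cap W) \le n^+(A) + n^+(B)$; taking such a subspace of maximal dimension yields the claim.

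Next I would obtain the lower bound in $(i)$ by applying the master estimate to the pair $A+B$ and $-B$. Since $A = (A+B) + (-B)$ and $n^+(-B) = n^-(B)$,
\[
n^+(A) \le n^+(A+B) + n^+(-B) = n^+(A+B) + n^-(B),
\]
which rearranges to $n^+(A) - n^-(B) \le n^+(A+B)$. Finally, part $(ii)$ follows from part $(i)$ applied to the pair $-A,-B$, together with $n^-(M) = n^+(-M)$: the upper bound reads $n^-(A+B) = n^+(-A-B) \le n^+(-A) + n^+(-B) = n^-(A) + n^-(B)$, and the lower bound of $(i)$ for $-A,-B$ transforms into $n^-(A) - n^+(B) \le n^-(A+B)$.

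I do not anticipate a serious obstacle. The only step needing care is the intersection-dimension argument, where one must keep the distinction between ``positive definite'' and ``negative semidefinite'' sharp so that the two subspaces genuinely meet only at $0$; everything after the master estimate is formal manipulation, and the reduction to a single inequality is precisely what keeps the bookkeeping short.
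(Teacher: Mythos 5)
Your proof is correct. Note that the paper does not prove this lemma at all --- it is quoted from Gregory, Heyink and Vander Meulen \cite{Gregory_Heyink_Meulen_2003} --- so there is no internal argument to compare against; your derivation is the standard one. The reduction of all four inequalities to the single subadditivity bound $n^+(A+B)\le n^+(A)+n^+(B)$ via $n^-(M)=n^+(-M)$ and the decomposition $A=(A+B)+(-B)$ is clean, and the dimension-count for the master estimate (a positive definite subspace for $A+B$ must intersect the negative semidefinite subspace $U\cap W$ trivially) is exactly the right use of the variational characterization of inertia.
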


\begin{lemma}\label{lemma:inertia_neighbourhood_deletion}
Let $G$ be a connected graph of order $n\ge 2$, $u\in V(G)$ and $H=G-N[u]$. Then $n^+(G)\ge n^+(H)+1$ and $n^-(G)\ge n^-(H)+1$. 
\end{lemma}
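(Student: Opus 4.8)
The plan is to prove Lemma \ref{lemma:inertia_neighbourhood_deletion} by exhibiting an explicit perturbation of the adjacency matrix that isolates the contribution of the closed neighbourhood $N[u]$, and then applying the inertia-interlacing Lemma \ref{lemma:inertia-interlacing}. Order the vertices so that $u$ comes first, the remaining vertices of $N(u)$ come next, and the vertices of $H=G-N[u]$ come last. Then $A(G)$ has the block form
\[
A(G) = \begin{bmatrix} 0 & \mathbf{1}^{\top} & \mathbf{0}^{\top} \\ \mathbf{1} & B & C \\ \mathbf{0} & C^{\top} & A(H) \end{bmatrix},
\]
where $B=A(G[N(u)])$ and $C$ records the edges between $N(u)$ and $H$ (the top-right and bottom-left blocks are zero because $u$ has no neighbours in $H$). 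Write $A(G) = M + R$, where $M$ is the block-diagonal matrix $M = \begin{bmatrix} A_0 & 0 \\ 0 & A(H)\end{bmatrix}$ with $A_0$ the leading $(1+|N(u)|)\times(1+|N(u)|)$ principal submatrix (the part involving $u$ and its neighbours), and $R$ is the remaining ``cross'' perturbation supported only on the $C$, $C^{\top}$ blocks.

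The key observation is that $A_0$ already contributes a positive and a negative eigenvalue on its own: since $u$ is adjacent to at least one vertex (as $G$ is connected of order $n\ge 2$), the $2\times 2$ principal submatrix $\begin{bmatrix} 0 & 1 \\ 1 & 0\end{bmatrix}$ corresponding to $u$ and one neighbour has eigenvalues $\pm 1$, and by Cauchy interlacing $A_0$ satisfies $n^+(A_0)\ge 1$ and $n^-(A_0)\ge 1$. Hence $n^+(M)=n^+(A_0)+n^+(H)\ge n^+(H)+1$ and likewise $n^-(M)\ge n^-(H)+1$. The remaining step is to transfer this bound from $M$ to $A(G)=M+R$. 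Here the subtlety is that a general perturbation $R$ can destroy inertia via the lower bounds in Lemma \ref{lemma:inertia-interlacing}, so a direct application of that lemma to $M$ and $R$ is not enough.

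The cleaner route — and the step I expect to be the main obstacle — is to avoid splitting off $R$ and instead argue directly with Cauchy interlacing on $A(G)$ together with a congruence. I would use that $H=G-N[u]$ is obtained by deleting the $|N[u]|$ rows and columns indexed by $N[u]$, so $A(H)$ is a principal submatrix of $A(G)$; Cauchy interlacing then gives $n^+(G)\ge n^+(H)$ and $n^-(G)\ge n^-(H)$ immediately, but this is one short of what we want. To gain the extra $+1$ in each, the idea is to perform a congruence on $A(G)$ that block-diagonalizes the $u$-row against $A(H)$: because $u$ is adjacent to none of $H$, the vertex $u$ together with any fixed neighbour $w\in N(u)$ spans a $2$-dimensional subspace on which the quadratic form restricts (after clearing the interaction with the rest via symmetric row/column operations) to something with one positive and one negative direction, orthogonal in the congruence sense to the copy of $A(H)$ sitting in the $H$-block. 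Concretely, one shows $A(G)$ is congruent to a matrix containing $A(H)$ as a direct summand alongside a complementary block of size $|N[u]|$ that itself has at least one positive and one negative eigenvalue, by eliminating the off-diagonal couplings using the nonzero entry $A_{uw}=1$ as a pivot.

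Making the pivoting rigorous is the delicate part: one must verify that the symmetric Gaussian elimination steps used to decouple the $N[u]$-block from the $H$-block preserve the $A(H)$ summand (they do, since Sylvester's law of inertia is invariant under congruence, and the operations that clear the $C,C^{\top}$ coupling add multiples of $u$'s and $w$'s rows, which have zero entries in the $H$-columns only after the pivot structure is set up correctly). If this bookkeeping proves awkward, the safe fallback is the perturbation argument combined with a direct-sum comparison: one verifies that the specific $R$ here, being supported off the diagonal blocks touching $A(H)$, can be absorbed so that applying the lower bound $n^+(A+B)\ge n^+(A)-n^-(B)$ of Lemma \ref{lemma:inertia-interlacing} with a judiciously chosen split still yields the claimed $n^+(G)\ge n^+(H)+1$ and $n^-(G)\ge n^-(H)+1$. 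In either formulation the crux is ensuring the positive and negative directions contributed by the $u$-$w$ pair genuinely survive alongside the full inertia of $H$, rather than cancelling against it.
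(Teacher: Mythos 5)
Your first route (splitting $A(G)=M+R$ with $R$ the cross terms) you correctly abandon, since the lower bounds in Lemma~\ref{lemma:inertia-interlacing} can destroy the inertia you gained. The problem is that the route you then commit to rests on a claim that is false: $A(G)$ is \emph{not} in general congruent to a matrix having $A(H)$ as a direct summand with a complementary block of size $|N[u]|$. Take $G=P_4$ with vertices $w-u-v-h$ and delete $N[u]=\{w,u,v\}$: then $H$ is the single vertex $h$, so $A(H)=[0]$ has rank $0$, and your claimed congruence would force $\rank(A(G))\le 0+|N[u]|=3$, whereas $\rank(A(P_4))=4$. By Sylvester's law no such congruence exists. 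The obstruction is exactly the edges between $N(u)\setminus\{w\}$ and $V(H)$: symmetric elimination using the pivot $A_{uw}=1$ clears the couplings of rows $u$ and $w$ against $H$ (row $u$ is already zero on the $H$-columns), but it cannot clear the couplings of the remaining neighbours of $u$ against $H$ without modifying the $A(H)$ block. What the pivot on the $2\times 2$ block indexed by $\{u,w\}$ actually yields is $A(G)\cong A(K_2)\oplus S$, where $A(H)$ is a principal \emph{submatrix} of $S$ (the Schur-complement correction $ab^{\top}+ba^{\top}$ vanishes on $V(H)\times V(H)$ because $a$, the indicator of $N(u)\setminus\{w\}$, is supported off $V(H)$). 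You then still need one more application of Cauchy interlacing, $n^{\pm}(S)\ge n^{\pm}(H)$, to conclude $n^{\pm}(G)=1+n^{\pm}(S)\ge n^{\pm}(H)+1$. That final interlacing step is missing from your write-up, and without it the argument does not close.

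For comparison, the paper's proof avoids all of this bookkeeping by reordering the two moves: first pass to the induced subgraph $H'=G-(N(u)\setminus\{w\})$ for a fixed $w\in N(u)$, in which $u$ is a leaf with neighbour $w$ and $H'-\{u,w\}=H$; then Lemma~\ref{lemma:inertia_leaf} gives $n^{\pm}(H')=n^{\pm}(H)+1$, and Cauchy interlacing gives $n^{\pm}(G)\ge n^{\pm}(H')$. This is the same underlying idea as the corrected version of your pivot argument (the leaf lemma is itself proved by pivoting on the pendant edge), but doing the interlacing \emph{before} the pivot keeps $A(H)$ untouched and makes the proof two lines. I recommend either adopting that order of operations or, if you keep your congruence, replacing the direct-summand claim by the principal-submatrix statement plus the final interlacing step.
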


\begin{proof}
Let $w\in N(u)$. Consider the induced subgraph $H'=G-(N(u)\backslash \{w\})$ of $G$. Clearly, $u$ is a leaf with neighbour $w$ in $H'$ and $H'-\{u,w\} = H$. By Lemma \ref{lemma:inertia_leaf}, we have 
\[ n^+(H')\ge n^+(H)+1 \quad \text{and} \quad n^-(H')\ge n^-(H)+1.\]
By the Interlacing Theorem, we have $n^+(G)\ge n^+(H')$ and $n^-(G)\ge n^-(H')$. The proof is complete.
\end{proof}

\begin{theorem}\label{thm:max_degree_4}
Conjecture \ref{conj:inertia_main} holds for graphs with maximum degree four.
\end{theorem}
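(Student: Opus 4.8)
The plan is to split on the value of $n^-(G)$, handling the genuinely new case $n^-(G)=4$ by a neighbourhood-deletion argument and disposing of the other ranges with facts already recorded. By the additivity of signature over connected components (the disjoint-union reduction at the start of Section~\ref{section:special_families}), it suffices to treat connected $G$. If $n^-(G)\le 3$, Torga\v{s}ev's values in Table~\ref{table:Torgasev} give $n^+(G)\le 6$ and the conjecture holds outright. If $n^-(G)\ge 5$, then, setting aside the single graph $K_5$ (for which $n^+=1$), Brooks' theorem gives $\chi(G)\le 4$ because $\Delta(G)\le 4$; the corollary to Theorem~\ref{thm:inertia_fractional_chromatic} then yields $n^+(G)\le 3\,n^-(G)$, and $3\,n^-(G)\le \binom{n^-(G)+1}{2}$ precisely when $n^-(G)\ge 5$. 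So the whole theorem reduces to the case $n^-(G)=4$, where the target becomes $n^+(G)\le 10$.

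For this central case, fix any vertex $u$ and write $d=\deg(u)\le 4$. First apply Lemma~\ref{lemma:inertia_neighbourhood_deletion} to $H=G-N[u]$: it gives $n^-(H)\le n^-(G)-1=3$, so by Table~\ref{table:Torgasev} (after discarding twins and isolated vertices, which leave $n^+$ unchanged by Lemma~\ref{lemma:twin_inertia}) we obtain $n^+(H)\le 6$. The key step is then to delete only the \emph{open} neighbourhood: set $H'=G-N(u)$. In $H'$ the vertex $u$ has lost all its neighbours and is therefore isolated, so $n^+(H')=n^+(H'-u)=n^+(G-N[u])=n^+(H)\le 6$. Since $H'$ is obtained from $G$ by deleting exactly $d\le 4$ vertices, eigenvalue interlacing (as used in the proof of Lemma~\ref{lemma:inertia_neighbourhood_deletion}) gives $n^+(G)\le n^+(H')+d\le 6+4=10$, which is exactly $2\,n^+(G)\le n^-(G)\bigl(n^-(G)+1\bigr)$.

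The point needing care — and the reason the bound comes out sharp — is the interplay between the two deletions. Deleting the closed neighbourhood $N[u]$ is what drives $n^-$ down by one and hence pins $n^+(H)\le 6$ through Torga\v{s}ev's table; but deleting $N[u]$ directly would cost $d+1\le 5$ in interlacing and yield only the useless bound $n^+(G)\le 11$. Passing to the open neighbourhood $N(u)$ recovers the same value of $n^+$ (since $u$ becomes isolated) while spending one fewer deletion, and this single saved unit is what converts $11$ into the required $10$. I expect the only things to verify carefully are that the hypotheses of Lemma~\ref{lemma:inertia_neighbourhood_deletion} (connectedness and order at least two) hold throughout, and the bookkeeping in the $n^-(G)\ge 5$ branch ensuring that $K_5$, with $n^-=4$, is the lone exception to $\chi(G)\le 4$.
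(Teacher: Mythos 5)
Your proof is correct, and its overall skeleton matches the paper's: reduce to the case $n^-(G)=4$ via Brooks' theorem and the bound $n^+\le 3n^-$ from the corollary to Theorem~\ref{thm:inertia_fractional_chromatic}, then delete a closed neighbourhood $N[u]$, use Lemma~\ref{lemma:inertia_neighbourhood_deletion} to get $n^-(H)\le 3$, and invoke Table~\ref{table:Torgasev} (together with Lemma~\ref{lemma:twin_inertia}) to get $n^+(H)\le 6$. Where you genuinely diverge is the final counting step from $n^+(H)\le 6$ to $n^+(G)\le 10$. The paper writes $A(G)$ as $A(H)$ (padded) plus the matrix of the deleted edges, observes that those edges decompose into four stars centred at the vertices of $N(u)$, and applies the subadditivity of positive inertia from Lemma~\ref{lemma:inertia-interlacing} to charge $+1$ per star. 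You instead pass to $H'=G-N(u)$, note that $u$ is isolated there so $n^+(H')=n^+(H)$, and apply Cauchy interlacing to the $d\le 4$ deleted vertices, charging $+1$ per vertex. Both devices extract exactly the needed saving of one unit over the naive deletion of all of $N[u]$; yours is arguably more elementary, using only vertex-deletion interlacing rather than the additive inertia inequality, while the paper's edge-decomposition is the version that it reuses verbatim for planar graphs (five stars) and line graphs. Your side remarks are also sound: $K_5$, the only connected exception to Brooks with $\Delta=4$ and $\chi>4$, has $n^-=4$ and is absorbed by the central case, and the hypotheses of Lemma~\ref{lemma:inertia_neighbourhood_deletion} hold once one restricts to connected graphs.
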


\begin{proof}
Let $G$ be a graph with maximum degree $\Delta(G)\le 4$. By Brooks' Theorem and the discussion above, we only need to consider the case $n^-(G)=4$. Let $u\in V(G)$, and $H=G-N[u]$. By Lemma \ref{lemma:inertia_neighbourhood_deletion}, we have $n^-(H)\le 3$. By Table \ref{table:Torgasev}, we get $n^+(H)\le 6$. Define $F$ to be the subgraph of $G$ induced by the edges $E(G)\backslash E(H)$. Clearly, $N[u]\subseteq V(F)$. Observe that $E(F)$ can be decomposed into at most four stars centred at the vertices in $N(u)$. By Lemma \ref{lemma:inertia-interlacing}, we see that 
\[ n^+(G)\le n^+(H) + n^+(F)\le 6 + 4 = 10.\]
Since, $n^-(G)=4$, we have $n^-(G)\left(n^-(G)+1\right)=20$, and Conjecture \ref{conj:inertia_main} holds for $G$.
\end{proof}

Next, we verify Conjecture \ref{conj:inertia_main} for planar graphs. We require the following fact.

\begin{proposition}\label{prop:planar_negative_3}
    If $G$ is a planar graph with $n^-(G)\le 3$, then $n^+(G)\le 5$.
\end{proposition}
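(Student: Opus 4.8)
The plan is to reduce to reduced graphs and then invoke Torga\v{s}ev's classification from Table~\ref{table:Torgasev}; the whole point will be that the two extremal graphs attaining $n^-=3$ and $n^+=6$ are both non-planar. First I would remove twins and isolated vertices: by Lemma~\ref{lemma:twin_inertia} deleting a twin changes neither $n^+$ nor $n^-$, deleting an isolated vertex changes only $n^0$, and vertex deletion preserves planarity, so I may assume that $G$ is reduced and planar. If $n^-(G)\le 2$, then Table~\ref{table:Torgasev} gives $n^+(G)\le 3\le 5$ at once, so the only remaining case is $n^-(G)=3$.

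For $n^-(G)=3$, Table~\ref{table:Torgasev} gives $n^+(G)\le 6$, and by the remark following that table the only reduced graphs attaining $n^+=6$ are the graphs $H_1$ and $H_2$ of Figure~\ref{fig:example-euality-graphs}. Hence it suffices to prove that neither $H_1$ nor $H_2$ is planar: if so, a reduced planar graph with $n^-=3$ cannot be $H_1$ or $H_2$ and therefore has $n^+\le 5$, which is exactly the claim.

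The crux is thus to exhibit a Kuratowski subgraph in each of $H_1$ and $H_2$. In $H_1$, whose vertices split into an outer $6$-cycle $0,1,2,3,4,5$ and an inner triangle on $\{6,7,8\}$ with $6\sim 0,3$, $7\sim 1,4$ and $8\sim 2,5$, I would use the bipartition $\{0,2,4\}\cup\{1,3,5\}$: six of the nine required connections are already cycle edges, and the three remaining ones are realized by the internally disjoint paths $(0,6,3)$, $(2,8,5)$, $(4,7,1)$, yielding a subdivision of $K_{3,3}$. In $H_2$, I would take the five degree-four vertices $\{2,3,4,6,7\}$ as branch vertices; the pairs $\{2,3\},\{2,4\},\{3,4\},\{3,6\},\{3,7\},\{6,7\}$ are edges, and the four remaining pairs $\{2,6\},\{2,7\},\{4,6\},\{4,7\}$ are supplied by the internally disjoint paths $(2,1,6)$, $(2,5,7)$, $(4,8,6)$, $(4,0,7)$, yielding a subdivision of $K_5$. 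In both cases Kuratowski's theorem gives non-planarity.

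The only genuine obstacle is the casework of locating these forbidden subgraphs, since planarity cannot be ruled out by edge counts alone: $H_1$ and $H_2$ have $9$ vertices and only $15$ and $16$ edges, well below the planar bound $3n-6=21$. Once the subdivisions are exhibited the proposition follows immediately. One should also confirm that the uniqueness asserted in the remark concerns reduced graphs, which is precisely the setting of Torga\v{s}ev's theorem, so that the initial reduction to the reduced case is legitimate.
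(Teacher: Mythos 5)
Your proof is correct, but it takes a genuinely different route from the paper. The paper's proof also reduces to the reduced case, but then invokes Torga\v{s}ev's Corollary~1 stating that every reduced graph with $n^-\le 3$ is an induced subgraph of one of 32 explicitly listed graphs of orders $9$ through $14$; it then checks by computer that the listed graphs of order at least $10$ have $n^+\le 5$ (so their induced subgraphs do too, by interlacing), and separately verifies by computer that all planar graphs of order at most $9$ with $n^-\le 3$ have $n^+\le 5$. You instead use only the extremal statement recorded in the Remark after Table~\ref{table:Torgasev} --- that $n^+\le 6$ for reduced graphs with $n^-=3$, with equality exactly for $H_1$ and $H_2$ --- and then rule out planarity of $H_1$ and $H_2$ by hand via explicit $K_{3,3}$ and $K_5$ subdivisions; I checked both subdivisions against the adjacency lists in Figure~\ref{fig:example-euality-graphs} and they are valid, as is your reduction to reduced graphs (twin and isolated-vertex deletion preserves $n^+$ and $n^-$ by Lemma~\ref{lemma:twin_inertia} and preserves planarity since it passes to an induced subgraph). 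What your approach buys is a computer-free, self-contained argument modulo the uniqueness claim in the Remark; what the paper's approach buys is that it leans only on Torga\v{s}ev's published table of maximal graphs rather than on the sharper uniqueness characterization of the equality case, at the cost of two computational verifications.
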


\begin{proof}
Let $G$ be a planar graph with $n^-(G)\le 3$. We can further assume that $G$ is reduced. By a result of Torga\v{s}ev \cite[Corollary 1]{Torgasev_three_neg_1989}, $G$ is an induced subgraph of one of the 32 graphs in Table 2 of \cite{Torgasev_three_neg_1989}. These graphs have orders in $\{9, 10, 11, 12, 13, 14\}$. We verified that all the graphs of order at least 10 in the table have $n^+\le 5$, and so their induced subgraphs also have $n^+\le 5$ by the Interlacing Theorem. We verified by computer\footnote{We used the database \href{https://users.cecs.anu.edu.au/~bdm/data/graphs.html}{here} to obtain all planar graphs of order up to 9.} that all planar graphs of order at most 9 with \( n^- \leq 3 \) satisfy \( n^+ \leq 5 \). The assertion follows.
\end{proof}

\begin{theorem}
Conjecture \ref{conj:inertia_main} holds for planar graphs.
\end{theorem}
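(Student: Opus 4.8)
The plan is to follow the same template as the proof of Theorem \ref{thm:max_degree_4}, exploiting the fact that planar graphs, although unbounded in maximum degree, always contain a low-degree vertex. First I would dispose of the easy ranges of $n^-(G)$. Since every planar graph is $4$-colourable by the Four Colour Theorem, the corollary to Theorem \ref{thm:inertia_fractional_chromatic} gives $n^+(G) \le 3\,n^-(G)$; hence $2n^+(G) \le 6\,n^-(G) \le n^-(G)\bigl(n^-(G)+1\bigr)$ whenever $n^-(G) \ge 5$. For $n^-(G) \le 3$, Proposition \ref{prop:planar_negative_3} yields $n^+(G) \le 5$, and $2\cdot 5 = 10 \le n^-(G)\bigl(n^-(G)+1\bigr)$ already holds at $n^-(G)=3$ (the smaller cases are covered by Table \ref{table:Torgasev}). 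Thus the entire problem reduces to the single case $n^-(G) = 4$, where the target becomes $n^+(G) \le 10$. Using the disjoint-union reduction of Section \ref{section:special_families}, I may further assume $G$ is connected.

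To bound $n^+(G)$ when $n^-(G)=4$, recall that every simple planar graph has a vertex of degree at most $5$, since $2m(G) \le 6n(G)-12$. Pick such a vertex $u$, so $|N(u)| \le 5$, and set $H = G - N[u]$. By Lemma \ref{lemma:inertia_neighbourhood_deletion}, $n^-(H) \le n^-(G) - 1 = 3$; and since $H$ is an induced subgraph of a planar graph it is itself planar, so Proposition \ref{prop:planar_negative_3} gives $n^+(H) \le 5$. This is the step that carries the real content, and it is exactly where the planarity of $G$ (inherited by $H$) is used in place of the degree constraint of Theorem \ref{thm:max_degree_4}.

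The remaining edges are handled exactly as before. Writing $A(G) = A(H) + A(F)$, where $F$ collects the edges of $E(G) \setminus E(H)$ (equivalently, all edges incident to $N[u]$), I observe that every such edge meets $N(u)$: an edge avoiding $N(u)$ would either be incident to $u$, forcing its other end into $N(u)$, or lie entirely in $V(H)$, placing it in $E(H)$. Hence $E(F)$ partitions into at most $|N(u)| \le 5$ stars centred at the vertices of $N(u)$, each contributing $n^+ = 1$, so iterating Lemma \ref{lemma:inertia-interlacing}$(i)$ gives $n^+(F) \le 5$. Combining,
\[ n^+(G) = n^+\bigl(A(H)+A(F)\bigr) \le n^+(H) + n^+(F) \le 5 + 5 = 10, \]
which is precisely $\tfrac12\,n^-(G)\bigl(n^-(G)+1\bigr)$, and the conjecture follows.

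The genuine difficulty has already been absorbed into Proposition \ref{prop:planar_negative_3}, whose proof rests on Torga\v{s}ev's classification of reduced graphs with $n^- \le 3$ together with a finite computer check. Given that input, the only delicate point is bookkeeping: the degree-$5$ bound on $|N(u)|$ and the $n^+(H)\le 5$ bound each contribute exactly half of the available budget, so the argument is tight and leaves no slack in either ingredient. I expect no further obstacle, although one should confirm the reduction to connected graphs, since Lemma \ref{lemma:inertia_neighbourhood_deletion} requires a connected host of order at least $2$.
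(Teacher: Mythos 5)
Your proposal is correct and follows essentially the same route as the paper: reduce to the case $n^-(G)=4$ via the Four Colour Theorem and the chromatic-number bound, pick a vertex of degree at most $5$, delete its closed neighbourhood, apply Lemma \ref{lemma:inertia_neighbourhood_deletion} and Proposition \ref{prop:planar_negative_3} to the remainder, and absorb the removed edges as at most five stars via Lemma \ref{lemma:inertia-interlacing}. The only (immaterial) difference is that you dispose of $n^-(G)\le 3$ using Proposition \ref{prop:planar_negative_3} where the paper cites Table \ref{table:Torgasev}.
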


\begin{proof}
 By the Four Color Theorem and the discussion above, we only need to consider a planar graph $G$ with $n^-(G)=4$. Since $G$ is planar, there exists $u\in V(G)$ such that $\deg(u)\le 5$. Let $H=G-N[u]$. By Lemma \ref{lemma:inertia_neighbourhood_deletion}, we have $n^-(H)\le 3$. By Proposition \ref{prop:planar_negative_3}, $n^+(H)\le 5$. As in the proof of Theorem \ref{thm:max_degree_4}, let $F$ be the subgraph of $G$ induced by the edges $E(G)\backslash E(H)$. Clearly, $E(F)$ can be decomposed into at most five stars centred at the vertices in $N(u)$. Now, by Lemma \ref{lemma:inertia-interlacing}, we see that 
 \[ n^+(G)\le n^+(H) + n^+(F)\le 5 + 5 = 10.\]
As $n^-(G)\left(n^-(G)+1\right)=20$, Conjecture \ref{conj:inertia_main} holds for $G$.
\end{proof}

\subsection{Line graphs}

Here, we verify Conjecture \ref{conj:inertia_main} for line graphs.

Let $G$ be a graph of order $n$, size $m$, and line graph $L(G)$. Let $\mathcal{L}(G)$ and $\mathcal{Q}(G)$ denote its Laplacian and signless Laplacian matrix. It is well-known that if $\theta_1\ge \cdots \ge \theta_n$ are the eigenvalues of $\mathcal{Q}(G)$, then $\theta_i - 2$ are the eigenvalues of $A(L(G))$ with remaining eigenvalues equal to $-2$, refer \cite[Chapter 1]{Brouwer_Haemers_book}. In particular, the least eigenvalue $\lambda_{\min}(L(G))\ge -2$ and has multiplicity at least $m-n$.

We first prove the following lemma. Recall that for a graph $G$ of order $n$, the \emph{energy} of $G$ is defined as $\mathcal{E}(G)=\sum_{i=1}^n |\lambda_i(G)|$. 

\begin{lemma}\label{lemma:inertia_least_eigenvalue}
For any graph $G$ of order $n$, we have
    \[ n^+(G) \leq n^-(G)\left(2|\lambda_n(G)|-1\right).\]
Moreover, if $\lambda_1(G) \geq 3.3$, then
\[n^+(G) \leq n^-(G)\left(2|\lambda_n(G)|-1\right) - 1.1.\]    
\end{lemma}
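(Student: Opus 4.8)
The plan is to route the entire argument through the graph energy $\mathcal{E}(G)$, bounding it above by the negative part of the spectrum and below by the rank. For the upper bound, note that since $\operatorname{tr} A(G)=0$ we have $\sum_{\lambda_i>0}\lambda_i=\sum_{\lambda_i<0}|\lambda_i|$, whence $\mathcal{E}(G)=2\sum_{\lambda_i<0}|\lambda_i|$; as every negative eigenvalue has absolute value at most $|\lambda_n(G)|$, this yields
\[ \mathcal{E}(G)\le 2\,n^-(G)\,|\lambda_n(G)|. \]
We may assume $G$ has an edge, so $\lambda_n(G)<0$ (otherwise $n^+(G)=0$ and there is nothing to prove).

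The heart of the matter is the complementary lower bound $\mathcal{E}(G)\ge \rank(G)$. The key observation is that the product of the nonzero eigenvalues of $A(G)$ is a nonzero integer: writing $r=\rank(G)$, this product equals, up to sign, the coefficient of $x^{n-r}$ in $\det(xI-A(G))$, which is an integer combination of the $r\times r$ principal minors of the integer matrix $A(G)$, and it is nonzero because it is a product of nonzero reals. Hence $\prod_{\lambda_i\ne 0}|\lambda_i|\ge 1$, and applying the AM--GM inequality to the $r$ nonzero eigenvalues gives
\[ \frac{\mathcal{E}(G)}{r}=\frac{1}{r}\sum_{\lambda_i\ne 0}|\lambda_i|\ge\Big(\prod_{\lambda_i\ne 0}|\lambda_i|\Big)^{1/r}\ge 1, \]
so that $\mathcal{E}(G)\ge r=n^+(G)+n^-(G)$. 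Combining the two bounds gives $n^+(G)+n^-(G)\le 2\,n^-(G)\,|\lambda_n(G)|$, which rearranges to the claimed $n^+(G)\le n^-(G)(2|\lambda_n(G)|-1)$.

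For the refined bound I would sharpen the AM--GM step by isolating $\lambda_1(G)$. Since the full product of nonzero eigenvalues has absolute value at least $1$, the remaining $r-1$ nonzero eigenvalues have absolute-value product at least $1/\lambda_1(G)$, so AM--GM applied to those $r-1$ values gives
\[ \mathcal{E}(G)\ge\lambda_1(G)+(r-1)\,\lambda_1(G)^{-1/(r-1)}. \]
Using the elementary estimate $e^{-t}\ge 1-t$ with $t=\tfrac{\ln\lambda_1(G)}{r-1}$ converts this into $\mathcal{E}(G)\ge r+\big(\lambda_1(G)-1-\ln\lambda_1(G)\big)$. The map $t\mapsto t-1-\ln t$ is increasing for $t>1$ and exceeds $1.1$ already at $t=3.3$, so $\lambda_1(G)\ge 3.3$ forces $\mathcal{E}(G)\ge\rank(G)+1.1$; feeding this back into the upper bound produces the extra $-1.1$.

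The main obstacle is the integrality-and-nonvanishing of the product of the nonzero eigenvalues, which is exactly what powers $\mathcal{E}(G)\ge\rank(G)$; once that is in hand, everything else is the trace-zero identity, AM--GM, and a one-variable estimate. I would also verify the degenerate case $r\le 1$, which cannot occur for a graph with an edge (a zero-trace symmetric matrix of rank one is impossible), so that the split into $r-1$ terms in the refined step is always legitimate.
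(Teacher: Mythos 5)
Your proposal is correct and follows essentially the same route as the paper: both hinge on the lower bound $\mathcal{E}(G)\ge n^+(G)+n^-(G)$ obtained from the fact that the product of the nonzero eigenvalues is a nonzero integer, combined with the trace-zero identity $\mathcal{E}(G)=2\sum_{\lambda_i<0}|\lambda_i|\le 2|\lambda_n(G)|\,n^-(G)$, and both extract the extra $1.1$ by exploiting the slack contributed by $\lambda_1(G)\ge 3.3$. Your AM--GM step is just the paper's termwise inequality $x\ge \ln x+1$ in disguise, so the arguments are equivalent.
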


\begin{proof}
Observe that for any $x\in \mathbb{R}^+$ we have $x \geq \ln x +1$. Now, 
\[
    \mathcal{E}(G) = \sum_{i=1}^n |\lambda_i(G)| \geq n^+(G)  + n^-(G) + \ln \left(\prod_{\lambda_i(G)\neq 0} |\lambda_i(G)|\right).
\]
Note that $\prod_{\lambda_i(G)\neq 0} |\lambda_i(G)|$ is the first non-zero coefficient of the characteristic polynomial of $G$. Hence $\prod_{\lambda_i(G)\neq 0} |\lambda_i(G)| \geq 1$ and therefore 
\[\mathcal{E}(G) \geq n^+(G)  + n^-(G).\]
Since the trace of the adjacency matrix $A(G)$ is zero, we know
\[
    \mathcal{E}(G) = 2\sum_{\lambda_i(G) < 0}|\lambda_i(G)| \leq 2|\lambda_{n}(G)|n^-(G).
\]
Combining the above inequalities for $\mathcal{E}(G)$ gives the first inequality in the assertion. 

Now, if $x\in \mathbb{R}$ is such that $x \geq 3.3$, then $x \geq \ln x + 2.1$. So if $\lambda_1(G) \ge 3.3$, then following the same steps as above gives the second inequality in the assertion.
\end{proof}

We note that the inequality $\mathcal{E}(G) \geq n^+(G)  + n^-(G)$ was first observed in \cite{fajtlowicz1987} (cf. \cite{Akbari_Ghorbani_Zare_2009}).

\begin{theorem}\label{thm:line_graphs}
For any graph $G$ with line graph $L(G)$, we have 
    \[ n^+(L(G))\le \min \left\{3n^-(L(G)),\, \frac{1}{2}n^-(L(G))\left(n^-(L(G))+1\right)\right\}.\]
\end{theorem}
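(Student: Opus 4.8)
The plan is to establish the two bounds in the minimum separately, treating the linear bound $n^+(L(G)) \le 3n^-(L(G))$ as the easy ingredient and the quadratic bound (which is Conjecture~\ref{conj:inertia_main} specialised to line graphs) as the substantive one. For the linear bound I would apply Lemma~\ref{lemma:inertia_least_eigenvalue} directly to the graph $L(G)$. If $L(G)$ has no edges the claim is trivial, so assume otherwise; then $\lambda_n(L(G)) \in [-2,0)$, since $L(G)$ has zero trace together with a positive eigenvalue, while $\lambda_{\min}(L(G)) \ge -2$ holds for every line graph. Hence $|\lambda_n(L(G))| \le 2$, and the lemma yields $n^+(L(G)) \le n^-(L(G))(2\cdot 2 - 1) = 3 n^-(L(G))$.

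For the quadratic bound I would first reduce to the connected case: since $L(G_1 \sqcup G_2) = L(G_1)\sqcup L(G_2)$ and the bound is additive over connected components in the manner already used in Section~\ref{section:special_families}, it suffices to assume $G$, and hence $L(G)$, connected. I would then split on $n^-(L(G))$. When $n^-(L(G)) \le 3$ the quadratic bound holds by Torga\v{s}ev's values in Table~\ref{table:Torgasev}. When $n^-(L(G)) \ge 5$ the linear bound already suffices, because $3k \le \tfrac12 k(k+1)$ precisely when $k \ge 5$. This isolates the single remaining case $n^-(L(G)) = 4$, where the target is $n^+(L(G)) \le 10$.

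The hard part is exactly this case: the linear bound delivers only $n^+(L(G)) \le 12$, two short of $10$, so a structural argument is needed. I would run a dichotomy on the minimum degree $\delta(L(G))$. If $\delta(L(G)) \ge 5$, then $\lambda_1(L(G)) \ge \delta(L(G)) \ge 5 > 3.3$, so the ``moreover'' clause of Lemma~\ref{lemma:inertia_least_eigenvalue} applies and gives $n^+(L(G)) \le 4\cdot 3 - 1.1 = 10.9$, whence $n^+(L(G)) \le 10$ by integrality. If instead $\delta(L(G)) \le 4$, I would pick a vertex $u$ of $L(G)$ with $\deg(u) \le 4$ and mimic the neighbourhood-deletion argument of Theorem~\ref{thm:max_degree_4}: setting $H = L(G) - N[u]$, Lemma~\ref{lemma:inertia_neighbourhood_deletion} gives $n^-(H) \le 3$, hence $n^+(H) \le 6$ by Table~\ref{table:Torgasev}; and writing $F$ for the subgraph carrying the edges incident to $N[u]$, every such edge meets $N(u)$, so $E(F)$ decomposes into $\deg(u) \le 4$ stars, giving $n^+(F) \le 4$ via Lemma~\ref{lemma:inertia-interlacing}. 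That same lemma then yields $n^+(L(G)) \le n^+(H) + n^+(F) \le 6 + 4 = 10$.

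The two regimes exhaust all connected line graphs with $n^-(L(G)) = 4$, which completes the argument. I expect the main obstacle to be organising this $n^-=4$ case cleanly: one must check that the degree dichotomy genuinely covers every graph (the boundary $\delta(L(G)) = 4$ falling into the neighbourhood-deletion branch), that the star decomposition produces at most $\deg(u)$ stars each contributing a single positive eigenvalue, and that the neighbourhood-deletion lemma is legitimately applied (inside a component, after the reduction to connected $G$). The point worth flagging is that Table~\ref{table:Torgasev} is invoked only for $n^- \le 3$, never for $n^- = 4$, whose extremal value $n^+(4)$ is not tabulated; it is precisely this gap that forces the structural dichotomy rather than a direct appeal to Torga\v{s}ev.
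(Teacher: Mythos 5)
Your proof is correct, and while the outer skeleton (linear bound from Lemma \ref{lemma:inertia_least_eigenvalue}, reduction to $n^-(L(G))=4$ via Table \ref{table:Torgasev} and the inequality $3k\le \tfrac12 k(k+1)$ for $k\ge 5$) matches the paper, your handling of the critical case $n^-(L(G))=4$ is genuinely different. The paper splits on $\lambda_1(L(G))\ge 3.3$ versus $\lambda_1(L(G))<3.3$, and in the second branch it exploits line-graph structure: the clique $K_{\Delta(G)}$ forces $\Delta(G)\le 4$, the multiplicity of $\lambda_{\min}$ being at least $m-n$ forces $m\le n+4$, and a degree-sum count then produces a vertex $u$ of $G$ with $\deg_G(u)\le 2$, hence an edge $e$ with $\deg_{L(G)}(e)\le 4$ to which the neighbourhood-deletion argument is applied; this counting only works for $n\ge 9$, so the paper verifies $n\le 8$ by computer. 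You instead split on $\delta(L(G))$: if $\delta(L(G))\ge 5$ then $\lambda_1(L(G))\ge \delta(L(G))\ge 5>3.3$ (largest eigenvalue at least average, hence minimum, degree) and the ``moreover'' clause gives $n^+\le 10.9$, while if $\delta(L(G))\le 4$ a low-degree vertex is available immediately for the neighbourhood-deletion step. This buys two things: you dispense with the computer check for small $n$ entirely, and your argument never uses any property of $L(G)$ beyond $\lambda_{\min}(L(G))\ge -2$, so it actually establishes Conjecture \ref{conj:inertia_main} (together with the bound $n^+\le 3n^-$) for the larger class of all graphs with smallest eigenvalue at least $-2$, e.g.\ generalized line graphs. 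The paper's route, by contrast, extracts slightly more line-graph-specific information but yields only the stated result. Your flagged checkpoints (the dichotomy covering $\delta(L(G))=4$, the star decomposition into $\deg(u)$ stars each with one positive eigenvalue, and connectivity for Lemma \ref{lemma:inertia_neighbourhood_deletion}) all go through as you describe.
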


\begin{proof} Let $G$ be a connected graph of order $n$. If $n\le 8$, then the assertion is verified using a computer. So we assume $n\ge 9$. Throughout this proof, we denote $n^+(L(G))$ and $n^-(L(G))$ by $n^+$ and $n^-$, respectively. 

Using the first inequality in Lemma \ref{lemma:inertia_least_eigenvalue}, we get $n^+\le 3n^-$ since $|\lambda_{\min}(L(G))|\le 2$.  If $n^-\ge 5$, then $3n^- \le \frac{1}{2}n^-(n^-+1)$ and the assertion follows. If $n^-\le 3$, we are done by Table \ref{table:Torgasev}. 

So, assume that $n^- = 4$. We need to show that $n^+\le 10$. We consider the following cases:

\textbf{Case 1:} $\lambda_1(L(G)) \geq 3.3$. ~ Then using the second inequality in Lemma \ref{lemma:inertia_least_eigenvalue} and noting that $n^+$ is an integer, we get $n^+ \leq 3n^- - 2 = 10.$ 

\textbf{Case 2:} $\lambda_1(L(G)) < 3.3$. ~ Since $K_{\Delta(G)}$ is an induced subgraph of $L(G)$, we have $\lambda_1(L(G))\ge \Delta(G)-1$ by the Interlacing Theorem. It follows that $\Delta(G) \leq 4$. 

Since $n^- = 4$ and the least eigenvalue $\lambda_{\min}(L(G))$ has multiplicity at least $m-n$, we have $m\le n^- + n = 4 + n$. If $\deg(u)\ge 3$ for all $u\in V(G)$, then 
\[n+4 \ge m = \frac{1}{2}\sum_{u\in V(G)}\deg(u)\ge \frac{3n}{2},\] 
implying $n\le 8$, a contradiction. So let $u\in V(G)$ be such that $\deg(u)\le 2$ and let $v\in N(u)$. Then $\deg(u)+\deg(v)\le 2 + 4 = 6$. It follows that $\deg_{L(G)}(e)\le 4$, where $e=uv$. Let $H=L(G) - N_{L(G)}[e]$. Using Lemma \ref{lemma:inertia_neighbourhood_deletion}, we get $n^-(H)\le 3$. By Table \ref{table:Torgasev}, we get $n^+(H)\le 6$. As in the proof of Theorem \ref{thm:max_degree_4}, using Lemma \ref{lemma:inertia-interlacing}, we get
\[ n^+\le n^+(H) + 4 = 10. \qedhere\]  
\end{proof}

Computational investigation suggests that Theorem \ref{thm:line_graphs} can be further improved.

\begin{conjecture}\label{conj:line_graph}
For any connected graph \( G \), with line graph \( L(G) \),
    \[
    n^+(L(G)) \leq n^-(L(G)) + 1.
    \]
Equivalently, $s(L(G))\le 1$.    
\end{conjecture}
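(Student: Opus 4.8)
The plan is to first recast the inequality in terms of the signless Laplacian. Let $B$ be the $n\times m$ vertex--edge incidence matrix of $G$, so that $B^\top B = A(L(G)) + 2I_m$ and $BB^\top = \mathcal{Q}(G)$. Since $B^\top B$ and $BB^\top$ share their nonzero spectrum, the eigenvalues of $A(L(G))$ are $\theta_i - 2$ for $i = 1, \dots, n$ together with $-2$ of multiplicity $m-n$, where $\theta_1 \ge \dots \ge \theta_n \ge 0$ are the eigenvalues of $\mathcal{Q}(G)$. Writing $c(G) = m - n + 1$ for the cyclomatic number, a direct count gives, for $m \ge n$,
\[
n^+(L(G)) = \#\{i : \theta_i > 2\}, \qquad n^-(L(G)) = \#\{i : \theta_i < 2\} + (m-n),
\]
so that Conjecture~\ref{conj:line_graph} is \emph{equivalent} to
\[
s\bigl(\mathcal{Q}(G) - 2I\bigr) := \#\{i : \theta_i > 2\} - \#\{i : \theta_i < 2\} \ \le\ c(G).
\]
(The cases $m < n$ are forests, which I would treat as base cases.) This reformulation is attractive for two reasons: the threshold $2$ is exactly the \emph{mean} eigenvalue of $B^\top B$, since $\operatorname{tr}(B^\top B) = \sum_v \deg(v) = 2m$, so the statement asserts that the spectrum of $B^\top B$ has at most one more eigenvalue above its mean than below it; and the bound is sharp at $G = C_5$, where $s(\mathcal{Q}(C_5) - 2I) = 1 = c(C_5)$.

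Next I would establish the base cases and an inductive scaffold. For forests one checks directly that $s(\mathcal{Q}(G)-2I) \le 0$: since trees are bipartite, $\mathcal{Q}(G)$ and the Laplacian are cospectral, and a leaf-removal induction in the spirit of Lemma~\ref{lemma:inertia_leaf} reduces matters to counting Laplacian eigenvalues above and below $2$. For graphs with $n^-(L(G)) \le 3$ the conjecture already follows from Table~\ref{table:Torgasev}. For the inductive step I would build $G$ from a spanning tree by adding its $c(G)$ non-tree edges one at a time. Adding an edge $uv$ replaces $\mathcal{Q}(G)$ by $\mathcal{Q}(G) + (e_u+e_v)(e_u+e_v)^\top$, a rank-one positive semidefinite perturbation that raises $c(G)$ by exactly $1$; equivalently, on the line-graph side it adjoins a single vertex to $L(G)$, so $s(L(G))$ changes by at most $1$ by Lemma~\ref{lemma:rank_vertex_deleted_subgraph}. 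The natural potential to track is $f(G) = s(\mathcal{Q}(G) - 2I) - c(G)$, which equals $s(L(G)) - 1$ and which I want to keep $\le 0$.

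The main obstacle is that $f$ is \emph{not} monotone under edge addition. A rank-one positive semidefinite update can raise the signature $s(\mathcal{Q} - 2I)$ by $2$ --- precisely when a single eigenvalue of $\mathcal{Q}$ crosses the threshold $2$ from below --- so $f$ can increase by $1$ in a single step. This is not a defect of the estimate but a genuine feature: it is exactly what happens in the transition $P_5 \to C_5$, where $s(\mathcal{Q} - 2I)$ jumps from $-1$ to $1$ while $c$ rises from $0$ to $1$. Consequently the heart of the argument must be a statement of the form: \emph{when $f(G) = 0$ (that is, $G$ already sits at the ceiling $s(L(G)) = 1$), no further eigenvalue of $\mathcal{Q}$ can cross $2$ upon adding an edge}, so that the signature rises by at most $1$ and $f$ stays $\le 0$. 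By the matrix determinant lemma the crossing is governed by the sign of $1 + (e_u+e_v)^\top(\mathcal{Q}(G) - 2I)^{-1}(e_u+e_v)$, a resolvent (Green's-function) quantity at the threshold, and the crossing occurs iff this is negative. The crux --- and the step I expect to be hardest --- is to relate this analytic crossing condition to the combinatorial potential $f(G)$, showing that a crossing forces $f(G) < 0$ beforehand. I would attempt this through the structure of the eigenvectors of $\mathcal{Q}$ at eigenvalues near $2$ (equivalently, near-kernel vectors of $A(L(G))$), possibly restricting first to $2$-connected $G$ via a cut-vertex reduction and inducting along an ear decomposition, where each added ear raises $c(G)$ in a controlled way.

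As intermediate targets that already go beyond Table~\ref{table:Torgasev}, I would aim to settle the conjecture for line graphs of trees (whose blocks are cliques), and for unicyclic and bicyclic base graphs, where $c(G)$ is small and the potential can rise only once or twice, so the crossing analysis becomes finite. For graphs of bounded maximum degree, Lemma~\ref{lemma:inertia-interlacing} together with the neighbourhood-deletion bound of Lemma~\ref{lemma:inertia_neighbourhood_deletion} gives direct control on $n^+(L(G))$ and should yield the conjecture outright. A successful resolution of the threshold-crossing lemma for $2$-connected graphs would then combine with these base cases to give the full statement.
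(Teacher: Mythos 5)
The statement you are addressing is a \emph{conjecture} in the paper, not a theorem: the authors explicitly leave it open and only establish special cases (line graphs of trees, via Guo's result that a tree of order $n$ has at least $\lceil n/2\rceil$ Laplacian eigenvalues in $[0,2)$, and the trivial dense case $m\ge 2n-1$, where $n^+(L(G))\le n\le m-n+1\le n^-(L(G))+1$). So there is no proof in the paper to compare against, and your proposal is not a proof either --- it is a research plan whose central step is left unestablished. Your reformulation is correct and essentially the standard one: using $B^\top B=A(L(G))+2I$ and $BB^\top=\mathcal{Q}(G)$, the conjecture is equivalent to $\#\{i:\theta_i>2\}-\#\{i:\theta_i<2\}\le c(G)$, and this already subsumes the paper's dense case. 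Your proposed base cases (forests via bipartiteness of trees, $n^-(L(G))\le 3$ via Table~\ref{table:Torgasev}) are sound and match what the paper does.

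The genuine gap is the ``threshold-crossing lemma'': the claim that when $s(L(G))=1$, adding an edge cannot push an eigenvalue of $\mathcal{Q}(G)$ across $2$ from below. As you yourself note, the potential $f(G)=s(L(G))-1$ is not monotone under edge addition (the transition $P_5\to C_5$ shows a genuine jump), so everything hinges on this lemma --- but the lemma is essentially a restatement of the conjecture along one particular edge-by-edge filtration, and you give no argument for it beyond identifying the resolvent quantity $1+(e_u+e_v)^\top(\mathcal{Q}(G)-2I)^{-1}(e_u+e_v)$ that controls the crossing. (Note also that $\mathcal{Q}(G)-2I$ need not be invertible, since $2$ can be a signless Laplacian eigenvalue, so even the stated crossing criterion needs care.) Relating the sign of that resolvent to the combinatorial condition $s(L(G))=1$ is exactly the open content of the problem; nothing in your outline, nor in the paper's toolkit (Lemmas~\ref{lemma:rank_vertex_deleted_subgraph}, \ref{lemma:inertia-interlacing}, \ref{lemma:inertia_neighbourhood_deletion}), supplies it. Your plan is a reasonable way to organize an attack, and correctly isolates where the difficulty lives, but as it stands it proves only the same special cases the paper already records.
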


We have tested this conjecture on numerous graphs with $m(G) = n(L(G)) \le 100$ using the LineGraph function in Wolfram Mathematica and found no counterexample. In addition, no counterexample was found for any graph \( G \) with at most 9 vertices. Conjecture \ref{conj:line_graph} is tight, for example, for odd cycle $C_{4k+1}$ whenever $k\ge 1$, Kayak Paddle graph\footnote{See \href{https://mathworld.wolfram.com/KayakPaddleGraph.html}{KayakPaddle} at Wolfram MathWorld} $K(4,5,1)$ and $K(5,5,1)$, and the 5-prism shown in Figure \ref{fig:5_prism}. We require that the line graph be connected; otherwise, $C_5 \cup C_5$ is a disconnected counterexample. 

\begin{figure}
    \centering
 \begin{tikzpicture}[scale = 0.65]
\fill[line width=1pt, fill opacity = 0] (-1,0) -- (1,0) -- (1.6180339887498947,1.9021130325903064) -- (0,3.0776835371752522) -- (-1.6180339887498945,1.9021130325903073) -- cycle;
\fill[line width=1pt, fill opacity = 0] (-1.8,-1) -- (1.8,-1.02) -- (2.9314823100757135,2.3976231187750523) -- (0.030776835371753394,4.529830366915455) -- (-2.893440049423907,2.4299837985500523) -- cycle;
\draw [line width=1pt] 
(-1,0)-- (1,0)
(1,0)--(1.6180339887498947,1.9021130325903064)
(1.6180339887498947,1.9021130325903064)-- (0,3.0776835371752522)
(0,3.0776835371752522)-- (-1.6180339887498945,1.9021130325903073)
(-1.6180339887498945,1.9021130325903073)-- (-1,0)
(-1.8,-1)-- (1.8,-1.02)
(1.8,-1.02)-- (2.9314823100757135,2.3976231187750523)
(2.9314823100757135,2.3976231187750523)-- (0.030776835371753394,4.529830366915455)
(0.030776835371753394,4.529830366915455)-- (-2.893440049423907,2.4299837985500523)(-2.893440049423907,2.4299837985500523)-- (-1.8,-1)
(0,3.0776835371752522)-- (0.030776835371753394,4.529830366915455)
 (1.6180339887498947,1.9021130325903064)-- (2.9314823100757135,2.3976231187750523)
 (1,0)-- (1.8,-1.02)
 (-1,0)-- (-1.8,-1)
(-1.6180339887498945,1.9021130325903073)-- (-2.893440049423907,2.4299837985500523);

\draw[fill=white]
(-1,0) circle (4pt)
(1,0) circle (4pt)
(1.6180339887498947,1.9021130325903064) circle (4pt)
(0,3.0776835371752522) circle (4pt)
(-1.6180339887498945,1.9021130325903073) circle (4pt)
(-1.8,-1) circle (4pt)
(1.8,-1.02) circle (4pt)
(2.9314823100757135,2.3976231187750523) circle (4pt)
(0.030776835371753394,4.529830366915455) circle (4pt)
(-2.893440049423907,2.4299837985500523) circle (4pt);
\end{tikzpicture}
    \caption{5-prism}
    \label{fig:5_prism}
\end{figure}

It is known that for bipartite graphs, the signless Laplacian and the Laplacian spectrum are the same; see, for instance, \cite[Chapter 1]{Brouwer_Haemers_book}. The following was shown by Guo \cite{Guo_2007} (cf.~\cite{Braga_Rodrigues_Trevisan_2013}).

\begin{theorem}[\cite{Guo_2007}, cf.~\cite{Braga_Rodrigues_Trevisan_2013}]\label{thm:tree_Laplacian}
Any tree of order $n$ has at least $\lceil \frac{n}{2}\rceil$ Laplacian eigenvalues in the interval $[0,2)$.
\end{theorem}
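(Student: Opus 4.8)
The plan is to study the matrix $M = \mathcal{L}(T) - 2I$ and to bound its number of negative eigenvalues from below. Since $\mathcal{L}(T)$ is positive semidefinite, a Laplacian eigenvalue lies in $[0,2)$ exactly when it is less than $2$, so the assertion is equivalent to $n^-(M) \ge \lceil n/2 \rceil$; equivalently, at most $\lfloor n/2\rfloor$ Laplacian eigenvalues are $\ge 2$. The tool I would use is that $M$ is a \emph{tree-structured} symmetric matrix: its off-diagonal support is the adjacency of $T$ and its diagonal is $M_{vv}=\deg(v)-2$. For such matrices the inertia can be computed exactly by one bottom-up sweep (symmetric Gaussian elimination along the tree, the diagonalization underlying \cite{Braga_Rodrigues_Trevisan_2013}): root $T$, set $a(v)=\deg(v)-2$, process the vertices from the leaves upward, and eliminate each vertex against its parent. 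By Sylvester's law of inertia the signs of the final values reproduce $\big(n^+(M),n^0(M),n^-(M)\big)$. The fact driving everything is that a leaf starts at $a(v)=-1<0$.

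I would set up an induction on $n$ (phrased for forests, where both sides are additive over components). The clean inductive step takes a leaf $u$ at maximum depth whose neighbour $v$ has degree $2$, so $v$ has only the pendant $u$ and one further neighbour $p$. Here I would pivot $M$ on the $2\times 2$ block indexed by $\{u,v\}$, namely $\left(\begin{smallmatrix} -1 & -1\\ -1 & 0\end{smallmatrix}\right)$; it has negative determinant, hence inertia $(1,0,1)$, and by Haynsworth inertia additivity $n^-(M)=1+n^-(S)$ for the Schur complement $S$ on the remaining vertices. The decisive computation is that, precisely because $\deg(v)=2$, the rank-one Schur correction at $p$ cancels the unit drop in $\deg(p)$ produced by deleting $v$, so that $S$ equals \emph{exactly} $\mathcal{L}(T-\{u,v\})-2I$. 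Induction on the forest $T-\{u,v\}$ of order $n-2$ then gives $n^-(M)\ge 1+\lceil (n-2)/2\rceil=\lceil n/2\rceil$.

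To organize the general case I would aim for the sharp identity $\#\{\lambda_i(\mathcal{L}(T))\ge 2\}=\mu(T)$, the matching number; since $\mu(T)\le\lfloor n/2\rfloor$ this implies the theorem. One inequality is within reach of the method above: the same $\{u,v\}$ pivot (for an arbitrary leaf $u$ and its support $v$) produces a Schur complement of the form $\big(\mathcal{L}(T-\{u,v\})-2I\big)+P$ with $P\succeq 0$, and combined with the matching recursion $\mu(T)=\mu(T-\{u,v\})+1$ this yields $\#\{\lambda_i\ge 2\}\ge\mu(T)$ by monotonicity of eigenvalues under positive-semidefinite perturbation. Multiple pendant leaves at a common support $v$ I would remove first: the differences $e_{\ell_i}-e_{\ell_j}$ are eigenvectors of eigenvalue $1\in[0,2)$, contributing multiplicity $t-1$, and the surviving problem is a tree-structured matrix on $n-t+1$ coordinates whose bottom-up sweep mirrors that of $T$ (the merged pendant drives the pivot at $v$ to the same value), so that $n^-(M)\ge (t-1)+\lceil (n-t+1)/2\rceil\ge\lceil n/2\rceil$.

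The main obstacle is the reverse bound $\#\{\lambda_i\ge 2\}\le\mu(T)$, which is exactly what the theorem needs and is the one the easy perturbation estimates refuse to give. Every monotone reduction points the wrong way: deleting a vertex or an edge from a tree only adds a positive-semidefinite term to $\mathcal{L}-2I$ (while also changing degrees), so Weyl interlacing and single-pivot Schur arguments all bound $n^-(M)$ from \emph{above} rather than below. Obtaining the upper bound on $\#\{\lambda_i\ge 2\}$ therefore cannot rest on such estimates; it requires the \emph{exact} congruence bookkeeping of the bottom-up sweep, charging each nonnegative output value injectively to a distinct negative one---equivalently, matching every such vertex to a neighbour so that the nonnegative values are counted by a matching of $T$. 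Carrying out this charging uniformly over all support-vertex configurations is the crux of the argument, and is the content of \cite{Guo_2007, Braga_Rodrigues_Trevisan_2013}.
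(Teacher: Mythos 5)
First, a point of reference: the paper does not prove this statement --- it is quoted verbatim from \cite{Guo_2007} (cf.~\cite{Braga_Rodrigues_Trevisan_2013}) and used as a black box, so there is no in-paper proof to compare against. Your proposal must therefore stand on its own, and as written it does not close: it is an outline of the known congruence/diagonalization approach with the decisive case left open. Your clean inductive step (pivot $M=\mathcal{L}(T)-2I$ on a deepest leaf $u$ and its support $v$) is correct \emph{only} when $\deg(v)=2$, where the Schur complement is exactly $\mathcal{L}(T-\{u,v\})-2I$; I verified that computation and it is fine. The gap is the complementary case, $t\ge 2$ pendant leaves at a common support $v$. There, after extracting the $t-1$ eigenvalue-$1$ eigenvectors, the compressed matrix on $n-t+1$ coordinates has off-diagonal entry $-\sqrt{t}$ between the merged pendant and $v$ and diagonal $\deg(v)-2$ at $v$; it is tree-structured but is \emph{not} $\mathcal{L}(T')-2I$ for any tree $T'$, so the induction hypothesis as you have stated it simply does not apply to it. If you instead pivot out $\{\hat u,v\}$ from the compressed matrix, the Schur complement at the grandparent is $\mathcal{L}(T-N[v])-2I$ plus a positive semidefinite rank-one term, and --- exactly as you observe in your last paragraph --- such perturbations bound $n^-$ from \emph{above}, not below; the crude correction $n^-(A+P)\ge n^-(A)-\rank(P)$ loses one unit and already fails to reach $\lceil n/2\rceil$ for $t=2$ and $n$ odd.

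So the tension in your write-up is real: paragraphs two and three purport to give a self-contained induction, but paragraph four correctly identifies that the bound the theorem needs (at most $\lfloor n/2\rfloor$ Laplacian eigenvalues $\ge 2$, equivalently your upper bound $\#\{\lambda_i\ge 2\}\le\mu(T)$) is precisely the one that monotone reductions cannot deliver, and you then defer it to \cite{Guo_2007, Braga_Rodrigues_Trevisan_2013}. That deferral is the missing content. To repair the argument you would need to strengthen the induction hypothesis to a class of tree-structured symmetric matrices closed under your leaf-merging and two-vertex-pivot reductions (tracking exactly which diagonal values can arise at a support vertex), and prove the $\lceil\cdot/2\rceil$ count for that wider class; this bookkeeping over all support-vertex configurations is the substance of the cited proofs and is absent here.
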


Using the above facts, it is easy to see that for any tree $T$, we have $s(L(T))\le -1$. It follows that Conjecture \ref{conj:line_graph} holds for line graphs of trees. 

Furthermore, we note that Conjecture \ref{conj:line_graph} is true for dense line graphs. In other words, if $G$ has size $m$ and order $n$, with $m\ge 2n-1$, then 
    \[n^+(L(G))\le n \le m-n+1 \le n^-(L(G))+1.\]
So Conjecture \ref{conj:line_graph} is open only for the line graph of $G$ when $n\le m\le 2n-2$.

\subsection{Self-complementary graphs}

A graph \( G \) is called \emph{self-complementary} if it is isomorphic to its complement $\overline{G}$.

\begin{theorem}\label{thm:self-complementary-graphs1}
Let \( G \) be a self-complementary graph. Then 
\[
n^+(G) \le n^-(G) + 1.
\]
\end{theorem}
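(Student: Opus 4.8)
The plan is to exploit the defining relation of the complement together with the inertia subadditivity recorded in Lemma~\ref{lemma:inertia-interlacing}. The starting point is the matrix identity
\[
A(G) + A(\overline{G}) = J - I,
\]
where $J$ is the all-ones matrix of order $n$. Since the eigenvalues of $J$ are $n$ (once) and $0$ (with multiplicity $n-1$), the matrix $J - I$ has eigenvalues $n-1$ (once) and $-1$ (with multiplicity $n-1$). In particular $n^+(J-I) = 1$. This is the only eigenvalue computation needed, and it is immediate.

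Next I would apply part $(i)$ of Lemma~\ref{lemma:inertia-interlacing} with $A = A(G)$ and $B = A(\overline{G})$, so that $A + B = J - I$. The lower bound in that lemma gives
\[
n^+(A(G)) - n^-(A(\overline{G})) \le n^+\bigl(A(G) + A(\overline{G})\bigr) = n^+(J - I) = 1,
\]
that is, $n^+(G) \le n^-(\overline{G}) + 1$. Finally, because $G$ is self-complementary, $G \cong \overline{G}$, and so the two graphs share the same spectrum; in particular $n^-(\overline{G}) = n^-(G)$. Substituting yields $n^+(G) \le n^-(G) + 1$, as desired.

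I expect there to be essentially no hard obstacle here: the entire argument reduces to recognizing that the self-complementary hypothesis lets one replace $n^-(\overline{G})$ by $n^-(G)$ and that the complement identity feeds directly into the Hermitian inertia inequality. The only point requiring a moment of care is orienting the roles of $A$ and $B$ correctly in Lemma~\ref{lemma:inertia-interlacing}$(i)$ so that the positive inertia of $G$ appears on the left and the negative inertia of $\overline{G}$ is subtracted. One might also note in passing that self-complementary graphs satisfy $n \equiv 0,1 \pmod 4$, but this divisibility fact is not needed for the bound itself.
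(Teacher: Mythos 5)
Your proof is correct. Where the paper invokes the Nordhaus--Gaddum-type inertia bounds of Elphick and Wocjan, namely $n^+(G)+n^+(\overline{G})\le n+1$ and $n^-(G)+n^-(\overline{G})\ge n-1$, and then specializes to the self-complementary case to get $n^+(G)\le \tfrac{n+1}{2}\le n^-(G)+1$, you instead apply Lemma~\ref{lemma:inertia-interlacing}$(i)$ directly to the decomposition $A(G)+A(\overline{G})=J-I$, obtaining $n^+(G)-n^-(\overline{G})\le n^+(J-I)=1$ in a single step and then using $n^-(\overline{G})=n^-(G)$. The two arguments have the same engine (the cited Elphick--Wocjan inequalities are themselves consequences of inertia subadditivity applied to $J-I$), but your route is more direct: it bypasses the intermediate bounds in terms of $n$ and uses only a lemma already stated in the paper, so it is self-contained. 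The paper's route yields the extra quantitative information $n^+(G)\le\frac{n+1}{2}$ and $n^-(G)\ge\frac{n-1}{2}$, which can be useful elsewhere but is not needed for this statement. Your orientation of the lemma is right, and the only degenerate case ($n=1$, where $n^+(J-I)=0$) is harmless since the claimed inequality is trivial there.
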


\begin{proof}
It is known that for any graph \( G \) of order \( n \), the following hold (see \cite[Theorem 7]{Elphick_Wocjan_2017}):
\[
n^+(G) + n^+(\overline{G}) \le n + 1 \quad \text{and} \quad n - 1 \le n^-(G) + n^-(\overline{G}).
\]
For a self-complementary graph $G$, these inequalities simplify to 
\[
2n^+(G) \le n + 1 \quad \text{and} \quad 2n^-(G) \ge n - 1.
\]
Thus,
\[
n^+(G) \le \frac{n + 1}{2} \le n^-(G) + 1.
\qedhere\] \end{proof}

Any self-complementary graph of order $n\ge 6$ contains a triangle since the Ramsey number $R(3,3)=6$. Hence by the Interlacing Theorem, $n^-(G)\ge n^-(K_3)=2$. Moreover, there is no self-complementary graph of order at most 5 with $n^-=1$. Thus, by Theorem~\ref{thm:self-complementary-graphs1}, we have 
\[
2n^+(G) \le n^-(G)(n^-(G) + 1),
\]
i.e., any self-complementary graph \( G \) satisfies Conjecture~\ref{conj:inertia_main}.

\begin{remark}
The inequality in Theorem \ref{thm:self-complementary-graphs1} is tight for Paley graphs\footnote{See \href{https://mathworld.wolfram.com/PaleyGraph.html}{Paley graphs} at Wolfram MathWorld.}.
\end{remark}

\subsection{Graphs with few odd cycles}

In this subsection, we verify Conjecture \ref{conj:inertia_main} for graphs with at most 6 odd cycles. In \cite[Theorem 5.2]{Ma_Yang_Li_2013}, the following inequality was established for a general graph \( G \).

\begin{theorem}[{\cite{Ma_Yang_Li_2013}}]\label{thm:signature-and-odd-cycle}
Let \( G \) be a graph. Then 
\[
|n^+(G) - n^-(G)| \le c_1(G),
\]
where \( c_1(G) \) denotes the number of odd cycles in \( G \).
\end{theorem}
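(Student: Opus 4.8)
The plan is to reduce to the bipartite case by deleting a small set of vertices, one at a time, and to control the signature using the single-vertex bound already recorded in Lemma~\ref{lemma:rank_vertex_deleted_subgraph}. Two facts make this work: (i) a graph is bipartite if and only if it contains no odd cycle, and every bipartite graph has spectrum symmetric about $0$ (conjugate $A(G)$ by the $\pm 1$ diagonal signing of the two colour classes), so $s(G)=0$; and (ii) deleting a single vertex changes the signature by at most $1$. First I would enumerate the $c_1(G)$ odd cycles of $G$ and choose one vertex from each, forming a set $X$ with $|X|\le c_1(G)$ such that $G-X$ is bipartite: every odd cycle of $G$ contains a chosen vertex and is therefore destroyed, and since $G-X$ is a subgraph of $G$ no new odd cycle can appear.

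Next I would delete the vertices of $X=\{x_1,\dots,x_k\}$ one at a time, obtaining the chain $G=G_0\supset G_1\supset\cdots\supset G_k=G-X$, where $G_i=G_{i-1}-x_i$. Applying Lemma~\ref{lemma:rank_vertex_deleted_subgraph} to each step gives $|s(G_{i-1})-s(G_i)|\le 1$, and since $G_k=G-X$ is bipartite we have $s(G_k)=0$. A telescoping estimate then yields
\[
|s(G)| = |s(G_0)-s(G_k)| \le \sum_{i=1}^{k}|s(G_{i-1})-s(G_i)| \le k \le c_1(G),
\]
which is exactly the desired inequality $|n^+(G)-n^-(G)|\le c_1(G)$.

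The main subtlety, and the step I expect to require the most care, is the choice between deleting vertices and deleting edges, since only the vertex route yields the sharp constant. If one instead removed an edge from each odd cycle and applied Lemma~\ref{lemma:inertia-interlacing} to the rank-two edge perturbation (which has inertia $(1,\,\cdot\,,1)$), one would only obtain $|n^+(G)-n^+(G-e)|\le 1$ and $|n^-(G)-n^-(G-e)|\le 1$, hence the weaker bound $|s(G)-s(G-e)|\le 2$ and ultimately $|s(G)|\le 2\,c_1(G)$. It is the sharp single-vertex estimate of Lemma~\ref{lemma:rank_vertex_deleted_subgraph} that delivers the correct constant, so the vertex-transversal formulation is essential. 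The remaining care is merely to note that Lemma~\ref{lemma:rank_vertex_deleted_subgraph} applies to an arbitrary vertex of an arbitrary (not necessarily connected) graph, so it may legitimately be invoked at every step of the chain, independently of how the components of the intermediate graphs $G_i$ evolve.
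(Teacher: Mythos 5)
Your argument is correct, but note that the paper does not actually prove this statement: it is imported verbatim from \cite{Ma_Yang_Li_2013} as an external citation, so there is no in-paper proof to compare against. What you have written is a valid self-contained derivation assembled from ingredients already present in the paper. The three pillars all check out: a set $X$ obtained by choosing one vertex from each odd cycle satisfies $|X|\le c_1(G)$ and meets every odd cycle, so $G-X$ is bipartite; bipartite graphs have spectrum symmetric about $0$ (via $DAD=-A$ for the $\pm1$ signing $D$ of the colour classes), hence $s(G-X)=0$; and Lemma~\ref{lemma:rank_vertex_deleted_subgraph} (a consequence of Cauchy interlacing, with no connectivity hypothesis) gives $|s(G_{i-1})-s(G_i)|\le 1$ at each deletion, so the telescoping bound $|s(G)|\le |X|\le c_1(G)$ follows. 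The degenerate case $c_1(G)=0$ is handled automatically since $G$ is then already bipartite. Your side remark about why an edge-deletion version only yields the constant $2$ is also accurate and correctly identifies why the vertex-transversal formulation is the right one. In short: a correct proof of a result the paper only cites, and one that fits naturally into the paper's own toolkit.
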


Therefore, if \(c_1(G) \leq \binom{n^-(G)}{2}\), then Conjecture~\ref{conj:inertia_main} holds. By Table \ref{table:Torgasev}, it suffices to consider the case \( n^-(G) \geq 4 \). So, if \( c_1(G) \leq \binom{4}{2} = 6 \), Conjecture~\ref{conj:inertia_main} holds. In particular, the conjecture holds for unicyclic, bicyclic, and tricyclic graphs.

\subsection{Graph products}

We verify Conjecture \ref{conj:inertia_main} for some graph products, namely the tensor product and the join of two graphs. For the definitions of these graph products, refer \cite{Barik_op_spec_2018}.

\begin{proposition}
Let $G$ and $H$ be two graphs. Then 
\[ 2n^+(G\otimes H)\le n^-(G\otimes H)(n^-(G\otimes H)+1),\]
where $G\otimes H$ denotes the direct (tensor) product of $G$ and $H$.
\end{proposition}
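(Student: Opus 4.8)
The plan is to combine the standard description of the spectrum of a direct product with the signature reformulation of the conjecture (Conjecture~\ref{conj:inertia_main_signature_version}). Since $A(G \otimes H) = A(G) \otimes A(H)$ is the Kronecker product of the adjacency matrices, the eigenvalues of $G \otimes H$ are exactly the products $\lambda_i(G)\,\lambda_j(H)$ over all pairs $(i,j)$; see \cite{Barik_op_spec_2018}. A product of two nonzero eigenvalues is positive precisely when the factors share a sign and negative precisely when they have opposite signs. Writing $p = n^+(G)$, $q = n^-(G)$, $r = n^+(H)$, $s = n^-(H)$, I would first record
\[ n^+(G \otimes H) = pr + qs, \qquad n^-(G \otimes H) = ps + qr, \]
so that
\[ s(G \otimes H) = (p-q)(r-s) = s(G)\,s(H), \qquad M := n^-(G \otimes H) = ps + qr. \]
By the equivalence of Conjectures~\ref{conj:inertia_main} and~\ref{conj:inertia_main_signature_version}, it then suffices to prove $s(G)\,s(H) \le \binom{M}{2}$.

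Since $M \ge 0$ we have $\binom{M}{2} \ge 0$, so the inequality is immediate whenever $s(G)\,s(H) \le 0$; two cases remain, according to the common sign of the two signatures. If $s(G) > 0$ and $s(H) > 0$, then the hypothesis, in the equivalent form $s(G) \le \binom{q}{2}$ and $s(H) \le \binom{s}{2}$, forces $q, s \ge 2$ and gives $s(G)\,s(H) \le \binom{q}{2}\binom{s}{2}$. Meanwhile $p > q$ and $r > s$ yield $M = ps + qr \ge 2qs$. I would finish this case by the elementary inequality $\binom{q}{2}\binom{s}{2} \le \binom{2qs}{2}$ (valid for $q,s \ge 1$, reducing to $7qs + q + s \ge 5$) together with the monotonicity of $\binom{\cdot}{2}$.

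The remaining case, $s(G) < 0$ and $s(H) < 0$, is the step I expect to require the most care, since here the hypothesis offers no direct leverage: Conjecture~\ref{conj:inertia_main} bounds signatures only from above, while in this case the product $s(G)\,s(H) = (q-p)(s-r)$ is a positive quantity built from the unbounded \emph{lower} tails $q-p$ and $s-r$. The idea is that control now comes instead from the automatic largeness of $M$. Concretely, a nonempty graph has at least one positive eigenvalue, so $p, r \ge 1$, whence $q > p \ge 1$ and $s > r \ge 1$ give $q, s \ge 2$ and
\[ s(G)\,s(H) = (q-p)(s-r) \le (q-1)(s-1), \]
while $p, r \ge 1$ force $M = ps + qr \ge q + s$. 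One then checks the elementary inequality $(q-1)(s-1) \le \binom{q+s}{2}$ for $q,s \ge 1$ (equivalently $q^2 + s^2 + q + s \ge 2$), and combining with monotonicity of $\binom{\cdot}{2}$ completes the case and the proof. The only genuine subtlety is thus recognizing that in the doubly-negative-signature regime the bound on $n^+(G\otimes H)$ is supplied not by the inductive hypothesis but by the size of $n^-(G\otimes H)$.
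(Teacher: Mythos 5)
Your proof is correct, and it starts from the same place as the paper's --- the Kronecker-product spectrum, giving $n^+(G\otimes H)=pr+qs$ and $n^-(G\otimes H)=ps+qr$ with $p=n^+(G)$, $q=n^-(G)$, $r=n^+(H)$, $s=n^-(H)$ --- but it finishes by a genuinely different route. The paper skips the signature reformulation and the case analysis entirely: setting $M=ps+qr$, it applies AM--GM to get $M^2\ge 4(pr)(qs)\ge 2(pr+qs)=2n^+(G\otimes H)$, where the last step uses only $pr\ge 1$ and $qs\ge 1$, and then $M(M+1)=M^2+M>2n^+(G\otimes H)$. Notably, the paper's computation never invokes the hypothesis that $G$ and $H$ satisfy Conjecture~\ref{conj:inertia_main}; it needs only that each factor has an edge (the edgeless case being trivial), so the proposition actually holds unconditionally in that sense. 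Your argument, by contrast, passes to Conjecture~\ref{conj:inertia_main_signature_version}, genuinely uses the hypothesis in the doubly-positive-signature case, and replaces the one-line AM--GM estimate with three cases and elementary binomial inequalities such as $\binom{q}{2}\binom{s}{2}\le\binom{2qs}{2}$ and $(q-1)(s-1)\le\binom{q+s}{2}$; all of these check out. The paper's version is shorter and exposes that the inductive hypothesis is superfluous here, while yours correctly isolates where the hypothesis could matter and handles the ``doubly negative'' regime by exploiting the automatic largeness of $n^-(G\otimes H)$. One small point to tighten: in that last case you should justify nonemptiness before asserting $p,r\ge 1$ --- namely, $s(G)<0$ gives $q>p\ge 0$, so $q\ge 1$, hence $G$ has an edge and therefore a positive eigenvalue.
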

  
\begin{proof}
It is known that if $\lambda_i$ ($1\le i\le |V(G)|$) and $\mu_j$ ($1\le j\le |V(H)|$) are the eigenvalues of $G$ and $H$ respectively, then the eigenvalues of $G\otimes H$ are given by $\lambda_i\mu_j$ (see \cite{Barik_op_spec_2018}). It follows that   
\[ n^+(G\otimes H) = n^+(G)n^+(H) + n^-(G)n^-(H); \quad n^-(G\otimes H) = n^+(G)n^-(H) + n^-(G)n^+(H).\]
We have 
\begin{align*}
     n^-(G\otimes H)(n^-(G\otimes H)+1) & = (n^+(G)n^-(H) + n^-(G)n^+(H))^2 + n^-(G\otimes H)\\
     & \ge 4n^+(G)n^-(G)n^+(H)n^-(H) +  n^-(G\otimes H)\\
     & \ge 2n^+(G)n^+(H) + 2n^-(G)n^-(H) + n^-(G\otimes H)\\
     & > 2n^+(G\otimes H).\qedhere
\end{align*}
\end{proof}

Next, we consider the join of graphs. We require the following lemma.

\begin{lemma}\label{lemma:twins_negative_inertia}
    Let $G$ be a graph and $u,v\in V(G)$ be such that $N[u]=N[v]$. Then
    \[ n^-(G )\ge n^-(G -u-v) +1.\]
\end{lemma}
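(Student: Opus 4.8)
The plan is to exploit the precise structure forced by the hypothesis $N[u]=N[v]$. Since $v\in N[v]=N[u]$, the vertices $u$ and $v$ are \emph{adjacent}, and for every other vertex $w$ we have $w\sim u \iff w\sim v$. Consequently the rows of $A(G)$ indexed by $u$ and $v$ agree on every coordinate outside $\{u,v\}$, while on $\{u,v\}$ the diagonal entries are $0$ and the $(u,v)$-entry is $1$. The key observation is then a direct computation: for $z=e_u-e_v$, the $w$-th entry of $A(G)z$ equals $A_{wu}-A_{wv}$, which is $0$ for $w\notin\{u,v\}$, equals $-1$ at $w=u$, and equals $+1$ at $w=v$. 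Hence $A(G)z=-z$, so $z$ is an eigenvector of $A(G)$ with eigenvalue $-1$. This single fact is the engine of the whole argument.

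First I would recall (via Sylvester's law of inertia) that $n^-(M)$ equals the maximum dimension of a subspace on which the quadratic form $x\mapsto x^\top M x$ is negative definite. Write $H=G-\{u,v\}$ and pick a subspace $W\subseteq\mathbb{R}^{V(H)}$ of dimension $n^-(H)$ on which $x^\top A(H)x<0$ for all nonzero $x\in W$. I would lift each $x\in W$ to $\widehat{x}\in\mathbb{R}^{V(G)}$ by setting the $u$- and $v$-coordinates to $0$, obtaining a subspace $\widehat{W}$, and then adjoin $z=e_u-e_v$. Because $\widehat{x}$ vanishes on $\{u,v\}$ while $z$ is supported there, $z\notin\widehat{W}$, so $W'=\widehat{W}\oplus\operatorname{span}(z)$ has dimension $n^-(H)+1$.

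The next step is to verify that $W'$ is negative definite for $A(G)$, and here the $-1$ eigenvector does the essential work. For $\widehat{x}\in\widehat{W}$ and $t\in\mathbb{R}$,
\[
(\widehat{x}+tz)^\top A(G)(\widehat{x}+tz)
= \widehat{x}^\top A(G)\widehat{x} + 2t\,\widehat{x}^\top A(G)z + t^2 z^\top A(G)z .
\]
The first term equals $x^\top A(H)x$ since the $u,v$ coordinates of $\widehat{x}$ are zero; the last term equals $-z^\top z=-2$; and the cross term vanishes because $A(G)z=-z$ gives $\widehat{x}^\top A(G)z=-\widehat{x}^\top z=0$, as $\widehat{x}$ and $z$ have disjoint support. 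Thus the quadratic form splits as
\[
(\widehat{x}+tz)^\top A(G)(\widehat{x}+tz) = x^\top A(H)x - 2t^2 < 0
\]
for every nonzero $(\widehat{x},tz)$, whence $n^-(G)\ge \dim W' = n^-(H)+1$, which is the claim.

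I do not expect a genuine obstacle: the only point requiring care is the orthogonality of the cross term, and it follows automatically from the eigenvector identity $A(G)(e_u-e_v)=-(e_u-e_v)$ together with the disjoint supports of the lifted vectors and $z$. The main conceptual step is simply recognizing that closed twins manufacture a guaranteed $-1$ eigenvector that is $A(G)$-orthogonal to the lifted negative cone of $H$, so the negative inertia strictly increases. (One could instead phrase the conclusion through eigenvalue interlacing, but the explicit negative-definite subspace is cleaner and yields the inequality immediately.)
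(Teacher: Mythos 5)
Your proof is correct and follows essentially the same route as the paper's: both lift a maximal negative subspace of $A(G-\{u,v\})$ by zero-padding and adjoin the vector $e_u-e_v$ supported on $\{u,v\}$, then verify that the quadratic form is negative on the resulting $(n^-(G-\{u,v\})+1)$-dimensional subspace. Your explicit observation that $A(G)(e_u-e_v)=-(e_u-e_v)$ is a nice touch that makes the vanishing of the cross terms transparent, a point the paper's computation leaves implicit.
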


\begin{proof} Let $n=|V(G)|$ and $H=G-u-v$. Let $k=n^-(H)$ and $x_1, \ldots, x_k$ denote the orthogonal unit eigenvectors corresponding to the negative eigenvalues of $A(H)$. Let $x_{k+1} \in \mathbb{R}^2$ be the unit eigenvector of the adjacency matrix of $K_2\cong G[\{u,v\}]$ corresponding to the eigenvalue $-1$, given by
\[
x_{k+1} = \frac{1}{\sqrt{2}} \begin{bmatrix} 1 \\ -1 \end{bmatrix}.
\]
Extend $x_i$ to $\tilde{x}_i \in \mathbb{R}^n$ by padding with zeroes for $1 \le i \le k+1$. Consider the $(k+1)$-dimensional subspace $W$  of $\mathbb{R}^n$ spanned by the orthogonal unit vectors $\tilde{x}_1, \cdots, \tilde{x}_{k+1}$ and $z=\alpha_1\tilde{x}_1 + \cdots + \alpha_{k+1}\tilde{x}_{k+1}\in W$ with $\sum_{i=1}^{k+1}\alpha_i^2 = 1$. Using the Courant-Fischer-Weyl Min-Max Theorem (see \cite{Horn_Johnson_2013}), we have
\begin{align*}
   \lambda_{n-k-1}(G) & \le \max_{z} z^T A(G)z \\
   & = (\alpha_1\tilde{x}_1 + \cdots + \alpha_{k+1}\tilde{x}_{k+1})^T A(G) (\alpha_1\tilde{x}_1 + \cdots + \alpha_{k+1}\tilde{x}_{k+1})\\
   & = \alpha_{k+1}^2x_{k+1}^TA(K_2)x_{k+1} + \sum_{i=1}^{k} \alpha_i^2 x_i^TA(H)x_i\\
   & < 0.
\end{align*}
We conclude that $n^-(G)\ge n^-(H)+1$. 
\end{proof}

\begin{theorem}
Let $G$ and $H$ be two connected graphs, each of order at least $2$, for which Conjecture~\ref{conj:inertia_main} holds. Then 
\[ 2n^+(G\vee H)\le n^-(G\vee H)(n^-(G\vee H)+1),\]
where $G\vee H$ denotes the join of $G$ and $H$.
\end{theorem}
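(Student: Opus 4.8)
The plan is to control the inertia of the join through the rank-two perturbation that distinguishes it from the disjoint union, and then reduce everything to a clean arithmetic inequality. Write $A(G\vee H)=\begin{bmatrix}A(G)&J\\ J^{T}&A(H)\end{bmatrix}$, where $J$ is the all-ones block, and decompose $A(G\vee H)=\bigl(A(G)\oplus A(H)\bigr)+M$ with $M=\begin{bmatrix}0&J\\ J^{T}&0\end{bmatrix}$. Since $J$ has rank one, $M$ has rank two with a single positive and a single negative eigenvalue, so $n^{+}(M)=n^{-}(M)=1$. Applying Lemma~\ref{lemma:inertia-interlacing} to this decomposition (and using that inertia is additive over the disjoint union $A(G)\oplus A(H)$) immediately gives the bound on the positive side,
\[
n^{+}(G\vee H)\le n^{+}(G)+n^{+}(H)+1,
\]
together with the weak lower bound $n^{-}(G\vee H)\ge n^{-}(G)+n^{-}(H)-1$.

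The key step is to upgrade the weak bound to
\[
n^{-}(G\vee H)\ \ge\ n^{-}(G)+n^{-}(H).\qquad(\star)
\]
Granting $(\star)$, the theorem follows by a short computation. Write $a=n^{+}(G)$, $b=n^{-}(G)$, $c=n^{+}(H)$, $d=n^{-}(H)$. Since $t\mapsto t(t+1)$ is increasing and $n^{-}(G\vee H)\ge b+d$, it suffices to verify $2(a+c+1)\le(b+d)(b+d+1)$. Using the hypotheses $2a\le b(b+1)$ and $2c\le d(d+1)$, the left side is at most $b^{2}+b+d^{2}+d+2$, while the right side equals $b^{2}+2bd+d^{2}+b+d$; the inequality thus reduces to $bd\ge1$. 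As $G$ and $H$ are connected of order at least two, each contains $K_{2}$ as an induced subgraph, so by the Interlacing Theorem $b,d\ge n^{-}(K_{2})=1$, whence $bd\ge1$ and the theorem is proved.

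It remains to establish $(\star)$, which I expect to be the main obstacle. The base case $H=K_{2}$ is clean and motivates the whole approach: if $p,q$ are the two vertices of $K_{2}$, then in $G\vee K_{2}$ both are adjacent to each other and to every vertex of $G$, so $N[p]=N[q]=V(G\vee K_{2})$. Thus $p,q$ are closed twins, and Lemma~\ref{lemma:twins_negative_inertia} yields $n^{-}(G\vee K_{2})\ge n^{-}\bigl((G\vee K_{2})-\{p,q\}\bigr)+1=n^{-}(G)+1=n^{-}(G)+n^{-}(K_{2})$. For general $H$ I would argue by induction on $|V(H)|$: choose a non-cut vertex $w$ of $H$, so that $H-w$ is connected of order at least two, and set $F=G\vee(H-w)$, noting $(G\vee H)-w=F$. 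By the induction hypothesis $n^{-}(F)\ge n^{-}(G)+n^{-}(H-w)$, and by interlacing $n^{-}(G\vee H)\ge n^{-}(F)$. If $n^{-}(H)=n^{-}(H-w)$, these two inequalities already give $(\star)$.

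The genuinely hard subcase is $n^{-}(H)=n^{-}(H-w)+1$, where one must show that reattaching $w$ also raises the negative index of the join, i.e. $n^{-}(G\vee H)\ge n^{-}(F)+1$. Morally this should hold because in $G\vee H$ the vertex $w$ has strictly more neighbours than in $H$, namely all of $V(G)$ in addition to $N_{H}(w)$, so a perturbation that forces a new negative eigenvalue for $H$ ought to do so a fortiori for the join. Making this precise amounts to comparing the Haynsworth/Schur-complement conditions that govern whether appending a vertex with neighbourhood $S$ increases $n^{-}$, namely the sign of $\chi_{S}^{T}A(\cdot)^{-1}\chi_{S}$, for $S=N_{H}(w)$ against $F$ versus $S=N_{H}(w)\cup V(G)$ against $A(F)$; establishing the required monotonicity here is the technical heart of the proof. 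As additional leverage for the remaining low-inertia configurations, note that the arithmetic above already settles every case with $(b-1)(d-1)\ge 2$ using only the weak bound $n^{-}(G\vee H)\ge b+d-1$, so one is reduced to $\min(b,d)=1$ and $b=d=2$; when $b=1$ the graph $G$ is a complete bipartite graph (the connected graphs with exactly one negative eigenvalue), and then $G\vee H$ already contains an induced $K_{4}$, while the cases $\min(b,d)\le 3$ are further constrained by Table~\ref{table:Torgasev}.
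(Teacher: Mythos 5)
Your overall architecture matches the paper's: the same two bounds from Lemma~\ref{lemma:inertia-interlacing} ($n^{+}(G\vee H)\le n^{+}(G)+n^{+}(H)+1$ and $n^{-}(G\vee H)\ge n^{-}(G)+n^{-}(H)-1$), the same final arithmetic, and even the same use of Lemma~\ref{lemma:twins_negative_inertia} for the join with $K_2$. Your observation that the weak lower bound already suffices whenever $(b-1)(d-1)\ge 2$ is correct and is essentially what the paper exploits. However, the proof as written has genuine gaps. The inequality $(\star)$, which you identify as the crux, is not established: your induction on $|V(H)|$ breaks exactly in the subcase $n^{-}(H)=n^{-}(H-w)+1$, which you explicitly leave as ``the technical heart of the proof.'' The Schur-complement monotonicity you gesture at is not a routine verification (the relevant quadratic form is evaluated against the inverse of a different, larger matrix after the join, so there is no a fortiori argument), and the paper does not prove $(\star)$ in this generality at all --- it routes around it.

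Concretely, the two residual cases your fallback leaves open are closed in the paper as follows. When $\min(b,d)=1$, say $n^{-}(H)=1$, so $H=K_{p,q}$: the paper does not use the generic bound $n^{+}(G\vee H)\le n^{+}(G)+n^{+}(H)+1$ but instead decomposes $A(G\vee K_{p,q})$ as $A(G)\oplus 0$ plus the adjacency matrix of the complete tripartite graph $K_{p,q,|G|}$, which has exactly one positive eigenvalue; Lemma~\ref{lemma:inertia-interlacing} then gives the sharper bound $n^{+}(G\vee H)\le n^{+}(G)+1$, and this is paired with $n^{-}(G\vee H)\ge n^{-}(G\vee K_2)\ge n^{-}(G)+1$ (closed twins plus interlacing, exactly your base case). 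Your remark that $G\vee H$ contains an induced $K_4$ only yields $n^{-}(G\vee H)\ge 3$ and does not finish this case. When $b=d=2$, Table~\ref{table:Torgasev} gives $n^{+}(G),n^{+}(H)\le 3$, hence $n^{+}(G\vee H)\le 7$ and $n^{-}(G\vee H)\ge 3$; if $n^{-}(G\vee H)=3$ one is done by the table again, and if $n^{-}(G\vee H)\ge 4$ then $14<20$. You would need to supply both of these arguments (or a proof of $(\star)$) for your proposal to be complete.
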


\begin{proof}
Without loss of generality, we can assume that $n^-(G)\ge n^-(H)\ge 1$. By Lemma \ref{lemma:inertia-interlacing}, we have 
\[n^+(G\vee H)\le n^+(G)+n^+(H)+n^+(K_{|G|,|H|})= n^+(G)+n^+(H)+1,\]
and 
\[ n^-(G\vee H)\ge n^-(G)+n^-(H)-n^-(K_{|G|, |H|})= n^-(G)+n^-(H) -1.\]
We consider the following cases:

\textbf{Case 1:} $n^-(H)\ge 2$. ~ If $n^-(G)\ge 3$, then
\begin{align*}
   2n^+(G\vee H) & \le 2n^+(G)+2n^+(H)+2\\
   & \le n^-(G)\left(n^-(G)+1\right) + n^-(H)\left(n^-(H)+1\right) + 2\\
   & \le \left(n^-(G)+n^-(H)-1\right) \left(n^-(G)+n^-(H)\right)\\
   & \le n^-(G\vee H)\left( n^-(G\vee H)  +1 \right).   
\end{align*}

And if $n^-(G)=2$, then $n^-(H)=2$. By Table \ref{table:Torgasev}, we have $\max\{n^+(G), n^+(H)\}\le 3$. It follows that 
\[ n^+(G\vee H)\le 3 + 3 + 1 =7 \ \text{ and }\ n^-(G\vee H)\ge 2 +2 - 1 = 3.\]
If $n^-(G\vee H)=3$, then we are done by Table \ref{table:Torgasev}. So assume $n^-(G\vee H)\ge 4$, which implies 
\[ 2n^+(G\vee H)\le 14 < n^-(G\vee H)\left(n^-(G\vee H) + 1\right).\]

\textbf{Case 2:} $n^-(H)=1$. ~ Then $H$ is a complete bipartite graph, say $K_{p,q}$ with $p+q\ge 2$. 

Again, by Lemma \ref{lemma:inertia-interlacing}, we have 
\begin{align*}
   2n^+(G\vee H)& \le 2n^+(G) + 2n^+(K_{p,q,|G|})\\
   & =2n^+(G) +2\\
   & \le n^-(G)\left(n^-(G)+1\right)  + 2\\
   & \le \left(n^-(G)+1\right)\left(n^-(G)+2\right)\\
   & \le n^-(G\vee K_2)\left(n^-(G\vee K_2)  +1 \right)\\
   & \le n^-(G\vee H)\left(n^-(G\vee H)  +1 \right).
\end{align*}
The second last inequality holds by Lemma \ref{lemma:twins_negative_inertia} and the last inequality holds by the Interlacing Theorem since $G\vee K_2$ is an induced subgraph of $G\vee H$. This completes the proof.
\end{proof}

\subsection{Cographs}

A graph is called a \emph{cograph} if it does not contain the 4-vertex path $P_4$ as an induced subgraph. Another characterization is that every nontrivial induced subgraph of a cograph has a pair of vertices with the same open or closed neighbourhoods. We verify Conjecture \ref{conj:inertia_main} for cographs. We first recall some results.

\begin{theorem}[\cite{Mohammadian_Trevisan_2016, Ghorbani_2019}]\label{thm:cograph_eigenvalues}
    Let $G$ be a cograph, $u,v\in V(G)$ be distinct vertices, and $H=G-u$. Let $\mult(G,\lambda)$ denote the eigenvalue multiplicity of $\lambda$ in the spectrum of $G$. Then the following statements are true.
    \begin{enumerate}[$(i)$]
        \item $G$ has no eigenvalues in the interval $(-1,0)$.
        \item If $N(u)=N(v)$, then $\mult(G, 0)=\mult(H,0)+1$.
        \item If $N[u]=N[v]$, then $\mult(G,-1) = \mult(H, -1)+1$.
    \end{enumerate}
\end{theorem}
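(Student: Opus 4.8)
The three parts are of quite different character: (ii) and (iii) are local statements about deleting one of a pair of twins, and should both follow from a single linear-algebra observation, whereas (i) is a global spectral statement that I would prove by induction along the cotree of the cograph. The plan is to dispatch (ii) and (iii) first and then spend the real effort on (i).

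For (ii) and (iii) the key observation is elementary: if a real symmetric matrix $M$ has two equal rows (hence, by symmetry, two equal columns), then deleting the corresponding row and column leaves the rank unchanged, so the nullity drops by exactly one. (The equal rows can be discarded without changing the row space, and then the now-equal columns without changing the column space.) For (ii) I would apply this with $M = A(G)$: when $N(u)=N(v)$ and $u\not\sim v$, rows $u$ and $v$ of $A(G)$ coincide and deleting $u$ yields $A(H)$, so $\mult(G,0)=\dim\ker A(G)=\dim\ker A(H)+1=\mult(H,0)+1$ (this is also immediate from Lemma~\ref{lemma:twin_inertia}). For (iii) I would apply the same observation with $M=A(G)+I$: when $N[u]=N[v]$ and $u\sim v$, rows $u$ and $v$ of $A(G)+I$ coincide, and the principal submatrix obtained by deleting $u$ is exactly $A(H)+I$; since $\mult(G,-1)=\dim\ker(A(G)+I)$ this gives $\mult(G,-1)=\mult(H,-1)+1$. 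Notably neither argument uses the cograph hypothesis.

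For (i) I would exploit the recursive structure of cographs: every cograph is a single vertex, a disjoint union $G_1\sqcup G_2$, or a join $G_1\vee G_2$ of smaller cographs (and induced subgraphs of cographs are cographs, so $H$ stays a cograph). I would prove by induction on $|V(G)|$ the strengthened statement that $A(G)$ has no eigenvalue in $(-1,0)$ and, moreover, the rational function $\phi_G(\lambda):=\mathbf{1}^{\top}(\lambda I-A(G))^{-1}\mathbf{1}$ satisfies $\phi_G(\lambda)<-1$ for every $\lambda\in(-1,0)$. The base case $K_1$ gives $\phi_{K_1}(\lambda)=1/\lambda<-1$ on $(-1,0)$. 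For a union the spectrum is the union of the two spectra (so nothing new enters $(-1,0)$) and $\phi_{G_1\sqcup G_2}=\phi_{G_1}+\phi_{G_2}<-2$. The join carries the work: writing an eigenvector as $(x_1,x_2)$ and setting $s_i=\mathbf{1}^{\top}x_i$, the eigen-equation forces either $\lambda\in\mathrm{spec}(A_1)\cup\mathrm{spec}(A_2)$ (excluded by induction) or $\phi_{G_1}(\lambda)\phi_{G_2}(\lambda)=1$; since induction gives $\phi_{G_1},\phi_{G_2}<-1$ their product exceeds $1$, so the latter is impossible and the join has no eigenvalue in $(-1,0)$. Solving the same linear system for $\phi_{G_1\vee G_2}$ yields the closed form $\phi_{G_1\vee G_2}=\frac{\phi_1+\phi_2+2\phi_1\phi_2}{1-\phi_1\phi_2}$ with $\phi_i=\phi_{G_i}$, and a one-line manipulation shows $\phi_{G_1\vee G_2}<-1$ is equivalent to $(\phi_1+1)(\phi_2+1)>0$, which holds because both factors are negative.

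The main obstacle is finding the correct induction hypothesis for (i): the bare statement ``no eigenvalue in $(-1,0)$'' does not propagate through the join, because the new eigenvalues of a join are governed by the secular equation $\phi_1\phi_2=1$, and ruling these out requires quantitative control of $\phi_{G_i}$ across the whole interval. The strengthening $\phi_G<-1$ is precisely what makes the ``no new eigenvalue'' step ($\phi_1\phi_2>1$) and the propagation step ($(\phi_1+1)(\phi_2+1)>0$) succeed simultaneously; identifying this self-reinforcing invariant, rather than the routine rational-function algebra that follows it, is the crux. The only remaining checks are the elementary rank lemma used for (ii)--(iii) and the cotree decomposition of cographs, both of which are standard.
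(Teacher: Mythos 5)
The paper does not prove this theorem at all --- it is quoted as a known result from \cite{Mohammadian_Trevisan_2016, Ghorbani_2019} --- so your proposal can only be judged on its own terms, and on those terms it is correct and complete. Parts $(ii)$ and $(iii)$ are exactly as routine as you say: $N(u)=N(v)$ forces $u\not\sim v$ so rows $u,v$ of $A(G)$ coincide, $N[u]=N[v]$ forces $u\sim v$ so rows $u,v$ of $A(G)+I$ coincide, and the equal-row/equal-column rank argument gives the nullity drop of exactly one; neither part uses the cograph hypothesis (for $(ii)$ this is consistent with the paper's Lemma~\ref{lemma:twin_inertia}, which holds for all graphs). For $(i)$, your induction along the cotree with the strengthened invariant $\phi_G(\lambda)=\mathbf{1}^{\top}(\lambda I-A(G))^{-1}\mathbf{1}<-1$ on $(-1,0)$ checks out: the dichotomy at a join (either $\lambda$ is an eigenvalue of a factor or $\phi_1\phi_2=1$) is the correct secular equation, $\phi_1\phi_2>1$ kills the new eigenvalues, the closed form $\phi_{G_1\vee G_2}=(\phi_1+\phi_2+2\phi_1\phi_2)/(1-\phi_1\phi_2)$ is right, and since $1-\phi_1\phi_2<0$ the condition $\phi_{G_1\vee G_2}<-1$ reduces to $(\phi_1+1)(\phi_2+1)>0$ as claimed. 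This resolvent-invariant route is genuinely different from the arguments in the cited sources (which proceed via a congruence/diagonalization algorithm on the cotree, respectively via eigenvalue interlacing and multiplicity bookkeeping); what your version buys is a short, self-contained analytic proof of the forbidden interval, at the cost of needing the a priori observation that the bare statement does not propagate through joins and must be strengthened --- which, as you note, is the actual crux.
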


\begin{theorem}\label{thm:cograph}
    Let $G$ be a cograph. Then 
    \[ n^+(G)\le n^-(G).\]    
\end{theorem}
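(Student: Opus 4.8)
The plan is to prove $n^+(G) \le n^-(G)$ for cographs by induction on the order $n$, exploiting the recursive structure guaranteed by the characterization stated just before the theorem: every nontrivial induced subgraph of a cograph has a pair of vertices sharing either the same open neighbourhood or the same closed neighbourhood. The base case is a single vertex (or the empty graph), where both quantities vanish. For the inductive step, I would pick such a twin pair $u,v$ in $G$ and delete one of them, say setting $H = G - u$, which is again a cograph, so the induction hypothesis gives $n^+(H) \le n^-(H)$.

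The heart of the argument is to track how $n^+$ and $n^-$ change under deletion of a twin. I would split into the two twin types. If $N(u) = N(v)$ (open twins), then by Lemma~\ref{lemma:twin_inertia} we have $n^+(G) = n^+(H)$ and $n^-(G) = n^-(H)$, so the inequality passes straight through. The more delicate case is $N[u] = N[v]$ (closed twins). Here I expect that deleting $u$ decreases $n^-$ by exactly one while leaving $n^+$ unchanged, which is precisely what preserves the inequality; indeed Lemma~\ref{lemma:twins_negative_inertia} already supplies $n^-(G) \ge n^-(G - \{u,v\}) + 1$, and Theorem~\ref{thm:cograph_eigenvalues}$(iii)$ pins down the multiplicity of $-1$ as $\mult(G,-1) = \mult(H,-1)+1$. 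Combining this eigenvalue-multiplicity bookkeeping with the fact (Theorem~\ref{thm:cograph_eigenvalues}$(i)$) that a cograph has no eigenvalues in $(-1,0)$ should let me conclude $n^+(G) = n^+(H)$ and $n^-(G) = n^-(H) + 1$, so that
\[
n^+(G) = n^+(H) \le n^-(H) = n^-(G) - 1 < n^-(G),
\]
closing the induction.

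The step I expect to be the main obstacle is the careful accounting in the closed-twin case: I need to be sure that adding a closed twin $u$ contributes its extra eigenvalue entirely to the $-1$ slot (hence to $n^-$) and contributes nothing to $n^+$. The absence of eigenvalues in $(-1,0)$ is what rules out the deleted vertex's interlacing perturbation leaking a new positive eigenvalue or a new zero eigenvalue; I would invoke Theorem~\ref{thm:cograph_eigenvalues}$(i)$ together with Cauchy interlacing to control $n^+(G)$ relative to $n^+(H)$. A clean alternative, should the interlacing bookkeeping prove fiddly, is to argue directly from parts $(i)$--$(iii)$ of Theorem~\ref{thm:cograph_eigenvalues}: since a cograph has integer-free spectrum only on the interval $(-1,0)$, every positive eigenvalue and every zero eigenvalue is accounted for among the open twins, while the closed twins feed exclusively the multiplicity of $-1$, and a short comparison of the three running tallies $n^+$, $n^0$, $n^-$ as the graph is built up from single vertices by twin-additions yields $n^+ \le n^-$ throughout.
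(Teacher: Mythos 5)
Your overall strategy --- induction on the order via the twin characterization of cographs, splitting into open-twin and closed-twin cases --- is exactly the paper's. The open-twin case is handled identically via Lemma~\ref{lemma:twin_inertia}. However, your closed-twin case contains a genuine error: the claim that deleting a closed twin $u$ leaves $n^+$ unchanged, i.e.\ that $n^+(G)=n^+(H)$ for $H=G-u$, is false. Take $G=K_2$ and $H=K_1$: the vertices of $K_2$ are closed twins, yet $n^+(K_2)=1>0=n^+(K_1)$ (the added eigenvalue pair is $\{1,-1\}$, and the zero eigenvalue of $K_1$ disappears). The same phenomenon occurs whenever attaching a closed twin turns a zero eigenvalue into a positive one, so your ``clean alternative'' --- that closed twins feed exclusively the multiplicity of $-1$ and contribute nothing to $n^+$ --- fails for the same reason, and your concluding strict inequality $n^+(G)<n^-(G)$ is already violated by $K_2$ itself, where equality holds.

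The repair is small and is what the paper does: you do not need $n^+(G)=n^+(H)$, only the Cauchy interlacing bound $n^+(G)\le n^+(H)+1$. What you \emph{can} pin down exactly is the negative inertia: interlacing gives that $G$ has at least as many eigenvalues below $-1$ as $H$, Theorem~\ref{thm:cograph_eigenvalues}$(iii)$ gives $\mult(G,-1)=\mult(H,-1)+1$, and Theorem~\ref{thm:cograph_eigenvalues}$(i)$ ensures there is nothing in $(-1,0)$, so $n^-(G)\ge n^-(H)+1$; the reverse inequality is again interlacing, whence $n^-(G)=n^-(H)+1$. The induction then closes via
\[
n^+(G)\;\le\; n^+(H)+1\;\le\; n^-(H)+1\;=\;n^-(G),
\]
with equality possible (as for complete graphs). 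Incidentally, Lemma~\ref{lemma:twins_negative_inertia}, which you cite, removes \emph{both} twins and only gives an inequality, so it is not the right tool here; the single-vertex deletion bookkeeping above is what is needed.
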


\begin{proof}
We proceed by induction on the order $n$. If $n=3$, then the assertion holds. Let $G$ be a cograph on $n\ge 4$ vertices. Then $G$ has a pair of vertices $u,v$ such that one of the following occurs:

   \textbf{Case 1:} $u$ and $v$ have the same open neighbourhood, i.e., $N(u)=N(v)$.
   
   Let $H=G-u$. By Lemma \ref{lemma:twin_inertia}, $n^+(H)=n^+(G)$ and $n^-(G)=n^-(H)$. The assertion holds by the induction hypothesis.

   \textbf{Case 2:}  $u$ and $v$ have the same closed neighbourhood, i.e., $N[u]=N[v]$. 
  
   By the Interlacing Theorem, if $H$ has $t$-many eigenvalues less than $-1$, then $G$ has at least $t$-many eigenvalues less than $-1$. Using Theorem \ref{thm:cograph_eigenvalues} $(i)$ and $(iii)$, we conclude that $n^-(G)=n^-(H)+1$.
 
   By the induction hypothesis, we have 
   \[ n^+(G)\le n^+(H)+1 \le n^-(H)+1 = n^-(G).\]
   The proof is complete.
\end{proof}

\section{Concluding remarks}
\label{section:conclusion}

The purpose of this article is to introduce and motivate Conjectures \ref{conj:inertia_main} and \ref{conj:line_graph}. We have made modest progress in proving these conjectures, but the evidence for them appears to be strong, and there are diverse graphs for which the conjectures are tight. We hope that this paper will encourage others to make further progress. Also, since these conjectures do not involve NP-hard parameters, they are well-suited to the use of AI tools to search for counterexamples. 

Given the apparent difficulty of proving Conjecture \ref{conj:inertia_main}, we believe the following weaker conjecture may be more tractable.

\begin{conjecture}
For any graph $G$ of order $n$, we have
\[
2n \le (n - n^+(G))(n - n^+(G) + 3).
\]
\end{conjecture}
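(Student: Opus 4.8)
The plan is to prove the equivalent bound $n \le \binom{k+2}{2} - 1$, where $k := n - n^+(G) = n^0(G) + n^-(G)$ counts the non-positive eigenvalues; this is precisely the explicit form of the $O(k^2)$ bound on the order, and it is implied by Conjecture \ref{conj:inertia_main}. Since $k$ is additive over connected components while $k_1(k_1+3) + k_2(k_2+3) \le (k_1+k_2)(k_1+k_2+3)$ (the difference being $2k_1k_2 \ge 0$), I would first reduce to connected graphs and then induct on $n$, checking small orders directly.

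The main engine would be two deletion moves, in the spirit of the proofs of Theorems \ref{thm:max_degree_4} and \ref{thm:2_connected}. Closed-neighbourhood deletion: for $u$ with $d = \deg(u)$ and $H = G - N[u]$, the edges meeting $N[u]$ split into $d$ stars, so Lemma \ref{lemma:inertia-interlacing} gives $n^+(H) \ge n^+(G) - d$, and since $|V(H)| = n - d - 1$ this forces $k(H) \le k - 1$; the induction hypothesis then yields $n \le (k-1)(k+2)/2 + d + 1$, which meets the target exactly when $d \le k$. Hence this step closes whenever $\delta(G) \le k$. Single-vertex deletion: for $H = G - v$ interlacing gives $n^+(H) \in \{n^+(G)-1, n^+(G)\}$, so $k(H) \in \{k-1,k\}$, and if some $v$ satisfies $n^+(G-v) = n^+(G)$ then $k(H) = k-1$ and induction gives the stronger $n \le \binom{k+1}{2}$. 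After these two moves the only survivors are dense graphs with $\delta(G) \ge k+1$ that are moreover positive-inertia-critical, meaning $n^+(G-v) = n^+(G)-1$ for every vertex $v$.

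For this remaining regime I would add the chromatic estimate coming from Theorem \ref{thm:inertia_fractional_chromatic}: from $n^+ \le (\chi(G)-1)n^-$ one gets $n = n^+ + n^0 + n^- \le \chi(G)\,k$, which beats the target whenever $\chi(G) \le (k+3)/2$. Putting the three criteria together, a minimal counterexample would have to satisfy $\delta(G) \ge k+1$, positive-inertia-criticality, and $\chi(G) > (k+3)/2$ simultaneously, which points squarely at highly structured graphs such as strongly regular graphs. The triangular graph $T(5)$, with spectrum $6^{(1)}, 1^{(4)}, (-2)^{(5)}$, is a concrete witness: here $k = 5$, $\delta = 6$, and $\chi = 5$, and a short computation gives $n^+(T(5)-v) = 4$ for every $v$, so all three reductions just fail even though $2n = 20 \le 40 = k(k+3)$ holds comfortably.

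The hard part will be exactly this dense, high-chromatic, positive-inertia-critical regime: the combinatorial deletion arguments each lose an additive constant or a constant factor there and cannot recover the sharp quadratic bound, even though it holds with room to spare. I expect that closing it requires the representation machinery behind Theorems \ref{thm:DGS_1977} and \ref{thm:Bell_Rowlinson_generalization}: project onto the $k$-dimensional non-positive eigenspace $W$ to obtain frame vectors $y_i = P_W e_i$ with $\sum_i y_i y_i^T = I_W$ and $A_W y_i = \sum_{j \sim i} y_j$, and try to show that the $n$ augmented tensors $(y_i y_i^T,\, y_i)$ are linearly independent in $\mathrm{Sym}^2(W) \oplus W$, an ambient space of dimension exactly $\binom{k+2}{2} - 1$. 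The obstruction is that, across several distinct non-positive eigenvalues, the Gram values $y_i^T y_j$ no longer form a two-distance set, so the Koornwinder--Delsarte--Goethals--Seidel independence argument does not apply directly; designing a replacement that exploits the $\{0,1\}$-structure of $A$ together with the relation $A_W y_i = \sum_{j\sim i} y_j$ is, in my view, the crux of the problem.
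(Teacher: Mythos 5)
The statement you are trying to prove is presented in the paper as an open conjecture; the paper contains no proof of it, so there is nothing to compare your argument against except the target itself. Your proposal, by your own account, is not a proof: after the three reduction steps you describe, an entire regime remains uncovered, and the machinery you suggest for it is left as a direction rather than an argument. That said, the reductions you do carry out are sound. Writing $k = n - n^+(G)$, the reformulation $n \le \binom{k+2}{2}-1$ is correct, as is the superadditivity of $k(k+3)$ over components. The closed-neighbourhood deletion correctly yields $n^+(G-N[u]) \ge n^+(G) - \deg(u)$ via Lemma \ref{lemma:inertia-interlacing} and hence $k(G-N[u]) \le k-1$, closing the induction exactly when $\delta(G) \le k$; the single-vertex step and the bound $n \le \chi(G)\,k$ from Theorem \ref{thm:inertia_fractional_chromatic} are also correct (modulo the trivial case $n^-(G)=0$). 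Your example $T(5)$ convincingly shows that these three criteria can all fail simultaneously, so the residual case is genuinely nonempty.

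The gap is therefore exactly where you locate it: graphs with $\delta(G) \ge k+1$, with $n^+(G-v) = n^+(G)-1$ for every $v$, and with $\chi(G) > (k+3)/2$. For this regime you propose projecting onto the $k$-dimensional non-positive eigenspace and proving linear independence of the $n$ tensors $(y_iy_i^T, y_i)$ in a space of dimension $\binom{k+2}{2}-1$, but you do not supply the independence argument, and you correctly note why the classical one fails: Theorem \ref{thm:Bell_Rowlinson_generalization} and the Delsarte--Goethals--Seidel method control a \emph{single} eigenvalue of high multiplicity (so the Gram values $y_i^Ty_j$ form a two-distance set), whereas here $W$ is spanned by eigenvectors of several distinct non-positive eigenvalues and no such rigidity is available. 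Until that independence (or a substitute) is established, the argument does not close, and the statement remains a conjecture. One smaller caution: even if the independence step were proved, you would need to handle the degenerate situations where some $y_i = 0$ or where two vertices have $y_i = y_j$ (twins relative to $W$), which is precisely the reason the analogous results in the literature are stated for reduced graphs.
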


The above conjecture is equivalent to Conjecture \ref{conj:inertia_main} when $n^0 = 0$. It also refines a question of Mohar \cite{Mohar_lecture} on upper-bounding the order of a graph by a function of its non-positive eigenvalues. Even the following weaker problem is of interest.

\begin{problem}\label{problem:polynomial_bound}
    Does there exist a polynomial $f(x)$ such that for every graph $G$,  
    \[n^+(G)\le f(n^-(G)).\]
\end{problem}

\section*{Acknowledgement}

The authors would like to thank Willem Haemers and Shengtong Zhang for helpful comments on Conjecture~\ref{conj:inertia_main}. The authors also thank anonymous referees for their careful reading and helpful comments.

\bibliographystyle{plainurl}
\bibliography{references}

\vspace{0.4cm}

\noindent Saieed Akbari, Email: {\tt s\_akbari@sharif.edu}\\ 
The research visit of S. Akbari at Simon Fraser University was supported in part by the ERC Synergy grant (European Union, ERC, KARST, project number 101071836).\\
\textsc{Department of Mathematical Sciences, Sharif University of Technology, Tehran, Iran}\\[1pt]

\noindent Clive Elphick, Email: {\tt clive.elphick@gmail.com}\\
\textsc{School of Mathematics, University of Birmingham, Birmingham, UK}\\[1pt]

\noindent Hitesh Kumar, Email: {\tt hitesh.kumar.math@gmail.com}, {\tt hitesh\_kumar@sfu.ca}\\
\textsc{Department of Mathematics, Simon Fraser University, Burnaby, BC, Canada}\\[1pt]

\noindent Shivaramakrishna Pragada,\\
Email: {\tt shivaramakrishna\_pragada@sfu.ca, shivaramkratos@gmail.com}\\
\textsc{Department of Mathematics, Simon Fraser University, Burnaby, BC, Canada}\\[1pt]

\noindent Quanyu Tang, Email: {\tt tang\_quanyu@163.com}\\
\textsc{School of Mathematics and Statistics, Xi'an Jiaotong University, Xi'an 710049, P. R. China}

\end{document}